\documentclass[a4, 12pt]{amsart}
\usepackage{amsthm,enumerate,graphicx,color}
\usepackage{amssymb}
\usepackage{amstext}
\usepackage{amsmath}
\usepackage{amscd}
\usepackage{latexsym}
\usepackage{amsfonts}
\theoremstyle{plain}
\newtheorem{thm}{Theorem}[section]
\newtheorem*{thm*}{Theorem}
\newtheorem*{cor*}{Corollary}

\newtheorem{prop}[thm]{Proposition}
\newtheorem{lem}[thm]{Lemma}
\newtheorem{cor}[thm]{Corollary}
\newtheorem{claim}{Claim}
\newtheorem*{claim*}{Claim}

\theoremstyle{definition}
\newtheorem{defn}[thm]{Definition}
\newtheorem{ex}[thm]{Example}
\newtheorem{rem}[thm]{Remark}

\theoremstyle{remark}

\numberwithin{equation}{thm}
\def\Hom{\mathrm{Hom}}

\def\GCD{\mathrm{GCD}}

\def\Coker{\mathrm{Coker}}

\def\Im{\mathrm{Im}}

\def\rank{\operatorname{rank}}

\def\e{\mathrm{e}}
\def\m{\mathfrak m}

\def\q{\mathfrak q}

\def\N{\mathbb{N}}
\def\Z{\mathbb{Z}}

\newcommand{\rma}{\mathrm{a}}

\newcommand{\rme}{\mathrm{e}}

\newcommand{\rmr}{\mathrm{r}}

\newcommand{\rmv}{\mathrm{v}}

\newcommand{\rmA}{\mathrm{A}}

\newcommand{\rmD}{\mathrm{D}}
\newcommand{\rmE}{\mathrm{E}}

\newcommand{\rmI}{\mathrm{I}}

\newcommand{\rmK}{\mathrm{K}}

\newcommand{\rmR}{\mathrm{R}}

\newcommand{\calC}{\mathcal{C}}

\newcommand{\calF}{\mathcal{F}}

\newcommand{\calM}{\mathcal{M}}

\newcommand{\calS}{\mathcal{S}}

\newcommand{\calX}{\mathcal{X}}
\newcommand{\calY}{\mathcal{Y}}

\newcommand{\fkm}{\mathfrak{m}}
\newcommand{\fkn}{\mathfrak{n}}

\newcommand{\fkp}{\mathfrak{p}}

\newcommand{\mapright}[1]{%
\smash{\mathop{%
\hbox to 1cm{\rightarrowfill}}\limits^{#1}}}

\newcommand{\mapleft}[1]{%
\smash{\mathop{%
\hbox to 1cm{\leftarrowfill}}\limits_{#1}}}

\newcommand{\mapdown}[1]{\Big\downarrow
\llap{$\vcenter{\hbox{$\scriptstyle#1\,$}}$ }}

\def\Syz{\mathrm{Syz}}

\def\gr{\operatorname{gr}}

\def\X{{\mathcal X}}

\def\ol{\overline}

\tolerance=9999

\oddsidemargin 0mm
\evensidemargin 0mm
\topmargin 0mm
\textwidth 160mm
\textheight 230mm

\begin{document}

\setlength{\baselineskip}{15pt}
\title{Ulrich ideals and modules}
\pagestyle{plain}
\author{Shiro Goto, Kazuho Ozeki, Ryo Takahashi, Kei-ichi Watanabe, Ken-ichi Yoshida}
\address{S. Goto: Department of Mathematics, School of Science and Technology, Meiji University, 1-1-1 Higashimita, Tama-ku, Kawasaki 214-8571, Japan}
\email{goto@math.meiji.ac.jp}
\address{K. Ozeki: Department of Mathematical Science, Faculty of Science, Yamaguchi University, 1677-1 Yoshida, Yamaguchi 853-8512, Japan}
\email{ozeki@yamaguchi-u.ac.jp}
\address{R. Takahashi: Graduate School of Mathematics, Nagoya University, Furocho, Chikusaku, Nagoya 464-8602, Japan/Department of Mathematics, University of Nebraska, Lincoln, NE 68588-0130, USA}
\email{takahashi@math.nagoya-u.ac.jp}
\urladdr{http://www.math.nagoya-u.ac.jp/~takahashi/}
\address{K.-i. Watanabe and K. Yoshida: Department of Mathematics, College of Humanities and Sciences, Nihon University, 3-25-40 Sakurajosui, Setagaya-Ku, Tokyo 156-8550, Japan}
\email{watanabe@math.chs.nihon-u.ac.jp}
\email{yoshida@math.chs.nihon-u.ac.jp}
\thanks{2010 {\em Mathematics Subject Classification.} 13A30, 13D02, 13H10, 13H15, 16G60}
\thanks{{\em Key words and phrases.} Ulrich ideal, Ulrich module, numerical semigroup ring, minimal free resolution, finite CM--representation type}
\thanks{This work was partially supported by JSPS Grant-in-Aid for Scientific Research (C) 20540050/22540047/22540054/23540059, JSPS Grant-in-Aid for Young Scientists (B) 22740008/22740026 and by JSPS Postdoctoral Fellowships for Research Abroad}
\begin{abstract}
In this paper we study Ulrich ideals of and Ulrich modules over Cohen--Macaulay local rings from various points of view. We determine the structure of minimal free resolutions of Ulrich modules and their associated graded modules, and classify Ulrich ideals of numerical semigroup rings and rings of finite CM-representation type.
\end{abstract}
\maketitle
\tableofcontents


\section{Introduction}\label{intro}

The purpose of this paper is to report the study of Ulrich ideals and modules with a generalized form. We shall explore their structure  and  establish, for given Cohen--Macaulay local rings, the ubiquity of these kinds of ideals and modules.

Ulrich modules with respect to maximal ideals in our sense, that is MGMCM (\underline{m}aximally \underline{g}enerated \underline{m}aximal \underline{C}ohen--\underline{M}acaulay) modules were introduced by \cite{U,BHU} and have been closely explored in connection to the representation theory of rings.
Our motivation has started, with a rather different view-point, from the naive question of why the theory of MGMCM modules works so well. We actually had an occasion \cite{G} to make a heavy use of it and wanted to know the reason.

To state the main results, let us begin with the definition of Ulrich ideals and modules. Throughout this paper, let $A$ be a Cohen--Macaulay local ring with  maximal ideal $\fkm$ and $d = \dim A \geq 0$. Let $I$ be an $\fkm$--primary ideal of $A$ and  let $$\operatorname{gr}_I(A) = \bigoplus_{n \ge 0}I^n/I^{n+1}$$ be the associated graded ring of $I$. For simplicity, we assume that $I$ contains a parameter ideal $Q = (a_1, a_2, \ldots, a_d)$ of $A$ as a reduction. Notice that this condition is automatically satisfied, if the residue class field $A/\fkm$ of $A$ is infinite, or if $A$ is analytically irreducible and $\dim A = 1$. Let $\rma (\operatorname{gr}_I(A))$ denote the $a$--invariant of $\operatorname{gr}_I(A)$ (\cite[Definition 3.1.4]{GW}).

\begin{defn}\label{1.1}
We say that $I$ is an Ulrich ideal of $A$ if it satisfies the following.
\begin{enumerate}
\item[$(1)$] $\operatorname{gr}_I(A)$ is a Cohen--Macaulay  ring with $\rma (\operatorname{gr}_I(A)) \le 1-d$.
\item[$(2)$] $I/I^2$ is a free $A/I$--module.
\end{enumerate}
\end{defn}
 
Condition (1) of Definition \ref{1.1} is equivalent to saying that $I^2 = QI$ (\cite[Remark 3.1.6]{GW}). Hence every parameter ideal is an Ulrich ideal.  When $I = \fkm$, Condition (2) is naturally satisfied and Condition (1) is equivalent to saying that the Cohen--Macaulay local ring $A$ possesses maximal embedding dimension in the sense of J. Sally \cite{S1}, namely the equality
$\rmv(A) = \rme(A) + d -1$
holds true, where $\rmv (A)$ (resp. $\rme (A)$) denotes the embedding dimension    of $A$, that is the minimal number $\mu_A(\fkm)$ of generators of the maximal ideal $\fkm$ (resp. the multiplicity $\rme_\fkm^0(A)$ of $A$ with respect to $\fkm$).

\begin{defn}\label{Umodule}
Let $M$ be a finitely generated $A$-module. Then we say that $M$ is an Ulrich $A$--module with respect to $I$, if the following conditions are satisfied. 
\begin{itemize}
\item[(1)] $M$ is a maximal Cohen--Macaulay $A$-module, that is $\operatorname{depth}_AM = d$.
(Hence the zero module is not maximal Cohen--Macaulay in our sense).
\item[(2)] $\e_I^0(M)=\ell_A(M/IM)$.
\item[(3)] $M/IM$ is $A/I$-free.
\end{itemize}
Here $\e_I^0(M)$ denotes the multiplicity of $M$ with respect to $I$ and $\ell_A(M/IM)$ denotes the length of the $A$--module $M/IM$.
\end{defn}

When $M$ is a maximal Cohen--Macaulay $A$--module, we have 
$$\rme_I^0(M) = \rme_Q^0(M) = \ell_A(M/QM) \ge \ell_A(M/IM),$$
so that Condition (2) of Definition \ref{Umodule} is equivalent to saying that $IM = QM$. Therefore, if $I = \fkm$, Condition (2) is equivalent to saying that $\rme_\fkm^0(M) = \mu_A(M)$, that is $M$ is maximally generated in the sense of \cite{BHU}. Similarly to the case of Ulrich ideals, every Ulrich $A$--module with respect to a parameter ideal is free.

Our purpose is to explore the structure of Ulrich ideals and modules in the above sense and investigate how many Ulrich ideals and modules exist over a given Cohen--Macaulay local ring $A$.

This paper consists of nine sections. In Sections \ref{ulid} and \ref{ulmd} we will summarize basic properties of Ulrich ideals and modules. Typical examples we keep in mind shall be given. Higher syzygy modules of Ulrich ideals are Ulrich modules (Theorem \ref{3.2}), which we will prove in Section \ref{ulmd}. Several results of this paper are proven by induction on $d = \dim A$. In Section \ref{uinsr} we shall closely explain the induction technique, which is due to and dates back to W. V. Vasconcelos \cite{V}.

The converse of Theorem \ref{3.2} is also true. We will show in Section \ref{reluium} that our ideal $I$ is Ulrich, once the higher syzygy modules of $I$ are Ulrich $A$--modules with respect to $I$ (Theorem \ref{4.1}). We will discuss in Section \ref{dual} the problem of when the canonical dual of Ulrich $A$--modules are again Ulrich.

It seems natural and interesting to ask if how many Ulrich ideals which are not parameter ideals are contained in a given Cohen--Macaulay local ring. The research about this question is still in progress and we have no definitive answer. In Section \ref{uinsr} we shall study the case where $A$ is a numerical semigroup ring over a field $k$, that is 
\[
A = k[[t^{a_1}, t^{a_2}, \ldots, t^{a_\ell}]] \ \ \subseteq k[[t]],
\]
where $0 < a_1, a_2, \ldots, a_\ell \in \mathbb{Z}$~$(\ell > 0)$ with 
$\operatorname{GCD}(a_1, a_2, \ldots, a_\ell) = 1$ and 
$k[[t]]$ denotes the formal power series ring. 
We also restrict our attention to the set ${\mathcal X}^g_A$ of Ulrich ideals $I$ of $A$ which are generated by monomials in $t$ but not parameter ideals, namely $\mu_A(I) > 1$. 
Then  ${\mathcal X}^g_A$ is a finite set (Theorem \ref{6.1}). 
We will show in Section \ref{uinsr} the following structure theorem of those Ulrich ideals also, 
when $A$ is a Gorenstein ring, that is the case where the semigroup 
\[
H = \langle a_1, a_2, \ldots, a_\ell \rangle = \left\{\sum_{i = 1}^\ell c_i a_i\,\Bigg|\,0 \le c_i \in \mathbb{Z} \right\}
\]
generated by the integers $a_i's$ is symmetric.
(Recall that a numerical semigroup $H$ is called symmetric provided that for all $i$ with $0\le i\le c-1$ one has $i\in H$ if and only if $c-i-1\notin H$, where $c=\max\{h\in\N\mid h-1\notin H\}$ is the conductor of $H$.)

\begin{thm}[\,=\,Theorem \ref{6.3}]
Suppose that $A$ is a Gorenstein ring and let $I$ be an ideal of $A$. Then the following conditions are equivalent.
\begin{enumerate}
\item[$(1)$] $I \in {\mathcal X}_A^g$.
\item[$(2)$] There exist elements $a, b \in H$ such that 
\begin{enumerate}
\item[$\mathrm{(i)}$] $a < b$ and $I = (t^a, t^b)$,
\item[$\mathrm{(ii)}$] $b-a\not\in H$ and $2(b-a)\in H$,
\item[$\mathrm{(iii)}$] the numerical semigroup $H_1 = H + \left<b-a\right>$ is symmetric, and
\item[$\mathrm{(iv)}$] $a = \min \{h \in H \mid (b-a)+ h \in H\}$.
\end{enumerate}
\end{enumerate}
\end{thm}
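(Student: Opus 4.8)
The plan is to use that $d=\dim A=1$ and to take $Q=(t^a)$ as minimal reduction, where $a$ is the least exponent of a monomial minimal generator of $I$. I will freely use that Condition (1) of Definition \ref{1.1} means $I^2=QI$, and that (for $d=1$) $I$ is an Ulrich ideal exactly when $I^2=QI$ and $I/Q$ is $A/I$-free (elementary, via the split sequence $0\to Q/I^2\to I/I^2\to I/Q\to 0$, in which $Q/I^2\cong A/I$ is generated by the image of the minimal generator $t^a$; then $\rank_{A/I}(I/I^2)=\rank_{A/I}(I/Q)+1=\mu_A(I)$). The device driving everything is: if $I^2=QI$ and $I$ is not a parameter ideal, then $A_1:=t^{-a}I$ satisfies $A\subsetneq A_1$ and $A_1^2=A_1$, so $A_1$ is a monomial birational overring, $A_1=k[[t^{H_1}]]$ with $H\subsetneq H_1$ a numerical semigroup; moreover $I=t^aA_1$, so $I\cong A_1$ and $I/Q=t^aA_1/t^aA\cong A_1/A$ as $A$-modules, while $\Ann_A(A_1/A)$ is the conductor $\fkc:=(A:_A A_1)$.

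To prove $(1)\Rightarrow(2)$ I would start from $I\in\X_A^g$. Then $I^2=QI$ and $I/Q\cong A_1/A$ is a faithful free $A/I$-module, so $\fkc=\Ann_A(A_1/A)=I=t^aA_1$. Since $A$ is Gorenstein, $\omega_A\cong A$, so (behaviour of canonical modules under the finite map $A\hookrightarrow A_1$) $\fkc=\Hom_A(A_1,A)\cong\omega_{A_1}$ as $A_1$-modules; with $\fkc=t^aA_1\cong A_1$ this gives $\omega_{A_1}\cong A_1$, i.e. $A_1$ is Gorenstein, i.e. $H_1$ is symmetric. Gorensteinness of $A$ also yields $\ell_A(A/\fkc)=\ell_A(A_1/A)$ (apply $\Hom_A(-,A)$ to $0\to A\to A_1\to A_1/A\to 0$, using $\Ext_A^{>0}(A_1,A)=0$ as $A_1$ is maximal Cohen--Macaulay, and $\Ext_A^1(A_1/A,A)\cong(A_1/A)^{\vee}$ for the finite-length module $A_1/A$). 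Hence $\ell_A(A/I)=\ell_A(A/\fkc)=\ell_A(A_1/A)=\ell_A(I/Q)$, and $A/I$-freeness of $I/Q$ forces $\rank_{A/I}(I/Q)=1$, so $\mu_A(I)=2$. Thus $I=(t^a,t^b)$ with $a<b$ in $H$, which is (i); $b-a\notin H$ because $I$ is not a parameter ideal, and $2(b-a)\in H$ because $A_1=A+At^{b-a}$ is a ring, which is (ii); $H_1=H\cup((b-a)+H)=H+\langle b-a\rangle$, symmetric by the above, which is (iii); and the semigroup ideal of $\fkc$ is $S:=\{h\in H\mid h+(b-a)\in H\}$ while that of $I$ is $(a+H)\cup(b+H)$ (least element $a$), so $\fkc=I$ gives $a=\min S$, which is (iv).

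For $(2)\Rightarrow(1)$ I would run this in reverse: given (i)--(iv), put $A_1:=A+At^{b-a}=k[[t^{H_1}]]$ with $H_1=H+\langle b-a\rangle$ (a numerical semigroup by (ii)). Then $I=(t^a,t^b)=t^aA_1$ and $Q=(t^a)$, so $I^2=t^{2a}A_1=QI$ (Condition (1)) and $\mu_A(I)=2$ (as $b-a\notin H$); it remains to see that $I/Q\cong A_1/A$ is $A/I$-free of rank $1$, i.e. $\fkc=I$. By (iii), $A_1$ is Gorenstein, so $\omega_{A_1}\cong A_1$; since $A$ is Gorenstein, $\fkc\cong\omega_{A_1}\cong A_1$ as $A_1$-modules. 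A monomial ideal of $A_1$ that is $A_1$-free of rank $1$ is principal on its lowest-order monomial, so $\fkc=t^eA_1$ with $e=\min\{h\in H\mid t^hA_1\subseteq A\}=\min S$, and by (iv) $e=a$; hence $\fkc=t^aA_1=I$. So $I/Q\cong A/I$, $I$ is Ulrich, and $I\in\X_A^g$ since $\mu_A(I)=2$.

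The step I expect to be the real obstacle is converting the freeness condition on $I/I^2$ into the symmetry of $H_1$: the point is that modulo $Q$ the ideal $I$ is literally the overring $A_1$, so ``$I/Q$ is $A/I$-free'' says ``the conductor $(A:_A A_1)$ equals $I$'', and Gorenstein duality over $A$ then splits this single condition into ``$A_1$ is Gorenstein'' (which is (iii)) plus a normalization of the generator of $\fkc$ (which is (iv)); as a bonus, the same duality, through $\ell_A(A_1/A)=\ell_A(A/\fkc)$, forces $\mu_A(I)=2$ automatically, so no separate bound on $\mu_A(I)$ is needed. What remains is routine bookkeeping: the explicit semigroup ideals of $I$ and of $\fkc$, the identity $\ell_A(A/t^nA)=n$ for $n\in H$, the idempotency computation $A_1^2=A_1$, and the check that $H+\langle b-a\rangle=H\cup((b-a)+H)$ is a numerical semigroup.
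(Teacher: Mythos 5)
Your proof is correct and rests on the same key device as the paper's: the birational overring $B=A[t^{b-a}]$ (your $A_1$), the factorisation $I=t^aB$ giving $I/Q\cong B/A$, and the Herzog--Kunz isomorphism $\omega_B\cong A:_AB$. Two sub-steps are handled differently. For $\mu_A(I)=2$ in $(1)\Rightarrow(2)$, the paper simply invokes Corollary \ref{2.6} (Ulrich ideals of a Gorenstein ring are exactly good ideals with $d+1$ generators), whereas you deduce it from the Gorenstein length identity $\ell_A(A/\mathfrak{c})=\ell_A(B/A)$ combined with freeness of $I/Q$ --- a self-contained one-dimensional argument that bypasses the type computation behind Corollary \ref{2.6}. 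For $(2)\Rightarrow(1)$, the paper shows $J:=A:B=Q:I$ is a principal $B$-ideal $fB$ and then uses a reduction argument ($Q$ is a reduction of $J$) to force $fB=t^aB$; you instead observe directly that the conductor, being $B$-free of rank one and monomial, equals $t^eB$ with $e=\min\{h\in H\mid h+(b-a)\in H\}$, and condition (iv) normalises $e=a$. Both routes are valid; yours is a little shorter in the converse direction, while the paper's use of Corollary \ref{2.6} is what generalises cleanly to higher dimension.
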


As a consequence, we show that for given integers $1 < a < b$ with $\operatorname{GCD}(a,b)= 1$, the numerical semigroup ring $A = k[[t^a,t^b]]$ contains at least one Ulrich ideal generated by monomials in $t$ that is not a parameter ideal if and only if $a$ or $b$ is even (Theorem \ref{H=(a,b)}).

Section \ref{minfree} is devoted to the analysis of minimal free resolutions of Ulrich 
ideals. Let $I$ be an Ulrich ideal of a $d$-dimensional Cohen--Macaulay local ring $A$ and put $n = \mu_A(I)$. Let
$$\mathbb{F}_{\bullet} : \cdots \to F_i \overset{\partial_i}{\to} F_{i-1} \to \cdots \to F_1 \overset{\partial_1}{\to} F_0 = A \overset{\varepsilon}{\to} A/I \to 0$$
be a minimal free resolution of the $A$--module $A/I$ and put $\beta_i = \operatorname{rank}_AF_i$.  We then have the following.

\begin{thm}[\,=\,Theorem \ref{7.1}]\label{betti}
One has $A/I \otimes_A \partial_i=0$ for all $i \geq 1$, and
$$
\beta_i= \left\{
\begin{array}{ll}
(n-d)^{i-d}{\cdot}(n-d+1)^d & (d \le i),\\
\binom{d}{i}+(n-d){\cdot}\beta_{i-1} & (1 \leq i \leq d),\\
1 & (i=0).
\end{array}
\right.
$$
Hence $\beta_i=\binom{d}{i}+(n-d){\cdot}\beta_{i-1}$ for all $i \geq 1$.
\end{thm}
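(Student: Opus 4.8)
I would prove this by induction on $d=\dim A$, building the minimal $A$-free resolution of $A/I$ from the one over a residue ring via a change-of-rings construction, while keeping control of the ideal generated by the entries of the differentials. For the base case $d=0$ we have $Q=(0)$, so condition $(1)$ of Definition~\ref{1.1} says $I^2=0$ and condition $(2)$ says $I=I/I^2$ is $A/I$-free of rank $n=\mu_A(I)$. One then checks inductively that $\Omega^i_A(A/I)=IF_{i-1}$ for every $i\ge 1$: each such syzygy is $I$ times a free module, hence (using $I^2=0$) a free $A/I$-module, so a minimal cover $F_i\twoheadrightarrow\Omega^i_A(A/I)$ becomes an isomorphism modulo $I$ and therefore $\Omega^{i+1}_A(A/I)=\ker(F_i\to\Omega^i_A(A/I))=IF_i$. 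This gives at once $A/I\otimes_A\partial_i=0$ and $\beta_i=n\beta_{i-1}$, i.e. $\beta_i=n^i$, matching the formula for $d=0$.

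For the inductive step $d\ge 1$, I would use the reduction technique of Section~\ref{uinsr} to choose $a:=a_1\in Q$ that is a nonzerodivisor of $A$ and part of a minimal basis of $I$, so that $\overline A:=A/(a)$ is Cohen--Macaulay of dimension $d-1$ and $\overline I:=I/(a)$ is an Ulrich ideal of $\overline A$ with $\mu_{\overline A}(\overline I)=n-1$ and $\overline A/\overline I=A/I$. Let $\overline{\mathbb F}_\bullet$ be the minimal $\overline A$-free resolution of $A/I$; by induction $A/I\otimes_{\overline A}\overline\partial_i=0$ and $\overline\beta_i=\binom{d-1}{i}+(n-d)\overline\beta_{i-1}$ with $\overline\beta_0=1$ (note $(n-1)-(d-1)=n-d$). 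Lift each $\overline\partial_i$ to a map $d_i$ between free $A$-modules of the same rank whose matrix has entries in $I$; then $d_{i-1}d_i\equiv 0\pmod a$, so $d_{i-1}d_i=a\,\sigma_i$ for a uniquely determined $\sigma_i$, and the key point is that $\sigma_i$ again has entries in $I$. Here the Ulrich hypothesis $I^2=QI$ is used: an entry $ac'$ of $d_{i-1}d_i$ lies in $I^2=aI+(a_2,\dots,a_d)I$, say $ac'=a\iota+w$ with $\iota\in I$, $w\in(a_2,\dots,a_d)$; since $a$ is a nonzerodivisor modulo $(a_2,\dots,a_d)$ (as $A$ is Cohen--Macaulay), $c'-\iota\in(a_2,\dots,a_d)\subseteq I$, hence $c'\in I$.

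With this in hand, I would assemble the minimal $A$-free resolution of $A/I$ from $\overline{\mathbb F}_\bullet$ by the standard change-of-rings construction, which uses the resolution $0\to A\xrightarrow{a}A\to\overline A\to 0$ of $\overline A$: its $i$-th term is $A^{\overline\beta_i}\oplus A^{\overline\beta_{i-1}}$ and its $i$-th differential has block form $\left(\begin{smallmatrix}d_i&\pm a\\ \sigma_i&\pm d_{i-1}\end{smallmatrix}\right)$, every entry of which lies in $I$. Thus the resolution is minimal and $A/I\otimes_A\partial_i=0$, and $\beta_i=\overline\beta_i+\overline\beta_{i-1}$. Combining this with the inductive formula for $\overline\beta_i$ and Pascal's identity yields $\beta_i=\binom{d}{i}+(n-d)\beta_{i-1}$, $\beta_0=1$, for all $d$; solving this recursion gives $\beta_i=\sum_{j=0}^{\min(i,d)}\binom{d}{j}(n-d)^{i-j}$, which for $i\ge d$ equals $(n-d)^{i-d}\sum_{j=0}^{d}\binom{d}{j}(n-d)^{d-j}=(n-d)^{i-d}(n-d+1)^d$.

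The step I expect to be the main obstacle is the claim that the operators $\sigma_i$ have entries in $I$ (equivalently, that the Eisenbud operator of the deformation $A\to A/(a)$ vanishes on $\operatorname{Tor}^{\overline A}_\bullet(A/I,A/I)$). This is exactly what forces the change-of-rings construction to produce a \emph{minimal} resolution even when $a\in\fkm^2$ --- which genuinely occurs, e.g. for $I=(t^8,t^{10})\subseteq\fkm^2$ in $A=k[[t^4,t^5]]$ --- so one cannot avoid it by a naive choice of $a$; it is also the only place where $I^2=QI$ is used in an essential way, after which the vanishing $A/I\otimes_A\partial_i=0$ and the Betti recursion both fall out of the displayed block structure.
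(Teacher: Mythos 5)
Your argument is correct, and it does something genuinely different from the paper's proof. The paper also inducts on $d$, but the engine is Lemma~\ref{3.4}: reducing a minimal $A$-free resolution $\mathbb{F}_\bullet$ of $A/I$ modulo $a$ gives a minimal $\ol A$-free resolution of $I/aI$, and Vasconcelos's splitting $I/aI\cong A/I\oplus\ol I$ then yields the decomposition $\Syz_A^i(A/I)/a\Syz_A^i(A/I)\cong\Syz_{\ol A}^{i-1}(A/I)\oplus\Syz_{\ol A}^{i}(A/I)$, from which both $\beta_i=\ol\beta_{i-1}+\ol\beta_i$ and $A/I\otimes_A\partial_i=0$ follow by induction. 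You go the other way: you \emph{build} the minimal $A$-resolution from the $\ol A$-resolution via the standard change-of-rings construction for the codimension-one deformation $A\to\ol A$, with $i$-th term $A^{\ol\beta_i}\oplus A^{\ol\beta_{i-1}}$ and block differential involving a lift $d_i$ of $\ol\partial_i$ and an Eisenbud operator $\sigma_i$ defined by $d_{i-1}d_i=a\,\sigma_i$. The real content in your approach -- and the one place the Ulrich hypothesis $I^2=QI$ enters -- is the verification that $\sigma_i$ has entries in $I$ (you use that $I^2=aI+(a_2,\dots,a_d)I$ and that $a$ is a nonzerodivisor modulo $(a_2,\dots,a_d)$), which is exactly what makes the assembled complex minimal. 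Two small things worth noting but which cause no gap: you should observe that $\sigma_{i-1}d_i=d_{i-2}\sigma_i$ (immediate from $a$ being a nonzerodivisor), since that is what makes the block maps into a complex; and exactness of the assembled complex needs the short diagram chase using exactness of $\ol{\mathbb F}_\bullet$ and again that $a$ is a nonzerodivisor. What your route buys, beyond recovering the same Betti recursion and the vanishing $A/I\otimes_A\partial_i=0$, is an explicit description of the differentials in terms of the lifting data $(d_i,\sigma_i)$; what the paper's route buys is brevity, since the decomposition of $\Syz_A^i$ is already packaged in Lemma~\ref{3.4} and there is nothing to verify about exactness or minimality of a newly built complex.
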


What Theorem \ref{betti} says is that, thanks to the exact sequence $0 \to Q \to I \to (A/I)^{n-d} \to 0$, a minimal free resolution of the Ulrich ideal $I$ is isomorphic to the resolution induced from  the direct sum of $n-d$ copies of $\mathbb{F}_{\bullet}$ and the minimal free resolution of $Q = (a_1, a_2, \ldots, a_d)$, that is the truncation 
$${\mathbb{L}}_{\bullet} : 0 \to K_d \to K_{d-1} \to \cdots \to K_1 \to Q \to 0$$
of the Koszul complex ${\mathbb{K}}_{\bullet}(a_1, a_2, \ldots, a_d; A)$ generated by the $A$--regular sequence $a_1, a_2, \ldots, a_d$. As consequences, we get that $\mathbb{F}_\bullet$ is eventually periodic, if $A$ is a Gorenstein ring and that for Ulrich ideals $I$ and $J$ which are not parameter ideals, one has $I = J$, once 
$$\operatorname{Syz}_A^i(A/I) \cong \operatorname{Syz}_A^i(A/J)$$
for some $i \ge 0$, where 
$\operatorname{Syz}_A^i(A/I)$ and $\operatorname{Syz}_A^i(A/J)$ denote the $i$-th syzygy modules of $A/I$ and $A/J$ in their minimal free resolutions, respectively. The latter result eventually yields the following, which we shall prove in Section \ref{minfree}.
Recall here that a Cohen--Macaulay local ring is said to be of finite CM--representation type if there exist only finitely many nonisomorphic indecomposable maximal Cohen--Macaulay modules.

\begin{thm}[\,=\,Therem \ref{7.7}]
If $A$ is a Cohen--Macaulay local ring of finite CM--representation type, then $A$ contains only finitely many Ulrich ideals which are not parameters.
\end{thm}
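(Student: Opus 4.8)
The plan is to attach to each non-parameter Ulrich ideal $I$ a maximal Cohen--Macaulay module, bound the number of generators of $I$ by a constant depending only on $A$, and then invoke the injectivity statement deduced above from Theorem \ref{7.1} (that distinct non-parameter Ulrich ideals have non-isomorphic $i$-th syzygy modules). First I would fix representatives $N_1,\dots,N_r$ of the isomorphism classes of indecomposable maximal Cohen--Macaulay $A$-modules --- finite in number by hypothesis --- and set $B=\max_{1\le j\le r}\mu_A(\operatorname{Syz}_A^1(N_j))$. For a non-parameter Ulrich ideal $I$, put $n=\mu_A(I)$, so that $n>d$, and consider $W=\operatorname{Syz}_A^d(A/I)$. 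This module is maximal Cohen--Macaulay (in fact an Ulrich module with respect to $I$ by Theorem \ref{3.2}), so it decomposes as $W\cong\bigoplus_{k=1}^{m}N_{j_k}$ with each $N_{j_k}$ drawn from the list above.

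The crucial input is the shape of the Betti numbers in Theorem \ref{7.1}: one has $\mu_A(W)=\beta_d=(n-d+1)^d$ and $\mu_A(\operatorname{Syz}_A^{d+1}(A/I))=\beta_{d+1}=(n-d)\,\beta_d$. Since a finite direct sum of minimal free covers is again a minimal free cover, $\operatorname{Syz}_A^{d+1}(A/I)=\operatorname{Syz}_A^1(W)\cong\bigoplus_{k=1}^{m}\operatorname{Syz}_A^1(N_{j_k})$, and since each $N_{j_k}\neq0$ we also have $m\le\sum_{k=1}^{m}\mu_A(N_{j_k})=\mu_A(W)=\beta_d$. Combining these,
\[
(n-d)\,\beta_d=\beta_{d+1}=\sum_{k=1}^{m}\mu_A\bigl(\operatorname{Syz}_A^1(N_{j_k})\bigr)\le mB\le\beta_d\,B,
\]
and dividing through by $\beta_d=(n-d+1)^d>0$ yields $n-d\le B$. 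Hence $\mu_A(I)\le B+d$ for \emph{every} non-parameter Ulrich ideal $I$ of $A$.

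With $n=\mu_A(I)$ bounded by $B+d$, the module $W=\operatorname{Syz}_A^d(A/I)$ satisfies $\mu_A(W)=(n-d+1)^d\le(B+1)^d$, a bound independent of $I$; so $W$ is a direct sum of at most $(B+1)^d$ modules taken from the finite list $N_1,\dots,N_r$ and therefore belongs to a finite set of isomorphism classes. Because the assignment $I\mapsto\operatorname{Syz}_A^d(A/I)$ is injective on the set of non-parameter Ulrich ideals (the consequence of Theorem \ref{7.1} recalled above, applied with $i=d$), it follows that $A$ has only finitely many Ulrich ideals that are not parameters.

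I expect the only substantive step to be the uniform bound $\mu_A(I)\le B+d$; the remainder is bookkeeping. That bound genuinely uses both pieces of information available to us: the explicit multiplicative growth $\beta_{i+1}=(n-d)\beta_i$ ($i\ge d$) of the Betti sequence of $A/I$ from Theorem \ref{7.1}, and the elementary fact that over a ring of finite CM-representation type the first syzygies of the finitely many indecomposable maximal Cohen--Macaulay modules need a uniformly bounded number of generators. The degenerate case $d=0$ --- where the maximal Cohen--Macaulay condition is vacuous, $W=\operatorname{Syz}_A^0(A/I)=A/I$ is cyclic hence indecomposable, and the injectivity statement is used with $i=0$ --- is covered by the identical argument.
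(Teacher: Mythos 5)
Your proposal is correct, and it follows the same overall architecture as the paper's proof --- bound $\mu_A(I)$ uniformly over non-parameter Ulrich ideals $I$, deduce that $\mu_A(\Syz_A^d(A/I))=(\mu_A(I)-d+1)^d$ is uniformly bounded, conclude that the modules $\Syz_A^d(A/I)$ lie in a finite set of isomorphism classes by finite CM-representation type, and finish with the injectivity of $I\mapsto\Syz_A^d(A/I)$ from Corollary \ref{7.6} --- but obtains the crucial uniform bound on $\mu_A(I)$ by a genuinely different argument. The paper simply cites Corollary \ref{2.6}(a): since $I/Q\cong(A/I)^{n-d}$ sits inside $A/Q$, a socle comparison gives $(n-d)\cdot\rmr(A/I)\le\rmr(A)$, hence $n\le\rmr(A)+d$, a bound that uses neither Theorem \ref{7.1} nor the CM-finiteness hypothesis. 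You instead set $B=\max_j\mu_A(\Syz_A^1(N_j))$ over the finitely many indecomposable maximal Cohen--Macaulay modules $N_j$, decompose $W=\Syz_A^d(A/I)$ into $m$ indecomposables, and play the multiplicative relation $\beta_{d+1}=(n-d)\beta_d$ from Theorem \ref{7.1} against $\beta_{d+1}=\sum_k\mu_A(\Syz_A^1(N_{j_k}))\le mB\le\beta_d B$ to extract $n-d\le B$. Both routes are valid; the paper's is shorter and more intrinsic (the bound $\rmr(A)+d$ depends only on the Cohen--Macaulay type of $A$ and needs no representation-theoretic input), while yours is a pleasant observation that the Betti growth in Theorem \ref{7.1} together with finite CM-representation type already forces the bound on its own. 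Your bookkeeping --- $m\le\beta_d$ because each summand is nonzero, $\Syz_A^1(W)$ being the direct sum of the $\Syz_A^1(N_{j_k})$ because a direct sum of minimal covers is a minimal cover, and the $d=0$ case where $\Syz_A^0(A/I)=A/I$ is a cyclic and hence indecomposable module over the local ring --- is also correct, so I see no gap.
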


In Section \ref{linear} we study the linearity of a minimal free resolution of an associated graded module of an Ulrich module.
We prove the following result.

\begin{thm}[\,=\,Theorem \ref{9.5}]
Let $A$ be a Cohen--Macaulay local ring.
Let $I$ be an Ulrich ideal of $A$ and $M$ an Ulrich $A$-module with respect to $I$.
Then the associated graded module $\gr_I(M)$ of $M$ has a minimal free resolution
$$
\cdots \to \bigoplus^{r_i}\gr_I(A)(-i) \to \cdots \to \bigoplus^{r_1}\gr_I(A)(-1) \to \bigoplus^{r_0}\gr_I(A) \to \gr_I(M) \to 0
$$
as a graded $\gr_I(A)$-module, where $r_i$ is the $i$-th Betti number of $M$ for $i \geq 0$.
\end{thm}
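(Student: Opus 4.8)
The plan is to pass to the associated graded ring and kill the initial forms of a minimal reduction. Put $R=\gr_I(A)$, write $Q=(a_1,\dots,a_d)$, $\overline{A}=A/Q$, $\overline{I}=I/Q$, and let $a_i^{*}\in R_1=I/I^{2}$ be the initial form of $a_i$. First I would verify that $a_1^{*},\dots,a_d^{*}$ is a regular sequence both on $R$ and on $\gr_I(M)$. By the Valabrega--Valla criterion this reduces to: $a_1,\dots,a_d$ is $A$-regular and $M$-regular (immediate, since $Q$ is a parameter ideal while $A$ and $M$ are maximal Cohen--Macaulay), together with $Q\cap I^{n}=QI^{n-1}$ and $QM\cap I^{n}M=QI^{n-1}M$ for all $n\ge1$; both of the latter are forced by $I^{2}=QI$ (Ulrich condition $(1)$, cf.\ \cite[Remark 3.1.6]{GW}) and $IM=QM$ (condition $(2)$ of Definition \ref{Umodule}), since these give $I^{n}=QI^{n-1}\subseteq Q$ and $I^{n}M=QI^{n-1}M\subseteq QM$ for all $n\ge1$. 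Standard form-ring theory then yields $\overline{R}:=R/(a_1^{*},\dots,a_d^{*})R\cong\gr_{\overline{I}}(\overline{A})$ and $\gr_I(M)/(a_1^{*},\dots,a_d^{*})\gr_I(M)\cong\gr_{\overline{I}}(M/QM)$; and, the $a_i^{*}$ forming a regular sequence, a minimal graded free resolution of $\gr_I(M)$ over $R$ is obtained by lifting one of $\gr_{\overline{I}}(M/QM)$ over $\overline{R}$, so that the graded Betti numbers agree and linearity over $R$ is equivalent to linearity over $\overline{R}$.

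Next I would compute over $\overline{R}$. From $I^{2}=QI$ we get $\overline{I}^{\,2}=(I^2+Q)/Q=0$, so $\overline{R}=(A/I)\oplus\overline{I}$ with $\overline{I}$ placed in degree $1$ and square zero; from $IM=QM$ we get $\overline{I}\,(M/QM)=IM/QM=0$, so $\gr_{\overline{I}}(M/QM)=M/IM$ sits entirely in degree $0$ with $\overline{R}_{1}$ acting as zero. The exact sequence $0\to Q\to I\to(A/I)^{\,n-d}\to0$ (as in Theorem \ref{betti}) identifies $\overline{I}=I/Q\cong(A/I)^{\,n-d}$ as an $A/I$-module; thus $\overline{R}$ is the trivial square-zero extension $(A/I)\ltimes(A/I)^{\,n-d}$, and $M/IM\cong(A/I)^{\,\mu_A(M)}$ as an $A/I$-module. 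A direct calculation now gives the minimal graded free resolution of $A/I$ over $\overline{R}$: its first syzygy is $\overline{R}_{\ge1}=\overline{I}\cong(A/I)^{\,n-d}$, generated in degree $1$, and because $\overline{R}_{1}$ annihilates $\overline{I}$ the first syzygy of $(A/I)^{\,n-d}$ over $\overline{R}$ is again $\overline{I}^{\,n-d}\cong(A/I)^{(n-d)^{2}}$, generated in degree $2$; iterating, $\Syz^{i}_{\overline{R}}(A/I)\cong(A/I)^{(n-d)^{i}}$ is generated in degree $i$. Hence this resolution is linear with $\beta_i^{\overline{R}}(A/I)=(n-d)^{i}$, and, taking $\mu_A(M)$ copies, the minimal graded free resolution of $\gr_{\overline{I}}(M/QM)$ over $\overline{R}$ is linear with $i$-th Betti number $\mu_A(M)(n-d)^{i}$. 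Lifting along $a_1^{*},\dots,a_d^{*}$, the resolution of $\gr_I(M)$ over $R$ has the asserted shape $\cdots\to\bigoplus^{r_i}R(-i)\to\cdots$ with $r_i=\mu_A(M)(n-d)^{i}$.

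It remains to identify $r_i$ with the $i$-th Betti number of $M$ over $A$, which I would do by running exactly the same reduction one level down, over $A$ itself. Since $a_1,\dots,a_d$ is both $A$-regular and $M$-regular, $\beta_i^{A}(M)=\beta_i^{A/Q}(M/QM)$; and $M/QM=M/IM\cong(A/I)^{\,\mu_A(M)}$ over $A/Q$, while $\Syz^{1}_{A/Q}(A/I)=I/Q\cong(A/I)^{\,n-d}$, so the very same iteration gives $\beta_i^{A/Q}(A/I)=(n-d)^{i}$ and hence $\beta_i^{A}(M)=\mu_A(M)(n-d)^{i}=r_i$. The step demanding the most care is the reduction in the first paragraph: verifying that the initial forms $a_1^{*},\dots,a_d^{*}$ form a regular sequence on $R$ and on $\gr_I(M)$ and correctly recognising the quotient ring and module; once that is in place, everything is governed by the elementary homological algebra of the square-zero extension $(A/I)\ltimes(A/I)^{\,n-d}$, and no further serious computation is needed.
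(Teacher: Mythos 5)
Your proposal is correct, and it takes a genuinely different path from the paper's own argument. Both proofs hinge on the Valabrega--Valla criterion to see that the initial forms of a minimal reduction give a regular sequence on $\gr_I(A)$ and on $\gr_I(M)$ (the paper records this in Lemma \ref{9.11}), but from there the routes diverge. The paper proceeds by induction on $d$, killing one initial form $a_1^{*}$ at a time, and couples this with an elaborate framework of $I$-filtered modules and \emph{strict} maps (Lemmas \ref{9.2}--\ref{9.7} and the pivotal Theorem \ref{9.8}); this has the benefit of simultaneously proving that the differentials in a minimal free resolution of $M$ over $A$ are strict, which is how the paper identifies the graded Betti numbers $r_i$ with $\beta_i^A(M)$ and also yields the extra assertion $\rmI_1(\partial_i)+Q=I$. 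You instead collapse the induction by killing all $d$ initial forms at once, reduce to the square-zero extension $\overline{R}\cong(A/I)\ltimes(A/I)^{\,n-d}$, compute the (manifestly linear) resolution of $M/IM$ there by a one-line iteration, and transfer linearity and Betti numbers back to $\gr_I(A)$ via the standard fact that killing a regular sequence on the module and the ring preserves a minimal resolution together with its twists. You then establish $r_i=\beta_i^A(M)$ by running the identical reduction one level down over $A/Q$ and comparing the two formulas $\mu_A(M)(n-d)^i$, rather than by relating the two resolutions structurally. The net effect: your argument is shorter and more self-contained for the stated conclusion, while the paper's approach is heavier machinery that buys the additional strictness statements in the full Theorem \ref{9.5}. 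Two small presentational cautions: the phrase ``lifting'' a resolution from $\overline{R}$ to $R$ should really be ``a minimal resolution over $R$ descends to a minimal resolution over $\overline{R}$, which is then forced to be the linear one,'' since lifting in the other direction is not what you use; and it is worth saying explicitly that $I/Q\cong(A/I)^{n-d}$ is exactly Lemma \ref{2.3}, so that the reader sees where the Ulrich hypothesis enters.
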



In the final Section \ref{onefcm} we determine all the Ulrich ideals of $A$ when $A$ is a $1$-dimensional Gorenstein local ring of finite CM-representation type.
We prove the following theorem by using techniques from the representation theory of maximal Cohen--Macaulay modules.
In the forthcoming paper \cite{GOTWY}, we will prove a similar result for $2$-dimensional 
Gorenstein rational singularities (Gorenstein local rings of finite CM-representation type). 

\begin{thm}[\,=\,Theorem \ref{8.9}]
Let $A$ be a $1$-dimensional Gorenstein complete equicharacteristic local ring with algebraically closed residue field $k$ of characteristic zero.
Suppose that $A$ has finite CM-representation type.
Then $A$ is a simple ADE-singularity $k[[x,y]]/(f)$, and the set of Ulrich ideals of $A$ is as follows:
\begin{enumerate}
\item[$(\rmA_n)$\ ]
$\begin{cases}
\{(x,y),(x,y^2),\dots,(x,y^{\frac{n}{2}})\} & \text{if $n$ is even},\\
\{(x,y),(x,y^2),\dots,(x,y^{\frac{n-1}{2}}),(x,y^{\frac{n+1}{2}})\} & \text{if $n$ is odd}.
\end{cases}$
\item[$(\rmD_n)$\ ]
$\begin{cases}
\{(x^2,y),(x+\sqrt{-1}y^\frac{n-2}{2},y^\frac{n}{2}),(x-\sqrt{-1}y^\frac{n-2}{2},y^\frac{n}{2})\} & \text{if $n$ is even},\\
\{(x^2,y),(x,y^\frac{n-1}{2})\} & \text{if $n$ is odd}.
\end{cases}$
\item[$(\rmE_6)$\ ]
$\{(x,y^2)\}$.
\item[$(\rmE_7)$\ ]
$\{(x,y^3)\}$.
\item[$(\rmE_8)$\ ]
$\emptyset$.
\end{enumerate}
\end{thm}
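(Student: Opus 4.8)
The plan is to combine the known classification of one-dimensional Gorenstein rings of finite CM-representation type with the rigidity of Ulrich ideals obtained in Section~\ref{minfree}, thereby reducing the statement to a bounded, completely explicit computation for each of the ADE normal forms.

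First I would invoke the classical classification (Greuel--Kn\"orrer and Buchweitz--Greuel--Schreyer; see also Yoshino's book on Cohen--Macaulay modules over one-dimensional rings): a complete equicharacteristic one-dimensional Gorenstein local ring with algebraically closed residue field of characteristic zero has finite CM-representation type exactly when it is a simple plane curve singularity, i.e. $A\cong k[[x,y]]/(f)$ with $f$ one of $A_n\colon x^2-y^{n+1}$, $D_n\colon x^2y+y^{n-1}$, $E_6\colon x^3-y^4$, $E_7\colon x^3-xy^3$, $E_8\colon x^3-y^5$ (normalized so that the ideals displayed in the statement are the natural candidates). Thus one only has to list the Ulrich ideals of each of these hypersurfaces.

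The second step is to pin down the number of generators of a non-parameter Ulrich ideal $I$. Over a hypersurface every maximal Cohen--Macaulay module has an eventually two-periodic minimal free resolution (Eisenbud), hence bounded Betti numbers; but Theorem~\ref{betti} with $d=1$ gives $\beta_i(A/I)=n(n-1)^{i-1}$ for all $i\ge1$, where $n=\mu_A(I)$, and $\Syz^1_A(A/I)=I$ is maximal Cohen--Macaulay, so these numbers are bounded. Hence $n\le2$; since in dimension one an ideal is a parameter ideal precisely when $\mu_A=1$, every Ulrich ideal which is not a parameter satisfies $\mu_A(I)=2$, and then $\beta_i(A/I)=2$ for all $i\ge1$, so $I\cong\Coker(\partial_2\colon A^2\to A^2)$ is, up to isomorphism, the cokernel of a $2\times2$ matrix factorization of $f$ --- equivalently, a maximal Cohen--Macaulay $A$-module of (generic) rank one that is minimally generated by two elements. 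By Theorem~\ref{7.7} there are only finitely many non-parameter Ulrich ideals; more precisely, the consequence of Theorem~\ref{betti} that $\Syz^i_A(A/I)\cong\Syz^i_A(A/J)$ (for some $i$) forces $I=J$, applied with $i=1$, shows that $I\mapsto I$ is an \emph{injection} from the set of non-parameter Ulrich ideals into the finite, explicitly tabulated set of isomorphism classes of two-generated rank-one maximal Cohen--Macaulay $A$-modules (for the ADE hypersurfaces these are classified via the $2\times2$ matrix factorizations of $f$; see Yoshino's book).

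It then remains to walk through this finite list for each type, and for each class decide --- by the injection above there is at most one candidate ideal --- whether that ideal $I=(a,b)$ is Ulrich, by the direct monomial test: produce a principal reduction $Q=(q)$ with $I^2=QI$, and verify that $I/I^2$ is $A/I$-free, i.e. (by a length count) that $(q):I=I$. For $A_n$ with $f=x^2-y^{n+1}$ one finds that $(x,y^k)$ always has reduction number one --- with reduction $(y^k)$ when $2k\le n+1$ and $(x)$ when $2k\ge n+1$ --- and that $(x,y^k)/(x,y^k)^2$ is $A/(x,y^k)$-free precisely when $2k\le n+1$; discarding principal (parameter) ideals leaves exactly the displayed $A_n$ family. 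The types $E_6,E_7,E_8$ are similar and short: each has only a handful of two-generated rank-one maximal Cohen--Macaulay modules, and a quick computation ($(x,y^2)$ is Ulrich in $E_6$ while $(x,y^3)$ already fails the freeness, $(x,y^3)$ is the only one in $E_7$, and in $E_8$ every candidate fails) yields $\{(x,y^2)\}$, $\{(x,y^3)\}$, $\emptyset$. I expect $D_n$ to be the main obstacle. There the curve is reducible --- three branches when $n$ is even, two when $n$ is odd --- so ``rank one'' must be read branch-by-branch, some of the relevant maximal Cohen--Macaulay modules are decomposable, the matrix-factorization bookkeeping is heavier, and one must simultaneously check that the ``exotic'' ideals $(x\pm\sqrt{-1}\,y^{(n-2)/2},\,y^{n/2})$ (for $n$ even) genuinely satisfy both Ulrich conditions and that no further ideal is missed; carrying this out is exactly where the full list of indecomposable maximal Cohen--Macaulay $D_n$-modules does the real work.
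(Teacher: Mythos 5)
Your framework is sound and closely parallels the paper's proof, but there are two points worth flagging.

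First, the $\mu_A(I)=2$ step. You get it by combining Eisenbud's eventual two--periodicity over a hypersurface with the Betti-number formula $\beta_i(A/I)=n(n-1)^{i-1}$ from Theorem~\ref{7.1}. That works, but it is more roundabout than needed: Corollary~\ref{2.6}(b) already gives $\mu_A(I)=d+1=2$ for any non-parameter Ulrich ideal of a Gorenstein ring, with no reference to the hypersurface structure. More importantly, periodicity gives you more than boundedness of Betti numbers --- it gives $\Syz_A^1(I)\cong I$ (this is Corollary~\ref{7.4}(1) with $n-d=1$). The paper builds that condition into the candidate set $\calC_A$ (Proposition~\ref{8.8}) and then uses the Auslander--Reiten quiver, where $\tau M\cong\Syz_A^1(M)$ (Lemma~\ref{8.6}) turns $\Syz^1(M)\cong M$ into a visible combinatorial condition (a dotted-line self-pairing, Proposition~\ref{8.7}). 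Your candidate set --- all $2$-generated rank-one MCM modules, i.e.\ cokernels of $2\times 2$ matrix factorizations of $f$ --- is larger, so you plan to test each one directly; that is logically fine, but it is the reason your casework is heavier than the paper's, where the self-syzygy constraint prunes to a short list before any reduction/$Q:I=I$ verification.

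Second, and this is the genuine gap: the $D_n$ case is not carried out. You correctly identify it as the hard case (decomposable candidates like $A'\oplus B\cong(x^2,y)$; the exotic ideals $(y^{n/2},\,x\mp\sqrt{-1}\,y^{(n-2)/2})$ for $n$ even), but you leave the verification as a plan. In the paper this is where the substantive work happens: restricting $\calC_A$ via the quiver figures, producing concrete ideal-theoretic isomorphisms $A'\oplus B\cong(x^2,y)$, $C_\pm\oplus D_\pm\cong(y^{n/2},x\mp\sqrt{-1}y^{(n-2)/2})$, and then --- for the exotic ideals --- finding a non-obvious principal reduction $Q$ and proving $I^2=QI$ and $Q:I=I$ by an explicit change of variables and a residue-ring computation. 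A blind attempt that stops at ``the full list of indecomposable $D_n$-modules does the real work'' has not proved the $D_n$ lines of the statement. The $\rmE_6,\rmE_7,\rmE_8$ computations you sketch are in the right spirit but are likewise not actually carried out (and in $\rmE_6$, the ideal $(x,y^3)$ you propose to test can be ruled out earlier by the $\Syz^1(M)\cong M$ constraint, which you do not exploit).

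In short: the reduction steps (ADE classification, $\mu=2$, injectivity via Corollary~\ref{7.6}, finiteness via Theorem~\ref{7.7}) are all correct and essentially match the paper; the candidate-enumeration method (matrix factorizations vs.\ AR quiver with $\tau=\Syz^1$) is a legitimate alternative; but the actual classification --- especially $D_n$ --- remains unproved in your write-up.
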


Unless otherwise specified, throughout this paper, let $(A,\fkm)$ be a Cohen--Macaulay local ring of dimension $d \ge 0$ and let $I$ be an $\fkm$--primary ideal of $A$ which contains a parameter ideal $Q = (a_1, a_2, \ldots, a_d)$ as a reduction. We put $n =\mu_A(I)$, the number of elements in a minimal system of generators of $I$.


\section{Ulrich ideals}\label{ulid}
The purpose of this section is to summarize basic properties of Ulrich ideals. To begin with, let us recall the definition.

\begin{defn}\label{2.1}
We say that $I$ is an Ulrich ideal of $A$, if $I^2=QI$ and the $A/I$--module $I/I^2$ is free.
\end{defn}

We note the following.

\begin{ex}\label{2.2}
Let $R$ be a Cohen--Macaulay local ring with maximal ideal $\fkn$ and dimension $d \ge 0$. Let $F=R^r$ with $r > 0$ and let  $A=R \ltimes F$ be the idealization of $F$ over $R$. Then $A$ is a Cohen--Macaulay local ring with maximal ideal $\fkm = \fkn \times F$ and $\dim A = d$. 
Let $\q$ be an arbitrary parameter ideal of $R$ and put $Q=\q A$. Then the ideal  $I=\q \times F$ of $A$ contains the parameter ideal $Q$  as a reduction. We actually have $I^2 = QI$ and $I/I^2$ is $A/I$--free, so that $I$ is an Ulrich ideal of $A$ with $n = \mu_A(I) = r + d > d$. Hence the local ring $A$ contains infinitely many Ulrich ideals which are not parameters.
\end{ex}

\begin{proof}
It is routine to check that $I^2 = QI$, while $I/I^2 = \q/\q^2 \times F/\q F$ is a free module over $A/I = R/\q$.
\end{proof}



\begin{lem} \label{2.3}
Suppose that $I^2 =QI$.  
Then$:$
\begin{enumerate}[\rm(1)]
\item $\e_{I}^0(A) \le (\mu(I)-d+1)\ell_A(A/I)$ holds true. 
\item The following conditions for $I$ are equivalent$:$
\begin{enumerate}[\rm(a)]
\item Equality holds in $(1)$. 
\item $I$ is an Ulrich ideal. 
\item $I/Q$ is a free $A/I$-module. 
\end{enumerate}
\end{enumerate} 
\end{lem}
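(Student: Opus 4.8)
The plan is to exploit the short exact sequence $0 \to Q \to I \to I/Q \to 0$ together with the hypothesis $I^2 = QI$, which forces a strong rigidity on $I/Q$ as an $A/I$-module. First I would set $\ell = \ell_A(A/I)$ and recall that since $I^2 = QI$ we have $\rme_I^0(A) = \rme_Q^0(A) = \ell_A(A/Q)$, and also that $I/Q$ is killed by $I$ (because $I \cdot I = QI \subseteq Q$), hence $I/Q$ is an $A/I$-module. The additivity of length on the above sequence gives $\ell_A(A/Q) = \ell + \ell_A(I/Q)$, so $\rme_I^0(A) = \ell + \ell_A(I/Q)$.

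For part (1), the inequality $\rme_I^0(A) \le (\mu(I) - d + 1)\ell$ is then equivalent to $\ell_A(I/Q) \le (\mu(I) - d)\ell$. To see this, choose a minimal generating set of $I$ extending the $d$ generators $a_1, \ldots, a_d$ of $Q$ (possible after a standard general-position argument, or directly since $Q$ is a reduction and one may assume $a_i$ are part of a minimal generating set of $I$); the images of the remaining $n - d = \mu(I) - d$ generators generate $I/Q$ over $A$, and since $I$ annihilates $I/Q$ they generate it over $A/I$. Therefore $I/Q$ is a homomorphic image of $(A/I)^{n-d}$, giving $\ell_A(I/Q) \le (n-d)\ell$, which is exactly what is needed.

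For part (2), the equivalences follow by tracking when the inequality in the previous paragraph is an equality. Equality in (1) holds iff $\ell_A(I/Q) = (n-d)\ell$, iff the surjection $(A/I)^{n-d} \twoheadrightarrow I/Q$ is an isomorphism (a surjection of finite-length modules is an isomorphism exactly when lengths agree), iff $I/Q$ is $A/I$-free of rank $n - d$; this is the equivalence (a) $\Leftrightarrow$ (c). For (b) $\Leftrightarrow$ (c), one has the exact sequence $0 \to Q/I^2 \to I/I^2 \to I/Q \to 0$ of $A/I$-modules. Since $a_1, \ldots, a_d$ is a regular sequence generating $Q$ and $I^2 = QI$, a short computation (or a direct Koszul-type argument) shows $Q/I^2 = Q/QI \cong (A/I)^d$ is $A/I$-free of rank $d$; indeed $Q/QI \cong Q \otimes_A A/I$ and $Q$ is generated by a regular sequence so $Q/QI$ is free of rank $d$ over $A/I$. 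Thus in the displayed sequence the left term is always free of rank $d$, so $I/I^2$ is $A/I$-free iff the sequence splits and $I/Q$ is $A/I$-free (using that over the Artinian local ring $A/I$ a module is free iff it is a direct summand of a free module, and an extension of free by free is free iff it is free iff its projective dimension is zero; more concretely, compare minimal numbers of generators: $\mu(I/I^2) = n$ always, while $\mu_{A/I}((A/I)^d \oplus I/Q) = d + \mu(I/Q)$, with equality of lengths $\ell_A(I/I^2) = d\ell + \ell_A(I/Q)$ forcing, when $I/I^2$ is free of rank $n$, that $I/Q$ be free of rank $n-d$).

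The main obstacle I anticipate is the bookkeeping in the last equivalence: one must handle the extension $0 \to Q/I^2 \to I/I^2 \to I/Q \to 0$ carefully, since "free $\oplus$ something $=$ free" requires knowing the category is nice enough (here $A/I$ is Artinian local, so projectivity equals freeness and one can argue via minimal generators and lengths), and one must be sure the identification $Q/QI \cong (A/I)^d$ genuinely uses that $a_1, \ldots, a_d$ form a regular sequence rather than merely generate a parameter ideal. Everything else is a direct length count built on $I^2 = QI$ and $\rme_I^0(A) = \rme_Q^0(A)$.
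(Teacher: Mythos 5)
Your argument follows the same general plan as the paper's---a length count against a free module, feeding into the bookkeeping sequence $0 \to Q/I^2 \to I/I^2 \to I/Q \to 0$---but there is a gap at the step you flagged yourself: the claim that $a_1,\dots,a_d$ extend to a minimal generating set of $I$, so that $I/Q$ is a quotient of $(A/I)^{n-d}$. Neither of your two suggested justifications works as stated. A ``general-position'' argument replaces the $a_i$ by generic linear combinations, which requires an infinite residue field (not assumed here) and in any case changes the given $Q$; and ``one may assume'' is not available either, since $Q$ is a fixed datum of the lemma, not something you get to choose. The fact \emph{is} true under the hypothesis $I^2=QI$, but it needs an argument: if some $a_i$, say $a_1$, lay in $(a_2,\dots,a_d)+\fkm I$, then with $Q'=(a_2,\dots,a_d)$ one would have $I^2=QI\subseteq Q'I+\fkm I^2$, whence $I^2=Q'I$ by Nakayama, making the $(d-1)$-generated ideal $Q'$ a reduction of the $\fkm$-primary ideal $I$ of height $d$---impossible. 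You must supply something like this, or else follow the paper, which sidesteps the issue completely by using the surjection $(A/I)^n\twoheadrightarrow I/I^2$ (which always exists) and the identity $\ell_A(I/I^2)=\rme_I^0(A)+(d-1)\,\ell_A(A/I)$, so that it never needs to count generators of $I/Q$.

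Once that point is repaired, the rest goes through. Your decomposition of the equivalences is (a)$\Leftrightarrow$(c) directly via $I/Q$ and then (b)$\Leftrightarrow$(c) via the extension, whereas the paper does (a)$\Leftrightarrow$(b) directly via $I/I^2$ and then (b)$\Leftrightarrow$(c); both are fine. For (b)$\Rightarrow$(c) your length comparison works, though it again leans on the minimal-generator count for $I/Q$; the paper's route is slightly cleaner: $Q/I^2$ and $I/I^2$ both free gives $\operatorname{pd}_{A/I}(I/Q)\le1$, and a finitely generated module of finite projective dimension over the Artinian local ring $A/I$ is free by Auslander--Buchsbaum, with no generator count required.
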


\begin{proof}
(1)
Since $Q$ is generated by an $A$-sequence, 
$\ell_A(Q/I^2) = \ell_A(Q/QI)=d \cdot \ell_A(A/I)$. 
Hence
$\ell_A(I/I^2) = \ell_A(A/Q) + \ell_A(Q/QI)-\ell_A(A/I)
=\e_{I}^0(A)+ (d-1) \cdot \ell_A(A/I)$.
\par 
On the other hand, since there exists a natural surjection 
$(A/I)^{\mu_A(I)} \to I/I^2$, we have 
$\ell_A(I/I^2) \le \mu_A(I)\cdot \ell_A(A/I)$.
Thus we obtain the required inequality. 
\par 
(2) $(a)\Leftrightarrow (b):$
Equality holds true if and only if the surjection as above
$(A/I)^{\mu_A(I)}\to I/I^2$ is an isomorphism, that is, 
$I/I^2$ is a free $A/I$-module. 
This is equivalent to saying that $I$ is an Ulrich ideal. 
\par
 $(b)\Leftrightarrow (c):$ 
We look at the canonical exact sequence
$0 \to Q/I^2 \overset{\varphi}{\to} I/I^2 \to I/Q \to 0$
of $A/I$--modules. Then, since $I^2 = QI$, $Q/I^2 = A/I \otimes_{A/Q}Q/Q^2$ is $A/I$--free, whence $I/I^2$ 
is $A/I$--free if $I/Q$ is $A/I$--free. Conversely, if $I/I^2$ is $A/I$--free, then the $A/I$-module 
$I/Q$ has projective dimension at most $1$. As $A/I$ is Artinian, $I/Q$ is $A/I$--free.
\end{proof}

We need the following result in Section \ref{ulmd}.

\begin{prop}\label{2.4}
Suppose that the residue class field $A/\fkm$ of $A$ is infinite. Then the following conditions are equivalent.
\begin{enumerate}
\item[$(1)$] $I$ is an Ulrich ideal of $A$.
\item[$(2)$] For every minimal reduction $\q$ of $I$, $I^2 \subseteq \q$ and the $A/I$--module $I/\q$ is free.
\end{enumerate}
\end{prop}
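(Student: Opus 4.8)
The plan is to prove the equivalence by reducing everything to the already-established Lemma \ref{2.3}, which characterizes Ulrich ideals among ideals satisfying $I^2 = QI$ for a \emph{fixed} reduction $Q$. The subtlety is that the definition of Ulrich ideal refers to the existence of \emph{some} minimal reduction, whereas condition (2) here quantifies over \emph{every} minimal reduction, so the real content is a uniformity statement.

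First I would handle the implication $(2) \Rightarrow (1)$, which is nearly immediate: since $A/\fkm$ is infinite, $I$ admits a minimal reduction $\q = (a_1, \ldots, a_d)$ generated by a system of parameters (a general linear combination of a generating set works, by the usual prime-avoidance / Noether normalization argument over an infinite field). Applying (2) to this particular $\q$ gives $I^2 \subseteq \q$, hence $I^2 = \q I$ because $\q \subseteq I$ forces $\q I \subseteq I^2$ and conversely $I^2 \subseteq \q \cap I^2$; more carefully, $I^2 \subseteq \q$ together with $I \cdot I^2 \subseteq \q I$ and the standard fact that $I^2 \subseteq \q$ implies $I^2 = \q I$ (one shows $I^{n+1} = \q I^n$ for the reduction number argument, but here it is exactly the content of \cite[Remark 3.1.6]{GW} cited after Definition \ref{1.1}, that $I^2 = \q I \iff I^2 \subseteq \q$ when $\q$ is a parameter reduction). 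Then (2) also says $I/\q$ is $A/I$-free, so by the equivalence $(c) \Leftrightarrow (b)$ of Lemma \ref{2.3}(2), $I$ is an Ulrich ideal.

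The harder direction is $(1) \Rightarrow (2)$: I must upgrade the ``some reduction'' in the definition of Ulrich to ``every minimal reduction''. Suppose $I$ is Ulrich, so $I^2 = QI$ for some parameter reduction $Q$ and $I/I^2$ is $A/I$-free. Let $\q$ be an arbitrary minimal reduction of $I$. The key point is that for any minimal reduction $\q$ of $I$, one still has $I^2 = \q I$: indeed $\q$ is minimal, so its reduction number is controlled, but more directly, $I/I^2$ being $A/I$-free of rank $n = \mu_A(I)$ means $\ell_A(I/I^2) = n\,\ell_A(A/I)$, and this numerical equality is what forces, via the length computation in the proof of Lemma \ref{2.3}(1), that $\e_I^0(A) = (n-d+1)\ell_A(A/I)$; since $\e_I^0(A) = \e_\q^0(A) = \ell_A(A/\q)$ for \emph{any} minimal reduction $\q$, running the length identity backwards shows $\ell_A(I/\q I) = n\,\ell_A(A/I)$, forcing $\q I = I^2$ (equivalently $I^2 \subseteq \q$). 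Finally, once $I^2 = \q I$ holds, the canonical sequence $0 \to \q/I^2 \overset{\varphi}{\to} I/I^2 \to I/\q \to 0$ as in the proof of Lemma \ref{2.3} shows $\q/I^2 = A/I \otimes_{A/\q} \q/\q^2$ is $A/I$-free, and since $I/I^2$ is $A/I$-free, the quotient $I/\q$ has projective dimension $\le 1$ over the Artinian ring $A/I$, hence is $A/I$-free. This gives (2).

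I expect the main obstacle to be the \emph{uniformity of the equation $I^2 = \q I$ across all minimal reductions}: the cleanest route is the numerical one via multiplicities just sketched (using that $\e_I^0(A)$ is a reduction-independent invariant and the length identity $\ell_A(I/I^2) = \e_I^0(A) + (d-1)\ell_A(A/I)$ from Lemma \ref{2.3}), and I would double-check that the analogous identity $\ell_A(I/\q I) = \e_\q^0(A) + (d-1)\ell_A(A/I) = \e_I^0(A) + (d-1)\ell_A(A/I)$ holds verbatim for every parameter reduction $\q$, so that freeness of $I/I^2$ transfers to the bound being attained with $\q$ in place of $Q$. Everything else is a routine reprise of the homological bookkeeping already done in Lemma \ref{2.3}.
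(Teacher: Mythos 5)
Your implication $(1)\Rightarrow(2)$ is correct and in fact more detailed than the paper's one-line reference to Lemma~\ref{2.3}: the multiplicity bookkeeping you carry out (using that $\e_I^0(A)=\e_\q^0(A)=\ell_A(A/\q)$ is reduction-independent, together with $\ell_A(\q/\q I)=d\,\ell_A(A/I)$ and $\ell_A(I/I^2)=n\,\ell_A(A/I)$) does show $\ell_A(I/\q I)=\ell_A(I/I^2)$, whence $\q I=I^2$ since $\q I\subseteq I^2$, and the rest follows as you say. That direction is sound.

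The implication $(2)\Rightarrow(1)$, however, has a genuine gap, and it is exactly where you say the argument is ``nearly immediate.'' You apply hypothesis (2) to a \emph{single} minimal reduction $\q$ and then claim that $I^2\subseteq\q$ forces $I^2=\q I$, citing this as ``the standard fact'' from \cite[Remark 3.1.6]{GW}. No such implication holds: what \cite[Remark 3.1.6]{GW} says is that the $a$-invariant condition in Definition~\ref{1.1} is equivalent to $I^2=QI$, not that $I^2\subseteq Q$ already implies $I^2=QI$. The paper's own Remark~\ref{2.5} is a direct counterexample to your claim: for $A=k[[t^4,t^5,t^6]]$, $I=(t^4,t^5)$ and $Q=(t^4)$, one has $I^2\subseteq Q$ and $I/Q\cong A/I$ is free, yet $I^2\ne QI$ (indeed $t^{10}\in I^2\setminus QI$) and $I$ is not Ulrich. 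So the two conditions you invoke for one reduction simply do not imply $I^2=\q I$, and hence do not give Ulrichness.

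The real content of condition (2) is the universal quantifier over all minimal reductions, and the paper's proof exploits it essentially. Using the infinite residue field, it chooses generators $x_1,\dots,x_n$ of $I$ such that \emph{every} $d$-element subset spans a reduction; then for each index $i$ it picks a reduction $\q_\Lambda$ avoiding $x_i$, and the hypothesis that $I/\q_\Lambda$ is $A/I$-free (for that particular $\q_\Lambda$) forces the coefficient $c_i$ of any relation $\sum c_jx_j\in I^2$ to lie in $I$. Running over all $i$ shows $I/I^2$ is $A/I$-free, and only then does a second pass with $\q=(x_1,\dots,x_d)$ yield $I^2=\q I$. You would need to replace your single-reduction shortcut with some version of this varying-reduction argument, or find another way to use the ``for every minimal reduction'' hypothesis; without it, the implication fails.
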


\begin{proof} Thanks to Lemma \ref{2.3}, we have only to show $(2) \Rightarrow (1)$.  We may assume $n > d > 0$. Let us  choose elements $x_1, x_2, \ldots, x_n$ of $I$ so that $I = (x_1, x_2, \ldots, x_n)$ and the ideal $(x_{i_1}, x_{i_2}, \ldots, x_{i_d})$ is a reduction of $I$ for every choice of integers $1 \le i_1 < i_2 < \ldots < i_d \le n$.
(Since $A/\fkm$ is infinite, there exists a reduction of $I$.)

We will firstly show that $I/I^2$ is $A/I$--free. Let $c_1, c_2, \ldots, c_n \in A$ and assume that $\sum_{i=1}^nc_ix_i\in I^2$. Let $1 \le i \le n$ and choose a subset $\Lambda \subseteq \{1, 2, \ldots, n \}$ so that $\sharp\Lambda=d$ and $i \not\in \Lambda$. We put $\q = (x_j \mid j \in \Lambda)$. Then, because $\mu_A(I/\q) = n -d$ and $I/\q = (\overline{x_j} \mid j \not\in \Lambda)$, $\{\overline{x_j}\}_{j \not\in \Lambda}$ form an $A/I$--free basis of $I/\q$, where $\overline{x_j}$ denotes the image of $x_j$ in $I/\q$. Therefore, $c_j \in I$ for all $j \not\in \Lambda$, because $\sum_{j \not\in \Lambda}c_j\overline{x_j}= 0$ in $I/\q$; in particular, $c_i \in I$. Hence $I/I^2$ is $A/I$--free.

Let $\q = (x_1, x_2, \ldots, x_d)$ and let $y \in I^2 \subseteq \q$. We write $y = \sum_{i=1}^dc_ix_i$ with $c_i \in A$. Then, since $\sum_{i=1}^dc_i\overline{x_i} = 0$ in $I/I^2$, we get $c_i \in I$ for all $1 \le i \le d$, because the images $\overline{x_i}$ of $x_i$~($1 \le i \le n)$ in $I/I^2$ form an $A/I$--free basis of $I/I^2$. Thus $y \in \q I$, so that $I^2 = \q I$ and $I$ is an Ulrich ideal of $A$.
\end{proof}

\begin{rem}\label{2.5}
Even though $I^2 \subseteq Q$ and $I/Q$ is $A/I$--free for some minimal reduction $Q$, the ideal $I$ is not necessarily an Ulrich ideal, as the following example shows. Let $A = k[[t^4,t^5,t^6]] \subseteq k[[t]]$, where $k[[t]]$ denotes the formal power series ring over a field $k$. Let $ I = (t^4, t^5)$ and $Q=(t^4)$. Then $I^4 = Q I^3$ but $I^3 \ne Q I^2$, while $I^2 \subseteq Q$ and $I/Q \cong A/I$. 
\end{rem}

For each Cohen--Macaulay $A$--module $M$ of dimension $s$, we put 
$$\rmr_A(M) = \ell_A(\operatorname{Ext}_A^s(A/\fkm,M))$$ and call it the Cohen--Macaulay type of $M$. 
Suppose that $A$ is a Gorenstein ring. Then $I$ is said to be \textit{good}, if $I^2 = QI$ and $Q : I = I$ (\cite{GIW}).
With this notation Ulrich ideals of a Gorenstein ring are characterized in the following way.

\begin{cor}\label{2.6}
Assume that $I$ is not a parameter ideal and put $n = \mu_A(I) > d$.
\begin{enumerate}
\item[$(\mathrm{a})$] If $I$ is an Ulrich ideal of $A$, then $(\mathrm{i})$ $Q : I = I$ and $(\mathrm{ii})$ $(n-d){\cdot}\rmr (A/I) \le \rmr (A)$, whence $n \le \rmr (A) + d$.
\item[$(\mathrm{b})$] Suppose that $A$ is a Gorenstein ring. Then the following are equivalent.
\begin{enumerate}
\item[$(1)$] $I$ is an Ulrich ideal of $A$.
\item[$(2)$] $I$ is a good ideal of $A$ and $\mu_A(I) = d+ 1$.
\item[$(3)$] $I$ is a good ideal of $A$ and $A/I$ is a Gorenstein ring.
\end{enumerate}
\end{enumerate}
\end{cor}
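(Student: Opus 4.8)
\emph{Part (a).} I would extract both assertions from Lemma~\ref{2.3}. Since $I$ is Ulrich, $I^2=QI$, so $I\cdot(I/Q)=0$ and $I/Q$ is an $A/I$-module, which by Lemma~\ref{2.3} is free of rank $n-d\ (\ge 1)$. For~(i), note that $Q:_A I=\Ann_A(I/Q)$, because $x\in Q:I$ means exactly $x\cdot(I/Q)=0$ in $A/Q$; as $I/Q\cong(A/I)^{n-d}$ with $n-d\ge1$, its $A$-annihilator is $\Ann_A(A/I)=I$, so $Q:I=I$. For~(ii), apply $\Hom_A(A/\fkm,-)$ to $0\to I/Q\to A/Q\to A/I\to 0$ to obtain an embedding $\Soc(I/Q)\hookrightarrow\Soc(A/Q)$; since $\Soc(I/Q)\cong\Soc(A/I)^{\oplus(n-d)}$ this gives $(n-d)\,\ell_A(\Soc(A/I))\le\ell_A(\Soc(A/Q))$. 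Now $\ell_A(\Soc(A/I))=\rmr(A/I)$, while $\Soc(A/Q)=\Hom_{A/Q}(A/\fkm,A/Q)\cong\Ext_A^d(A/\fkm,A)$ by the standard change of rings along the $A$-regular sequence $a_1,\dots,a_d$, so $\ell_A(\Soc(A/Q))=\rmr(A)$. Hence $(n-d)\,\rmr(A/I)\le\rmr(A)$, and since $\rmr(A/I)\ge1$ also $n\le\rmr(A)+d$.

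\emph{Part (b), main tool.} For part~(b) the linchpin I would establish first is that for \emph{every} good ideal $I$ of a Gorenstein ring $A$ one has $I/Q\cong\omega_{A/I}$, the canonical module of $A/I$. Indeed $B:=A/Q$ is Artinian Gorenstein and $\overline I:=I/Q$ is an ideal of $B$ with $\overline I{}^2=0$ (as $I^2=QI\subseteq Q$); the hypothesis $Q:I=I$ says precisely that $0:_B\overline I=\overline I$, and Matlis duality over $B$ (whose dualizing functor is $\Hom_B(-,B)$) then yields $\overline I=0:_B\overline I=\Hom_B(B/\overline I,B)=\Hom_{A/Q}(A/I,A/Q)\cong\omega_{A/I}$. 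From this I would record: $\ell_A(I/Q)=\ell_A(\omega_{A/I})=\ell_A(A/I)$, so $\ell_A(A/Q)=2\,\ell_A(A/I)$; and $A/I$ is Gorenstein if and only if $\omega_{A/I}$, equivalently $I/Q$, is a free $A/I$-module of rank one.

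\emph{Part (b), the equivalences.} Granting the above, the implications become one-liners. $(1)\Rightarrow(2)$ and $(1)\Rightarrow(3)$: an Ulrich ideal has $I^2=QI$ and $Q:I=I$ by~(a)(i), so is good; and~(a)(ii) with $\rmr(A)=1$ forces $n-d=1$ and $\rmr(A/I)=1$, i.e.\ $\mu_A(I)=d+1$ and $A/I$ Gorenstein. $(3)\Rightarrow(1)$: if $I$ is good and $A/I$ Gorenstein, then $I/Q\cong\omega_{A/I}\cong A/I$ is $A/I$-free, so $I$ is Ulrich by Lemma~\ref{2.3}. $(2)\Rightarrow(1)$: if $I$ is good then $\ell_A(A/Q)=2\,\ell_A(A/I)$; since $\ell_A(A/Q)=\e_Q^0(A)=\e_I^0(A)$ ($A$ is Cohen--Macaulay and $Q$ is a reduction of $I$), this gives $\e_I^0(A)=2\,\ell_A(A/I)=(\mu_A(I)-d+1)\,\ell_A(A/I)$, i.e.\ equality in Lemma~\ref{2.3}(1), hence $I$ is Ulrich.

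\emph{Expected obstacle.} The real work is not in any single computation but in correctly assembling the two structural inputs---the Matlis-duality identification $I/Q\cong\omega_{A/I}$ and the change-of-rings isomorphism $\Soc(A/Q)\cong\Ext_A^d(A/\fkm,A)$---and in keeping track of which module is being viewed over $A$, over $A/Q$, or over $A/I$; once these are in hand, everything feeds mechanically into Lemma~\ref{2.3}. I would also be careful to use $n-d\ge1$ at each point where an annihilator or rank comparison needs the relevant $A/I$-module to be nonzero.
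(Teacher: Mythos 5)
Your proof is correct. For part (a) it is essentially the paper's argument: the paper too reads (i) directly off the isomorphism $I/Q\cong(A/I)^{n-d}$ with $n>d$, and proves (ii) via the type comparison $(n-d)\,\rmr(A/I)=\rmr_A(I/Q)\le\rmr(A/Q)=\rmr(A)$, which is exactly the socle embedding you spell out. For part (b) you take a genuinely different route. The paper settles $(2)\Rightarrow(1)$ in one line: if $\mu_A(I)=d+1$ then $I/Q$ is cyclic, and $\Ann_{A/I}(I/Q)=0$ forces $I/Q\cong A/I$; you instead derive $\ell_A(A/Q)=2\ell_A(A/I)$ from your duality identification and invoke the equality case of Lemma~\ref{2.3}(1), which is longer but equally valid. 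For $(3)\Rightarrow(1)$ the paper appeals to the assertion that a finitely generated faithful module over an Artinian Gorenstein local ring is free; as literally stated that is false (e.g.\ $R\oplus R/\fkm$ over $R=k[x]/(x^2)$ is faithful but not free), and what the paper's argument really needs is the extra information $\ell_A(I/Q)=\ell_A(A/I)$ that your Matlis-duality identification $I/Q\cong\omega_{A/I}$ supplies explicitly. Your observation that every good ideal of a Gorenstein ring has $I/Q\cong\omega_{A/I}$ is thus the cleanest bridge between (1) and (3), and it is a more robust input than the paper's stated general principle.
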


\begin{proof}
Since $I/Q \cong (A/I)^{n-d}$ and $n > d$, assertion (i) is clear. This isomorphism also shows $(n-d){\cdot}\rmr (A/I) = \rmr_A(I/Q) \le \rmr (A/Q) = \rmr (A),$ which is assertion (ii). 

We now suppose that $A$ is a Gorenstein ring. If $I$
 is an Ulrich ideal of $A$, then by assertion (i) $I$ is a good ideal of $A$ and  $(n-d){\cdot}\rmr (A/I)=1$, so that $n = d+1$ and $A/I$ is a Gorenstein ring. Conversely, assume that $I$ is a good ideal of $A$. If $n = d+1$, then since $I/Q$ is a cyclic $A$--module with $Q:I = I$, we readily get $I/Q \cong A/I$, whence $I$ is an Ulrich ideal of $A$ by Lemma \ref{2.3}. If $A/I$ is a Gorenstein ring, then $I/Q$ is a faithful $A/I$--module.
Since in general a finitely generated faithful module over an Artinian Gorenstein local ring is free, $I/Q$ is $A/I$-free.
\end{proof}

We close this section with the following examples.

\begin{ex}\label{2.7}
Let $k$ be a field.
\begin{enumerate}
\item[$(1)$] Let $A = k[[t^4, t^6, t^{4\ell - 1}]]$~($ \ell \ge 2$). Then $I = (t^4, t^6)$ is an Ulrich ideal of $A$ containing $Q = (t^4)$ as a reduction.
\item[$(2)$]
Let $q, d \in \mathbb{Z}$ such that $d \ge 1$ and $r\ge1$.
Let $R = k[[X_1, \ldots, X_d, X_{d+1}]]$ be the formal power series ring and let $A = R/(X_1^2 + \cdots + X_d^2 + X_{d+1}^{2r})$.
Let $x_i$ be the image of $X_i$ in $A$ and put $I = (x_1, \ldots, x_d, x_{d+1}^r)$.
Then $I$ is an Ulrich ideal of $A$ with $\mu_A(I) = d + 1$. 
\item[$(3)$] Let $K/k~(K \ne k)$ be a finite extension of fields and assume that there are no proper intermediate fields between $K$ and $k$. Let $V=K[[t]]$ be the formal power series ring over $K$ and put $A = k+tK[[t]]~\subseteq V$. Then the ring $A$ contains a unique  Ulrich ideal, that is $\fkm = tV$, except parameter ideals.
\end{enumerate}
\end{ex}

\begin{proof}
(1) Let us identify $A = k[[X, Y, Z]]/(X^3-Y^2, Z^2 - X^{2\ell -2}Y)$, where $k[[X,Y,Z]]$ denotes the formal power series ring. We then have $I^2 = QI + (t^{12}) = QI$, while $Q : I = I$. In fact, since $A/Q = k[[Y,Z]]/(Y^2, Z^2)$ and $(0) : y = (y)$ in  $k[[Y,Z]]/(Y^2, Z^2)$ where $y$ denotes the image of $Y$ in $k[[Y,Z]]/(Y^2, Z^2)$, the equality $Q : I = I$ follows.

(2) Let $Q = (x_1, x_2, \ldots, x_d)$ (resp. $Q = (x_2, \ldots, x_d, x_{d+1}^\ell)$) if $q = 2\ell$ (resp. $q = 2\ell + 1$). It is standard to check that $I$ is good. The assertion follows from Corollary \ref{2.6}.

(3) $A$ is a Noetherian local ring of dimension one, because $V = K[[t]]$ is the normalization of $A$ which is a module--finite extension of $A$ with $A: V = tV =\fkm$. Hence there are no proper intermediate subrings between $V$ and $A$. Let $I$ be an Ulrich ideal of $A$ and assume that $I$ is  not a parameter ideal. Choose $a \in I$ so that $Q= (a)$ is a reduction of $I$. Let $\frac{I}{a} = \{\frac{x}{a} \mid x \in I\}$. Then, because $\frac{I}{a} = A[\frac{I}{a}]$ and $\frac{I}{a} \ne A$, we get $I = aV$. Hence $I = A:V = tV = \fkm$, because $I = Q : I$.  
\end{proof}

\begin{rem} \label{2.8}
Let $A$ be the ring of Example \ref{2.7} (3) and  put $I = t^n\fkm$ with $0 < n \in \mathbb{Z}$. Then $I \cong \fkm$ as an $A$--module but $I$ is not Ulrich, since $I \ne \fkm$.
This simple fact shows that for given $\fkm$--primary ideals $I$ and $J$ of $A$, $I$ is not necessarily an Ulrich ideal of $A$, even though $I \cong J$ as an $A$--module and $J$ is an Ulrich ideal of $A$.
\end{rem}

\section{Ulrich modules}\label{ulmd}
In this section we shall explain the basic technique of induction which dates back to \cite{V}. Firstly, let us recall the definition of an Ulrich module.

\begin{defn}
Let $M$ be a finitely generated $A$-module. Then we say that $M$ is an Ulrich $A$--module with respect to $I$, if the following conditions are satisfied. 
\begin{itemize}\label{1.2}
\item[(1)] $M$ is a maximal Cohen--Macaulay $A$-module.
\item[(2)] $IM = QM$.
\item[(3)] $M/IM$ is $A/I$-free.
\end{itemize}
\end{defn}

If $d = 1$ and $I$ is an Ulrich ideal of $A$, then $I$ is an Ulrich $A$--module with respect to $I$. More generally, higher syzygy modules of Ulrich ideals are Ulrich $A$--modules, as the following theorem shows. For each $i \ge 0$, let $\operatorname{Syz}_A^i(A/I)$ denote the $i$--th syzygy module of $A/I$ in a minimal free resolution.

\begin{thm}\label{3.2}
Let $I$ be an Ulrich ideal of $A$ and suppose that $I$ is not a parameter ideal of $A$. Then for all $i\ge d$, $\operatorname{Syz}_A^i(A/I)$
is an Ulrich $A$--module with respect to $I$.
\end{thm}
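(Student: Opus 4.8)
The plan is to prove the assertion by induction on $d=\dim A$, in two stages: first reduce ``$\operatorname{Syz}_A^i(A/I)$ is Ulrich for every $i\ge d$'' to the single case $i=d$, and then settle the case $i=d$ by a further induction on $d$ via the reduction‑modulo‑a‑parameter device of this section. Throughout I would use the elementary facts already available: a finite direct sum of Ulrich $A$‑modules with respect to $I$, and any direct summand of such a module, is again Ulrich with respect to $I$ (a direct summand of a finitely generated free module over the local ring $A/I$ is free); a nonzero free module is Ulrich with respect to $I$ only if $I=Q$; one has $I/Q\cong(A/I)^{n-d}$ for an Ulrich ideal (Lemma \ref{2.3}); and $\operatorname{Syz}_A^i(A/I)$ is maximal Cohen--Macaulay for $i\ge d$.

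\emph{Reduction to $i=d$.} Since $Q$ is generated by an $A$‑regular sequence of length $d$, $\pd_A Q=d-1$. Applying the horseshoe lemma to $0\to Q\to I\to(A/I)^{n-d}\to 0$, taking $k$‑th syzygies, and using that the contribution of $Q$ vanishes in homological degrees $\ge d$ while $I=\operatorname{Syz}_A^1(A/I)$, one gets for each $k\ge d$ an isomorphism $\operatorname{Syz}_A^{k+1}(A/I)\oplus A^{c_k}\cong\operatorname{Syz}_A^{k}(A/I)^{\oplus(n-d)}$ with some $c_k\ge 0$. Granting that $\operatorname{Syz}_A^d(A/I)$ is Ulrich, an induction on $k$ finishes this stage: if $\operatorname{Syz}_A^{k}(A/I)$ is Ulrich then so is $\operatorname{Syz}_A^{k}(A/I)^{\oplus(n-d)}$, hence so is $\operatorname{Syz}_A^{k+1}(A/I)\oplus A^{c_k}$, hence so are both of its summands; but a free Ulrich module forces $I=Q$, so $c_k=0$ and $\operatorname{Syz}_A^{k+1}(A/I)$ is Ulrich.

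\emph{The case $i=d$.} For $d=0$ one has $Q=(0)$, and $\operatorname{Syz}_A^0(A/I)=A/I$ is maximal Cohen--Macaulay over the Artinian ring $A$, is annihilated by $I=Q$, and satisfies $(A/I)/I(A/I)=A/I$, hence is Ulrich with respect to $I$. For $d\ge 1$ I would pick a nonzerodivisor $a=a_1\in Q$ that belongs to a minimal system of generators $x_1=a,x_2,\dots,x_n$ of $I$ (possible since $I/Q\cong(A/I)^{n-d}$), and put $\overline A=A/(a)$, $\overline I=I/(a)$, $\overline Q=Q/(a)$; then $\overline A$ is Cohen--Macaulay of dimension $d-1$, $\overline A/\overline I=A/I$, and $\overline I$ is an Ulrich ideal of $\overline A$ with $\mu_{\overline A}(\overline I)=n-1>d-1$, not a parameter ideal. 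Two observations then apply: (i) since $a\in Q$ and a maximal Cohen--Macaulay $A$‑module $M$ is $a$‑regular, $M$ is Ulrich over $A$ with respect to $I$ if and only if $M/aM$ is Ulrich over $\overline A$ with respect to $\overline I$ (use $IM=QM\Leftrightarrow\overline I(M/aM)=\overline Q(M/aM)$ and $M/IM=(M/aM)/\overline I(M/aM)$); and (ii) since $a$ is a nonzerodivisor on $I=\operatorname{Syz}_A^1(A/I)$ and on all of its syzygies (these lie in free modules), reducing a minimal free resolution of $I$ modulo $a$ stays exact and minimal, so $\operatorname{Syz}_{\overline A}^{j}(I/aI)\cong\operatorname{Syz}_A^{j+1}(A/I)/a\operatorname{Syz}_A^{j+1}(A/I)$ for all $j\ge 0$. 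As $\operatorname{Syz}_A^d(A/I)$ is maximal Cohen--Macaulay, (i) and (ii) reduce the case $i=d$ to showing that $\operatorname{Syz}_{\overline A}^{d-1}(I/aI)$ is Ulrich over $\overline A$. Here I would observe that the canonical sequence $0\to\overline A/\overline I\to I/aI\to\overline I\to 0$ of $\overline A$‑modules --- with left term the kernel $(a)/aI\cong A/I$ of $I/aI\twoheadrightarrow I/(a)=\overline I$ --- splits: a splitting $\overline I\to I/aI$ sends $\overline{x_k}$ to the class of $x_k$, which is well defined because every relation $\sum_k c_k x_k=0$ among the generators of $I$ has all coefficients $c_k\in I$ (as $(A/I)^n\to I/I^2$, $e_k\mapsto\overline{x_k}$, is an isomorphism, $I/I^2$ being $A/I$‑free). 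Thus $I/aI\cong(\overline A/\overline I)\oplus\overline I$, whence $\operatorname{Syz}_{\overline A}^{d-1}(I/aI)\cong\operatorname{Syz}_{\overline A}^{d-1}(\overline A/\overline I)\oplus\operatorname{Syz}_{\overline A}^{d}(\overline A/\overline I)$ (using $\overline I=\operatorname{Syz}_{\overline A}^1(\overline A/\overline I)$); by the induction hypothesis over $\overline A$ both summands are Ulrich over $\overline A$, hence so is their sum.

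The step I expect to require the most care is this last one. Because $a$ lies in $I$ it annihilates $A/I$, so $A/I$ cannot be reduced modulo $a$ directly; the way around is to reduce the syzygy $I$ instead (on which $a$ \emph{is} regular) and to notice that $I/aI$ splits off $\overline A/\overline I$, which is exactly what lets the induction hypothesis over $\overline A$ be fed back in. Establishing that splitting, and then interlocking it correctly with the base‑change isomorphism (ii), the Ulrich‑descent equivalence (i), and the horseshoe bookkeeping of the first stage, is where the real argument lies; the remaining ingredients --- the two observations (i) and (ii), the horseshoe syzygy computation, and the base case $d=0$ --- are routine.
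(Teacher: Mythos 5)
Your proof is correct, and its essential engine coincides with the paper's: reduce modulo a parameter $a\in Q$ that is part of a minimal generating set of $I$, use the Vasconcelos splitting $I/aI\cong A/I\oplus\overline I$ (your construction of the section $\overline I\to I/aI$ via ``all coefficients of a relation lie in $I$'' is the same fact the paper uses to show the intersection $A\overline{x_1}\cap\sum_{i\ge2}A\overline{x_i}$ is zero), combine it with the base-change isomorphism $\Syz^{j}_{\overline A}(I/aI)\cong\Syz^{j+1}_A(A/I)/a\Syz^{j+1}_A(A/I)$, and feed the result back through the Ulrich-descent equivalence (i). Modulo rearrangement, your observations (ii) and the splitting together reproduce exactly the paper's Lemma~\ref{3.4}, and your observation (i) is what the paper invokes implicitly when it says ``it readily follows.''

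The one structural difference is your Stage~1 horseshoe argument. It is correct --- the syzygy exact sequence $0\to\Syz^k_A(Q)\to\Syz^k_A(I)\to\Syz^k_A(A/I)^{\oplus(n-d)}\to0$, the vanishing of $\Syz^k_A(Q)$ for $k\ge d$ because $\pd_AQ=d-1$, and the uniqueness of syzygies up to free summands over a local ring, do yield $\Syz^{k+1}_A(A/I)\oplus A^{c_k}\cong\Syz^k_A(A/I)^{\oplus(n-d)}$, and the ``free Ulrich forces $I=Q$'' trick correctly kills $c_k$. But this stage is logically redundant: your Stage~2 argument works verbatim for every $i\ge d$, not just $i=d$, since for $i>d$ both $\Syz^{i-1}_{\overline A}(\overline A/\overline I)$ and $\Syz^{i}_{\overline A}(\overline A/\overline I)$ are still covered by the induction hypothesis over the $(d-1)$-dimensional ring $\overline A$. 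That is exactly how the paper proceeds: Lemma~\ref{3.4} gives the isomorphism $\Syz^i_A(A/I)/a\Syz^i_A(A/I)\cong\Syz^{i-1}_{\overline A}(A/I)\oplus\Syz^{i}_{\overline A}(A/I)$ for \emph{all} $i\ge1$, so a single induction on $d$ handles all $i\ge d$ at once. Dropping Stage~1 would make your argument identical in substance to the paper's and slightly shorter; as written it is a correct but more roundabout version of the same proof.
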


Theorem \ref{3.2} is proven by induction on $d$. Let $I$ be an arbitrary $\fkm$--primary ideal of a Cohen--Macaulay local ring $A$ of dimension $d \ge 0$ and assume that $I$ contains a parameter ideal $Q = (a_1, a_2, \ldots, a_d)$ as a reduction. We now suppose that $d > 0$ and put $a = a_1$. Let
$$\overline{A} = A/(a), \ \ \overline{I} = I/(a), \ \ \text{and}\ \  \overline{Q} = Q/(a).$$
We then have the following.

\begin{lem}\label{3.3}
If $I$ is an Ulrich ideal of $A$, then $\overline{I}$ is an Ulrich ideal of $\overline{A}$.
\end{lem}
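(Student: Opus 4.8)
The plan is to deduce everything from Lemma \ref{2.3}, which says that (under $I^2=QI$) being an Ulrich ideal is equivalent to $I/Q$ being a free module over $A/I$; both $I/Q$ and $A/I$ behave transparently when we kill the parameter $a=a_1$.

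First I would record the structural facts about passing to $\overline{A}$. Since $a=a_1$ is a member of the system of parameters $a_1,\ldots,a_d$ of the Cohen--Macaulay ring $A$, it is an $A$-nonzerodivisor, so $\overline{A}=A/(a)$ is Cohen--Macaulay of dimension $d-1$ and $\overline{Q}=(a_2,\ldots,a_d)\overline{A}$ is a parameter ideal of $\overline{A}$ with $\overline{Q}\subseteq\overline{I}$. Because $I$ is Ulrich we have $I^2=QI$ (Definition \ref{2.1}); reducing this identity modulo $(a)$ gives $\overline{I}^{\,2}=\overline{Q}\,\overline{I}$, so in particular $\overline{Q}$ is a reduction of $\overline{I}$ (with reduction number at most $1$). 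Hence $\overline{A},\overline{I},\overline{Q}$ satisfy both the standing hypotheses of the paper and the hypothesis $\overline{I}^{\,2}=\overline{Q}\,\overline{I}$ of Lemma \ref{2.3}.

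Next I would produce the required freeness. Applying Lemma \ref{2.3} to $A$, the assumption that $I$ is Ulrich gives that $I/Q$ is a free $A/I$-module. The third isomorphism theorem yields $\overline{I}/\overline{Q}=(I/(a))/(Q/(a))\cong I/Q$ and likewise $\overline{A}/\overline{I}\cong A/I$, so $\overline{I}/\overline{Q}$ is a free $\overline{A}/\overline{I}$-module. Applying Lemma \ref{2.3} now to $\overline{A}$ — legitimate since $\overline{I}^{\,2}=\overline{Q}\,\overline{I}$ — we conclude that $\overline{I}$ is an Ulrich ideal of $\overline{A}$.

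I do not expect a genuine obstacle: the entire content is that the criterion ``$I/Q$ is $A/I$-free'' is visibly unaffected by killing one of the parameters $a_i$. The only points needing a moment's care are that $\overline{Q}$ is still a parameter ideal of $\overline{A}$ (immediate, since $a_1,\ldots,a_d$ is an $A$-regular sequence) and that it is still a reduction of $\overline{I}$ (reduce $I^2=QI$ modulo $(a)$). If one instead preferred to argue straight from Definition \ref{2.1} via $\overline{I}/\overline{I}^{\,2}\cong(I/I^2)/A\overline{a_1}$, the one nontrivial input would be that $a_1,\ldots,a_d$ are part of a minimal generating system of $I$ — true because any reduction of the $\fkm$-primary ideal $I$ in the $d$-dimensional ring $A$ requires at least $d$ generators — but the route through Lemma \ref{2.3} sidesteps this.
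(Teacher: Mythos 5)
Your proof is correct and follows essentially the same route as the paper: reduce $I^2=QI$ modulo $a$, observe $\overline{I}/\overline{Q}\cong I/Q$ and $\overline{A}/\overline{I}\cong A/I$, and apply Lemma \ref{2.3} in both directions. You have merely spelled out the routine verifications (that $\overline{A}$ is Cohen--Macaulay, that $\overline{Q}$ is a parameter ideal and a reduction of $\overline{I}$) which the paper leaves implicit.
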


\begin{proof}
We have $\ol{I}^2 = \ol{Q}{\cdot}\ol{I}$, since $I^2 = QI$, while $\ol{I}/\ol{Q} = I/Q$ and $\ol{A}/\ol{I} = A/I$. Hence by Lemma \ref{2.3} $\ol{I}$ is an Ulrich ideal of $\ol{A}$, because $\ol{I}/\ol{Q}$ is $\ol{A}/\ol{I}$--free.
\end{proof}

\begin{lem}\label{3.4}
Suppose that $I/I^2$ is $A/I$-free.
Then 
$$ \Syz_A^i(A/I)/a \Syz_A^i(A/I) \cong \Syz_{\overline{A}}^{i-1}(A/I) \oplus \Syz_{\overline{A}}^{i}(A/I)$$
for all $i \geq 1$.
\end{lem}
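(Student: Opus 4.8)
The plan is to trace the minimal free resolution $\mathbb{F}_\bullet\colon \cdots\to F_i\overset{\partial_i}{\to}F_{i-1}\to\cdots\to F_0=A\to A/I\to 0$ after tensoring with $\overline{A}=A/(a)$, exploiting that $a\in I$ is $A$-regular while $a\cdot(A/I)=0$. Write $N_j=\Syz_A^j(A/I)=\Im\partial_j$, so that $N_0=A/I$, $N_1=I$, and there are short exact sequences $0\to N_{j+1}\to F_j\to N_j\to 0$ for $j\ge 0$. First I would compute $\Tor_1^A(N_j,\overline{A})$ from the resolution $0\to A\overset{a}{\to}A\to\overline{A}\to 0$: for $j\ge 1$ the module $N_j$ embeds in the free module $F_{j-1}$, hence is $a$-torsionfree and $\Tor_1^A(N_j,\overline{A})=(0:_{N_j}a)=0$, whereas $\Tor_1^A(A/I,\overline{A})=(0:_{A/I}a)=A/I$ since $a\in I$. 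Tensoring the short exact sequences with $\overline{A}$ (write $\overline{X}=X/aX$) therefore produces exact sequences $0\to\overline{N_{j+1}}\to\overline{F_j}\to\overline{N_j}\to 0$ for every $j\ge 1$, together with $0\to A/I\to\overline{N_1}\to\overline{F_0}\to A/I\to 0$ for $j=0$; the latter breaks into $0\to\overline{I}\to\overline{A}\to A/I\to 0$ and $0\to A/I\to\overline{N_1}\to\overline{I}\to 0$, where $\overline{I}=I/(a)$. The first of these exhibits $\overline{I}=\Syz^1_{\overline{A}}(A/I)$, since $\overline{A}\to A/I$ is a minimal cover.

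The key observation is that each surjection $\overline{F_j}\to\overline{N_j}$ ($j\ge 1$) is a minimal free cover over $\overline{A}$: because $\mathbb{F}_\bullet$ is minimal we have $N_{j+1}\subseteq\mathfrak{m}F_j$, hence $\overline{N_{j+1}}\subseteq\overline{\mathfrak{m}}\,\overline{F_j}$, so $\overline{N_{j+1}}\cong\Syz^1_{\overline{A}}(\overline{N_j})$. Iterating, $\overline{N_i}\cong\Syz^{i-1}_{\overline{A}}(\overline{N_1})$ for all $i\ge 1$, so the whole statement reduces to the case $i=1$, namely $\overline{N_1}=I/aI\cong(A/I)\oplus\overline{I}$. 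Granting this, additivity of syzygies over direct sums and the identity $\Syz^{i-1}_{\overline{A}}(\overline{I})=\Syz^{i-1}_{\overline{A}}(\Syz^1_{\overline{A}}(A/I))=\Syz^i_{\overline{A}}(A/I)$ give $\overline{N_i}\cong\Syz^{i-1}_{\overline{A}}(A/I)\oplus\Syz^i_{\overline{A}}(A/I)$, which is exactly the assertion.

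It remains to split $0\to A/I\to I/aI\to\overline{I}\to 0$, and this is the only place where the hypothesis that $I/I^2$ is $A/I$-free is used. A chase of the connecting homomorphism identifies the submodule $A/I\hookrightarrow I/aI$ with $(a)/aI$, the image of multiplication by $a$. Since $Q=(a_1,\dots,a_d)$ is a minimal reduction of the $\mathfrak{m}$-primary ideal $I$, its generators are linearly independent modulo $\mathfrak{m}I$, so $a=a_1\notin\mathfrak{m}I$ and the image $\overline{a}$ of $a$ in $I/I^2$ is part of a free $A/I$-basis; write $I/I^2=(A/I)\overline{a}\oplus M'$ with $M'$ free. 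As $a\in I$ we have $aI\subseteq I^2$, giving a surjection $\pi\colon I/aI\twoheadrightarrow I/I^2$ with kernel $I^2/aI$. I would then produce a section $\sigma$ of $\pi$ that sends $r\overline{a}\mapsto ra+aI$ on the summand $(A/I)\overline{a}$ (well defined and injective precisely because $a$ is $A$-regular) and lifts $M'$ arbitrarily (possible since $M'$ is projective over the Artinian ring $A/I$); then $I/aI=\sigma((A/I)\overline{a})\oplus\sigma(M')\oplus(I^2/aI)=(a)/aI\oplus\sigma(M')\oplus(I^2/aI)$, so $(a)/aI$ is a direct summand and the sequence splits. The main obstacle is precisely this last step — arranging the splitting of $\pi$ so that the abstract summand coming from projectivity of $I/I^2$ coincides with the concrete submodule $(a)/aI$ appearing in the exact sequence; everything else is formal manipulation of syzygies together with the elementary fact that reduction modulo $a$ preserves minimality of a free cover.
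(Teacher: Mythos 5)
Your overall strategy is the same as the paper's: reduce everything to the Vasconcelos splitting $I/aI\cong A/I\oplus\overline I$ and then propagate through the (minimal) reduction of the free resolution modulo $a$. Your Tor-based setup is a slightly more explicit version of the paper's exact sequence (3.5), and your observation that reducing the minimal resolution mod $a$ gives a minimal resolution of $I/aI$ over $\overline A$ (so that $\overline{N_i}=\Syz^{i-1}_{\overline A}(\overline{N_1})$) is exactly the mechanism the paper uses. You are also right that $a\notin\fkm I$, and that this is needed to make $\overline a$ part of a free $A/I$-basis of $I/I^2$; the paper uses this implicitly when writing $I=(x_1,\dots,x_n)$ with $x_1=a$ and $n=\mu_A(I)$.

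There is, however, a genuine gap in your splitting argument. You want a section $\sigma$ of $\pi\colon I/aI\to I/I^2$ and justify lifting $M'$ by ``projectivity of $M'$ over $A/I$.'' But $I/aI$ is not an $A/I$-module ($I\cdot(I/aI)=I^2/aI$ is generally nonzero), so $A/I$-projectivity gives no lifting along $\pi$; in fact a lift $\tilde\sigma\colon M'\to I/aI$ would need its generators to be killed by $I$, and the obvious lifts $\tilde x_i$ ($i\ge2$) only satisfy $I x_i\subseteq I^2$, not $Ix_i\subseteq aI$. Such a $\sigma$ need not exist. Fortunately you never needed one: since $\pi$ restricted to $(a)/aI$ is already an isomorphism onto the direct summand $(A/I)\overline a$ of $I/I^2$ (your well-definedness/injectivity check shows this), composing $\pi$ with the projection $I/I^2\twoheadrightarrow (A/I)\overline a$ and with $\bigl(\pi|_{(a)/aI}\bigr)^{-1}$ is a retraction of $(a)/aI\hookrightarrow I/aI$, which is all you need to split $0\to A/I\to I/aI\to\overline I\to 0$. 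This retraction is the map-theoretic form of the paper's direct verification that $A\overline{x_1}\cap\sum_{i\ge2}A\overline{x_i}=0$ in $I/aI$.
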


\begin{proof}
We look at the minimal free resolution 
$$\mathbb{F}_{\bullet} : \cdots \to F_i \overset{\partial_i}{\to} F_{i-1} \to \cdots \to F_1 \overset{\partial_1}{\to} F_0 = A \overset{\varepsilon}{\to} A/I \to 0$$
of $A/I$. Then, since $a$ is $A$--regular and $a{\cdot}(A/I) = (0)$, we get exact sequences
$$(3.5)\ \ \ \cdots \to F_i/aF_i \to F_{i-1}/aF_{i-1} \to \cdots \to F_1/aF_1  \to I/aI \to 0$$ and $0 \to A/I \overset{\varphi}{\to} I/aI \to \ol{A} \to A/I \to 0$ of $\ol{A}$--modules, where $\varphi (1) = \ol{a}$, the image of $a$ in $I/aI$. We claim that $\varphi$ is a split monomorphism. Namely

\begin{claim*}[Vasconcelos \cite{V}]\label{3.6}
$I/aI \cong A/I \oplus \ol{I}.$
\end{claim*}

\begin{proof}[Proof of Claim]
Let $n = \mu_A(I)$ and write $I=(x_1, x_2, \ldots, x_n)$ with $x_1 = a$. 
Then $I/aI=A \ol{x_1}+\sum_{i=2}^nA\ol{x_i}$, where $\ol{x_i}$ denotes the image of $x_i$ in $I/aI$. To see that $A \ol{x_1}\cap \sum_{i=2}^nA\ol{x_i} = (0)$, let $c_1, c_2, \ldots, c_n \in A$ and assume that $c_1\ol{x_1} + \sum_{i=2}^nc_i\ol{x_i} = 0$. Then, since $c_1a + \sum_{i=2}^nc_ix_i \in aI$, we have 
$(c_1 - y)a + \sum_{i=2}^nc_ix_i = 0$ for some $y \in I$. Now remember that $I/I^2 \cong (A/I)^{n}$. Hence the images of $x_i$~($1 \le i \le n$) in $I/I^2$ form an $A/I$--free basis of $I/I^2$, which shows $c_1 - y \in I$. Thus $c_1 \in I$, so that $c_1\ol{x_1}= 0=\sum_{i=2}^nc_i\ol{x_i}$ in $I/aI$. Hence $\varphi$ is a split monomorphism and $I/aI \cong A/I \oplus \ol{I}$.
\end{proof}

By (3.5) and Claim we get $\Syz_A^i(A/I)/a \Syz_A^i(A/I) \cong \Syz_{\overline{A}}^{i-1}(A/I) \oplus \Syz_{\overline{A}}^{i}(A/I)$ for $i \ge 2$. See Claim for the isomorphism in the case where $i = 1$.
\end{proof}

We are now in a position to prove Theorem \ref{3.2}.

\begin{proof}[Proof of Theorem $\ref{3.2}$]
Suppose that $I$ is an Ulrich ideal of $A$ with $\mu_A(I) > d$. If $d = 0$, then $I^2 = (0)$ and $I$ is $A/I$--free, so that $\Syz_A^i(A/I) \cong (A/I)^{n^i}$ for all $i \ge 0$, which are certainly Ulrich $A$--modules with respect to $I$. Let $d > 0$ and assume that our assertion holds true for $d-1$. Let $\ol{A}=A/(a)$ and $\ol{I} = I/(a)$, where $a = a_1$. Then $\ol{I}$ is an Ulrich ideal of $\ol{A}$ by Lemma \ref{3.3} and for all $i \ge d$ one has $\Syz_A^i(A/I)/a \Syz_A^i(A/I) \cong \Syz_{\overline{A}}^{i-1}(A/I) \oplus \Syz_{\overline{A}}^{i}(A/I)$ by Lemma \ref{3.4}. Because the right hand side of the above isomorphism is an Ulrich $\ol{A}$--module with respect to $\ol{I}$, it readily follows that $\Syz_A^i(A/I)$ is an Ulrich $A$--module with respect to $I$.
\end{proof}

\begin{rem} Vasconcelos \cite{V} proved that if a given ideal $I$ in a Noetherian local ring  $A$ has finite projective dimension and if $I/I^2$ is $A/I$--free, then $I$ is generated by an $A$--regular sequence. In his argument the key observation is Claim in Proof of Lemma \ref{3.4}, which shows every Ulrich ideal of finite projective dimension is a parameter ideal. Hence, inside regular local rings, Ulrich ideals are exactly parameter ideals.
\end{rem}

\section{Relations between Ulrich ideals and Ulrich modules}\label{reluium}
Let $(A,\fkm)$ be a Cohen--Macaulay local ring of dimension $d \ge 0$ and let $I$ be an $\fkm$--primary ideal of $A$ which contains a parameter ideal $Q = (a_1, a_2, \ldots, a_d)$ as a reduction. With this notation the converse of Theorem \ref{3.2} is also true and we have the following.

\begin{thm}[cf. {\cite[Proposition (2.5)]{BHU}}]\label{4.1}
The following conditions are equivalent.
\begin{itemize}
\item[(1)] $I$ is an Ulrich ideal of $A$ with $\mu_A(I) > d$.
\item[(2)] For all $i \ge d$, $\Syz_A^i(A/I)$ is an Ulrich $A$-module with respect to $I$.
\item[(3)] There exists an Ulrich $A$--module $M$ with respect to $I$ whose first syzygy module $\Syz_A^1(M)$ is an Ulrich $A$--module with respect to $I$.
\end{itemize}
When $d>0$, then we can add the following condition.
\begin{itemize}
\item[(4)] $\mu_A(I) > d$, $I/I^2$ is $A/I$-free, and $\Syz_A^i(A/I)$ is an Ulrich $A$-module with respect to $I$ for some $i \geq d$.
\end{itemize}
\end{thm}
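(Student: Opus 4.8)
The plan is to prove the cycle of implications $(1)\Rightarrow(2)\Rightarrow(3)\Rightarrow(1)$, and then treat the extra condition $(4)$ separately when $d>0$ by showing $(1)\Leftrightarrow(4)$. The implication $(1)\Rightarrow(2)$ is exactly Theorem \ref{3.2}, so nothing new is needed there. The implication $(2)\Rightarrow(3)$ is almost immediate: taking $M=\Syz_A^d(A/I)$, both $M$ and $\Syz_A^1(M)\cong\Syz_A^{d+1}(A/I)$ are Ulrich $A$-modules with respect to $I$ by $(2)$, provided one first checks $M$ is nonzero, i.e. that $A/I$ has infinite projective dimension --- which follows from the remark after Theorem \ref{3.2} (an Ulrich ideal of finite projective dimension would be a parameter ideal, contradicting $\mu_A(I)>d$). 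One should also record that $\mu_A(I)>d$ is part of hypothesis $(1)$, so that case is genuinely excluded.

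The heart of the argument is $(3)\Rightarrow(1)$. Here I would first reduce to the case of infinite residue field by the standard device of passing to $A[X]_{\fkm A[X]}$, which preserves all the relevant hypotheses and conclusions (Cohen--Macaulayness, the Ulrich conditions, freeness of syzygies, and $\mu$). With $A/\fkm$ infinite, $Q$ can be chosen to be a minimal reduction of $I$, and I would next induct on $d$. For $d=0$: if $M$ and $\Syz_A^1(M)$ are both Ulrich with respect to $I$, then $IM=0$ and $I\cdot\Syz_A^1(M)=0$; writing $F_0\to M\to 0$ a minimal cover with $\Syz_A^1(M)=\ker\subseteq\fkm F_0$, the condition $I\cdot\Syz_A^1(M)=0$ together with $IM=0$ forces $I^2F_0\subseteq I\cdot\Syz_A^1(M)=0$ hence $I^2=0$, and then freeness of $M/IM=M$ over $A/I$ plus a rank count against the minimal presentation gives that $I/I^2=I$ is $A/I$-free; so $I$ is Ulrich. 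For the inductive step $d>0$, choose $a=a_1$ a superficial element for $I$ which is also $M$-regular, set $\ol A=A/(a)$, $\ol I=I/(a)$; then $M/aM$ is a maximal Cohen--Macaulay $\ol A$-module, $\ol I\cdot(M/aM)=\ol Q\cdot(M/aM)$, and $(M/aM)/\ol I(M/aM)$ is $\ol A/\ol I$-free, so $M/aM$ is Ulrich over $\ol A$ with respect to $\ol I$; the key point is that $\Syz_A^1(M)/a\Syz_A^1(M)$ maps onto $\Syz_{\ol A}^1(M/aM)$, and using that $a$ is regular on $M$ one checks this is an isomorphism, so $\Syz_{\ol A}^1(M/aM)$ is also Ulrich over $\ol A$ with respect to $\ol I$. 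By induction $\ol I$ is an Ulrich ideal of $\ol A$, so $\ol I^2=\ol Q\,\ol I$, i.e. $I^2\subseteq Q+(a)I=QI$ (using $a\in Q$), whence $I^2=QI$; and $\ol I/\ol Q=I/Q$ is $\ol A/\ol I=A/I$-free, so by Lemma \ref{2.3} $I$ is an Ulrich ideal of $A$, and $\mu_A(I)>d$ because $I$ is not a parameter ideal (else $M$ would be free and $\Syz^1_A(M)=0$, not Ulrich).

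Finally, for the supplementary condition when $d>0$: $(1)\Rightarrow(4)$ is immediate from $(1)\Rightarrow(2)$ plus the fact that an Ulrich ideal has $I/I^2$ free. For $(4)\Rightarrow(1)$ I would argue that freeness of $I/I^2$ over $A/I$ makes Lemma \ref{3.4} available, giving $\Syz_A^i(A/I)/a\Syz_A^i(A/I)\cong\Syz_{\ol A}^{i-1}(A/I)\oplus\Syz_{\ol A}^{i}(A/I)$; since the left side is Ulrich over $\ol A$ with respect to $\ol I$ (reduce mod a superficial element as above), so is each summand, and then by $(3)\Rightarrow(1)$ applied over $\ol A$ (with $M=\Syz_{\ol A}^{i-1}(A/I)$, noting $i-1\ge d-1$) we get that $\ol I$ is an Ulrich ideal of $\ol A$; combined with $I/I^2$ being $A/I$-free and $\mu_A(I)>d$ this yields $I$ Ulrich by Lemma \ref{2.3} exactly as before. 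The main obstacle I anticipate is the isomorphism $\Syz_A^1(M)/a\Syz_A^1(M)\cong\Syz_{\ol A}^1(M/aM)$ in the inductive step --- one must be careful that passing to a minimal $A$-free cover of $M$ and reducing mod $a$ stays minimal and that no spurious free summands appear, which is where the regularity of $a$ on $M$ and the minimality of the resolution must be used with care.
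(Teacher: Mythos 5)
Your outline of $(1)\Rightarrow(2)\Rightarrow(3)$ and the $d=0$ base case of $(3)\Rightarrow(1)$ are fine, but the inductive step of $(3)\Rightarrow(1)$ contains a genuine gap. After obtaining from the inductive hypothesis that $\ol I$ is an Ulrich ideal of $\ol A$, you assert $\ol I^2=\ol Q\,\ol I\implies I^2\subseteq Q+(a)I=QI$. The displayed equality $Q+(a)I=QI$ is false (one has $(a)I\subseteq QI\subseteq Q$, so $Q+(a)I=Q\ne QI$); and what $\ol I^2=\ol Q\,\ol I$ actually gives, after lifting, is only $I^2\subseteq QI+(a)$. This yields $I^2\subseteq Q$ (since $a\in Q$), but \emph{not} $I^2=QI$: the defect lives in $(a)\cap I^2$, and there is no general reason for $(a)\cap I^2\subseteq aI$ before one knows more about $I$. (For example, with $A=k[[t^4,t^5,t^6]]$, $I=(t^4,t^5)$, $Q=(t^4)$ one has $\ol I^2=\ol Q\,\ol I$ yet $I^2\ne QI$.) This is precisely the difficulty the paper confronts in $(4)\Rightarrow(1)$ and resolves by invoking the \emph{extra} hypothesis that $I/I^2$ is $A/I$-free, which lets one write $y\in I^2\subseteq Q$ as $y=\sum_{i\le d}c_ia_i$ and read $c_i\in I$ off the free basis of $I/I^2$. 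In condition $(3)$ that hypothesis is not available, so your induction does not close.

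The paper avoids this entirely by proving $(3)\Rightarrow(1)$ directly and without induction (Lemma \ref{4.2}). From $0\to X\to F\to M\to 0$ with $X\subseteq\fkm F$, $M$ Ulrich and $X=\Syz_A^1(M)$ Ulrich, one tensors with $A/\q$ for any minimal reduction $\q$ of $I$ (after enlarging the residue field) and gets $X/\q X\cong(I/\q)^r$; Ulrichness of $X$ then gives $IX=\q X$, hence $I\cdot(I/\q)^r=0$, i.e.\ $I^2\subseteq\q$, and $X/IX=X/\q X$ free implies $(I/\q)^r$, hence $I/\q$, is $A/I$-free. Since this holds for \emph{every} minimal reduction $\q$, Proposition \ref{2.4} applies and $I$ is Ulrich, with $\mu_A(I)>d$ because $X\ne(0)$. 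This is the content you would need to supply; your reduction modulo a superficial element only gives the conclusion for reductions containing $a$, which is not enough to invoke Proposition \ref{2.4}, and does not by itself give $I^2=QI$. I'd recommend replacing the induction in $(3)\Rightarrow(1)$ by this direct argument; your use of it inside $(4)\Rightarrow(1)$ would then also be legitimate, provided you additionally record (as the paper does) that both summands $\Syz_{\ol A}^{i-1}(A/I)$ and $\Syz_{\ol A}^{i}(A/I)$ are nonzero, and that the passage from $\ol I$ Ulrich to $I^2=QI$ there uses the freeness of $I/I^2$ from hypothesis $(4)$ rather than Lemma \ref{2.3} directly.
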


The proof of Theorem \ref{4.1} is based on the following.

\begin{lem}\label{4.2}
Suppose that $ 0 \to X \to F \to Y \to 0 $
is an exact sequence of finitely generated $A$--modules such that
$F$ is free, $X \subseteq \fkm F$, and $Y$ is an Ulrich $A$--module with respect to $I$. Then the following conditions are equivalent.
\begin{itemize}
\item[(1)] $X= \Syz_A^1(Y)$ is an Ulrich $A$--module with respect to $I$.
\item[(2)] $I$ is an Ulrich ideal of $A$ with $\mu_A(I) > d$.
\end{itemize}
\end{lem}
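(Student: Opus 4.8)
The plan is to run a short exact-sequence argument, counting lengths and using the freeness condition (3) in the definition of Ulrich module to force the required splittings. Since $Y$ is an Ulrich $A$-module with respect to $I$, it is maximal Cohen--Macaulay, hence so is $X$ (being a syzygy of an MCM module over a Cohen--Macaulay ring, or directly from the exact sequence and depth counting), so condition (1) of the definition of Ulrich module is automatic for $X$. Tensoring the exact sequence $0 \to X \to F \to Y \to 0$ with $A/I$ gives
$$
\operatorname{Tor}_1^A(A/I, Y) \to X/IX \to F/IF \to Y/IY \to 0,
$$
and since $Y/IY$ is $A/I$-free the surjection $F/IF \to Y/IY$ splits, so $F/IF \cong Y/IY \oplus (X/IX / \operatorname{im})$; more precisely the sequence $0 \to \overline{X} \to F/IF \to Y/IY \to 0$ is exact where $\overline{X}$ is the image of $X/IX$, and it splits. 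The first thing I would do is set $r = \mu_A(Y)$, $s = \operatorname{rank}_A F$, so that $s \geq r$ and, because $X \subseteq \fkm F$, one has $\mu_A(X) = \ell_A(X/\fkm X)$ with $X \otimes_A A/\fkm \hookrightarrow F \otimes_A A/\fkm$ killed, giving $\mu_A(X) = s$... wait, rather $\mu_A(X) \le s$ in general; here $X/\fkm X \to F/\fkm F$ is \emph{zero} since $X\subseteq \fkm F$, so actually I should instead work with the MCM property and the equality $IY = QY$, $IX = QX$ directly.

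The cleaner route: since $X, F, Y$ are all MCM and $IY = QY$ (as $Y$ is Ulrich), reduce modulo $Q$. Because $Q$ is generated by an $A$-regular sequence which is also regular on $X$, $F$, $Y$, the sequence $0 \to X/QX \to F/QF \to Y/QY \to 0$ stays exact. Now $F/QF$ is $A/Q$-free, $Y/QY = Y/IY$ is $A/I$-free of rank $r$, and $X/QX$ is a submodule of a free $A/Q$-module. The key step I expect to carry the argument is: \textbf{compute $\ell_A(X/IX)$ from the exact sequence and compare with $\mu_A(X)\cdot \ell_A(A/I)$.} From $0 \to X/QX \to F/QF \to Y/QY \to 0$ we get $\ell_A(X/QX) = s\cdot \ell_A(A/Q) - r\cdot\ell_A(A/I)$, while $X \subseteq \fkm F$ forces the minimal number of generators $\mu_A(X)$ and hence $\ell_A(X/IX)$. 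One shows $X/IX$ is $A/I$-free (equivalently $X$ is Ulrich) \emph{iff} $\ell_A(X/QX) = \mu_A(X)\cdot\ell_A(A/I)$, and this numerical identity, unwound through the above, becomes equivalent to $\ell_A(I/Q) = (s-r)\ell_A(A/I)$ together with $I/Q$ being $A/I$-free — that is, to $I/Q$ being $A/I$-free of rank $\mu_A(I) - d$, which by Lemma \ref{2.3} is exactly the statement that $I$ is an Ulrich ideal with $\mu_A(I) > d$. (The role of $X \subseteq \fkm F$ is precisely to pin down $s$ in terms of $\mu_A(Y)$ and $\mu_A(X)$, via the minimality forced by the inclusion into $\fkm F$: one gets $s = \mu_A(Y) + \mu_A(X) - (\text{correction})$, and the correction vanishes under the freeness hypotheses.)

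For $(1)\Rightarrow(2)$ I would run this backwards: assuming $X$ is Ulrich, the length identity holds, hence $I/Q$ is $A/I$-free and nonzero (nonzero because $X \subseteq \fkm F$ prevents $s = r$ unless $X = 0$, and $X = 0$ would make $Y$ free, contradicting... actually one must check $X\ne 0$: if $X=0$ then $Y$ is free, but a free module is Ulrich only relative to a parameter ideal, and then there is nothing to prove or it's a degenerate case — I would handle $\mu_A(I) = d$ separately or note the hypothesis $X\subseteq\fkm F$ with $X\ne 0$). The main obstacle, and the step I'd spend the most care on, is the bookkeeping that relates $\operatorname{rank}_A F$, $\mu_A(X)$, and $\mu_A(Y)$ using $X \subseteq \fkm F$ — getting the exact relationship $\mu_A(X) + \mu_A(Y) = \operatorname{rank}_A F + (\text{number of free summands split off})$ right, and confirming that the "free summands split off" count is $0$ exactly when both $X/IX$ and the cokernel data are $A/I$-free. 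Everything else is length arithmetic over the Artinian ring $A/I$ plus two invocations of Lemma \ref{2.3}.
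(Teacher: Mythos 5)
Your starting point is the same as the paper's --- pass to the exact sequence $0 \to X/QX \to F/QF \to Y/QY \to 0$ --- but the rest of your proposal has several genuine gaps.

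First, a basic bookkeeping error that propagates: since $X \subseteq \fkm F$, the induced map $F/\fkm F \to Y/\fkm Y$ has zero kernel, so $s := \operatorname{rank}_A F = \mu_A(Y) =: r$ exactly; there is no ``correction term'' $s = \mu_A(Y) + \mu_A(X) - (\cdots)$. Your proposed criterion $\ell_A(I/Q) = (s-r)\ell_A(A/I)$ therefore collapses to $\ell_A(I/Q)=0$, which is false, so the arithmetic in the middle of your argument cannot be salvaged as written.

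Second, and more substantively, length counting alone does not give you what you need. The paper's proof turns on a \emph{structural} identification, not a numerical one: since $IY=QY$ the map $F/QF \to Y/QY$ factors through $F/IF$, and the induced surjection $F/IF \to Y/IY$ is a surjection of free $A/I$-modules of the same rank $r$, hence an isomorphism; thus the kernel $X/QX$ equals $IF/QF \cong (I/Q)^{r}$. This isomorphism --- which holds for \emph{every} minimal reduction $Q$ --- is what drives both directions. It immediately gives $IX = QX$ when $I^2 \subseteq Q$ (so $X/IX = X/QX$ is $A/I$-free of the right rank in $(2)\Rightarrow(1)$), and conversely it lets one read off $I^2 \subseteq Q$ and $I/Q$ free from the Ulrich condition on $X$. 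Your length identity $\ell_A(X/QX) = r\,\ell_A(I/Q)$ is a consequence of this isomorphism but does not imply it; two modules of the same length need not be isomorphic, and in particular you never establish $IX = QX$, which you would need to pass from $\ell_A(X/QX)$ to $\ell_A(X/IX)$.

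Third, in the direction $(1)\Rightarrow(2)$, Lemma \ref{2.3} is not enough: from $I^2 \subseteq Q$ together with $I/Q$ being $A/I$-free for one reduction $Q$ you cannot conclude $I^2=QI$ (Remark \ref{2.5} gives a counterexample). The paper instead enlarges the residue field if necessary, observes that the isomorphism $X/QX \cong (I/Q)^r$ holds for \emph{all} minimal reductions $Q$, and then invokes Proposition \ref{2.4}. Your proposal omits this step entirely.

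In short, the approach needs the module-theoretic identification $X/QX \cong (I/Q)^{\operatorname{rank}_A F}$, not just its length consequence, and the converse direction requires Proposition \ref{2.4} rather than Lemma \ref{2.3}.
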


\begin{proof} Since $Y$ is a maximal Cohen--Macaulay $A$--module, $X$ is also a maximal Cohen--Macaulay $A$--module if $X \ne (0)$. Look at the exact sequence
$ 0 \to X/QX \to F/QF \to Y/QY \to 0 $ and we get
$X/QX \cong (I/Q)^r$
where $r = \operatorname{rank}_AF > 0$, because $Y/QY \cong (A/I)^r$ and $X \subseteq \fkm F$. Remember that this holds true for any parameter ideal $Q$ of $A$ which is contained in $I$ as a reduction.

$(2) \Rightarrow (1)$
Since  $I^2 \subseteq Q$, we get $I{\cdot}(X/QX) = (0)$, so that $IX = QX$. Because $I/Q \cong (A/I)^{n-d}$ ~($n = \mu_A(I) > d$) by Lemma \ref{2.3}, $X \ne (0)$ and $X/IX = X/QX$ is a free $A/I$--module. Hence $X$ is an Ulrich $A$-module with respect to $I$.

$(1) \Rightarrow (2)$
Enlarging the residue class field of $A$ if necessary, we may assume that the field $A/\fkm$ is infinite. Since $X/IX \cong (I/Q)^r$ and $X/IX$ is $A/I$--free, we have $I^2 \subseteq Q$ and $I/Q$ is $A/I$--free. Thus $I$ is an Ulrich ideal of $A$ by Proposition \ref{2.4}. Notice that $I \ne Q$, because $(I/Q)^r \cong X/IX \ne (0)$.
Hence $\mu_A(I) > d$.
\end{proof}

Let us prove Theorem \ref{4.1}.

\begin{proof}[Proof of Theorem $\ref{4.1}$]
The implication $(1) \Rightarrow (2)$ follows by Theorem \ref{3.2}, while $(2) \Rightarrow (3)$ is obvious.
As for the implications $(2) \Rightarrow (4)$ and $(3) \Rightarrow (1)$, see Lemma \ref{4.2}.

Suppose that $d > 0$ and we will prove $(4) \Rightarrow (1)$. Let $a = a_1$ and put $\ol{A} = A/(a)$, $\ol{I} = I/(a)$, and $\ol{Q} = Q/(a)$. Then $\Syz_A^i(A/I)/a\Syz_A^i(A/I) \cong \Syz_{\ol{A}}^{i-1}(A/I) \oplus \Syz_{\ol{A}}^{i}(A/I)$ by Lemma \ref{3.4} and $\Syz_{\ol{A}}^{i-1}(A/I) \oplus \Syz_{\ol{A}}^{i}(A/I)$ is an Ulrich $\ol{A}$--module with respect to $\ol{I}$, since $\Syz_A^i(A/I)$ is an Ulrich $A$--module with respect to $I$. Therefore $\Syz_{\ol{A}}^{i-1}(A/I) \ne (0)$. If $\Syz_{\ol{A}}^{i}(A/I) = (0)$, then $\ol{I}$ has finite projective dimension, so that $\ol{I} = \ol{Q}$ (see Theorem \ref{3.2} or \cite{V}), which is impossible because $I \ne Q$. Hence $\Syz_{\ol{A}}^{i}(A/I) \ne (0)$. Consequently, $\ol{I}$ is an Ulrich ideal of $\ol{A}$, thanks to the implication $(3) \Rightarrow (1)$, and  hence $I^2 \subseteq Q$ and $I/Q$ is $A/I$--free, because $\ol{I}^2 = \ol{Q}{\cdot}\ol{I}$ and $\ol{I}/\ol{Q}$ is $\ol{A}/\ol{I}$--free. It is now easy to check that $I^2 = QI$. In fact, write $I = (x_1, x_2, \ldots, x_n)$~($n = \mu_A(I)$) with $x_i = a_i $ for $1 \le i \le d$. Then, for each $y \in I^2$, writing $y = \sum_{i=1}^dc_ix_i$ with $c_i \in A$, we see that $\sum_{i=1}^dc_i\ol{x_i} = 0$ in $I/I
 ^2$ where $\ol{x_i}$ denotes the image of $x_i$ in $I/I^2$. Consequently, $c_i \in I$ for all $1 \le i \le d$, because $\{\ol{x_i}\}_{1 \le i \le n}$ forms an $A/I$--free basis of $I/I^2$. Hence $y \in QI$, so that $I^2 = QI$, which shows $I$ is an Ulrich ideal of $A$.
\end{proof}

\begin{rem}(\cite[Example (2.6)]{BHU})
Suppose $d = 0$ and let $\ell = \min \{\ell \in \mathbb{Z} \mid \fkm^{\ell} = (0) \}$. Then $\fkm^{\ell - 1}$ is an Ulrich $A$--module with respect to $\fkm$, but if $\ell > 2$, $\fkm$ itself is not an Ulrich ideal of $A$, since $\fkm^2 \ne (0)$. Therefore an $\fkm$--primary ideal $I$ is not necessarily an Ulrich ideal of $A$, even though there exists an Ulrich $A$--module with respect to $I$. More precisely, let $A = k[[t]]/(t^3)$ where $k[[t]]$ is the formal power series ring over a field $k$. We look at the exact sequence
$0 \to \fkm^2 \to A \overset{t}{\to} A \to A/\fkm \to 0.$
Then $\fkm^2 = \Syz_A^2(A/\fkm)$ is an Ulrich $A$--module with respect to $\fkm$, but $\fkm$ is not an Ulrich ideal of $A$. This shows the implication $(4) \Rightarrow (1)$ in Theorem \ref{4.1} does not hold true in general, unless $ d = \dim A > 0$.
\end{rem}

\section{Duality}\label{dual}
Let $\rmK_A$ be the canonical module of $A$ (\cite{HK}). In this section we study the question of when the dual $M^\vee = \Hom_A(M, \rmK_A)$ of an Ulrich $A$--module $M$ is Ulrich. Our answer is the following.

\begin{thm}\label{5.1} Let $M$ be an Ulrich $A$--module with respect to $I$. Then the following conditions are equivalent.
\begin{enumerate}
\item[$(1)$] $M^{\vee} = \Hom_A(M, \rmK_A)$ is an Ulrich $A$--module with respect to $I$.
\item[$(2)$] $A/I$ is a Gorenstein ring.
\end{enumerate}
\end{thm}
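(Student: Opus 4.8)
The plan is to compute $M^{\vee}/QM^{\vee}$ explicitly and then read off both of the remaining Ulrich conditions for $M^{\vee}$ directly from that computation. Write $M/IM\cong(A/I)^r$; tensoring with $A/\fkm$ shows $r=\mu_A(M)$. Since $M$ is Ulrich with respect to $I$ we have $IM=QM$, hence $M/QM\cong(A/I)^r$ as an $A/I$-module. Moreover, because $M$ is a maximal Cohen--Macaulay $A$-module and $A$ possesses a canonical module, $M^{\vee}$ is again maximal Cohen--Macaulay and $M^{\vee\vee}\cong M$ (\cite{HK}); so condition $(1)$ in the definition of an Ulrich module holds automatically for $M^{\vee}$, and only conditions $(2)$ and $(3)$ need to be discussed.

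The heart of the argument is the isomorphism
$$M^{\vee}/QM^{\vee}\ \cong\ \rmK_{A/I}^{\,r},$$
where $\rmK_{A/I}$ is the canonical module of the Artinian ring $A/I$. I would obtain it by the following chain, each step using that $a_1,\dots,a_d$ is an $A$-regular sequence lying in $I$. First, since $M$ is maximal Cohen--Macaulay, $\Ext_A^i(M,\rmK_A)=0$ for all $i>0$; applying $\Hom_A(M,-)$ to the short exact sequences $0\to\rmK_A\xrightarrow{a_j}\rmK_A\to\rmK_A/a_j\rmK_A\to 0$ and using this vanishing repeatedly yields $M^{\vee}/QM^{\vee}\cong\Hom_A(M,\rmK_A/Q\rmK_A)$. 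Second, $\rmK_A/Q\rmK_A=\rmK_{A/Q}$ because $Q$ is generated by a regular sequence. Third, since $Q$ annihilates $\rmK_{A/Q}$, every $A$-homomorphism $M\to\rmK_{A/Q}$ factors through $M/QM$, so $\Hom_A(M,\rmK_{A/Q})=\Hom_{A/Q}(M/QM,\rmK_{A/Q})$. Finally, plugging in $M/QM\cong(A/I)^r$ and using $Q\subseteq I$ together with the identification $\Hom_{A/Q}(A/I,\rmK_{A/Q})\cong\rmK_{A/I}$ (the standard description of the canonical module of the quotient $A/I$ of the Artinian ring $A/Q$) gives the displayed isomorphism. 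This same chain works verbatim when $d=0$, with $Q=(0)$ and $\rmK_{A/Q}=\rmK_A$.

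With the isomorphism in hand, condition $(2)$ for $M^{\vee}$ is automatic: $\rmK_{A/I}$ is an $A/I$-module, hence annihilated by $I$, so $I\cdot(M^{\vee}/QM^{\vee})=0$, that is $IM^{\vee}\subseteq QM^{\vee}$; as $Q\subseteq I$ this forces $IM^{\vee}=QM^{\vee}$. Consequently $M^{\vee}/IM^{\vee}=M^{\vee}/QM^{\vee}\cong\rmK_{A/I}^{\,r}$, and $M^{\vee}$ is an Ulrich $A$-module with respect to $I$ if and only if $\rmK_{A/I}^{\,r}$ is a free $A/I$-module. Over the Artinian local ring $A/I$, the module $\rmK_{A/I}^{\,r}$ is free if and only if $\rmK_{A/I}$ itself is free (Krull--Schmidt), and since $\ell_A(\rmK_{A/I})=\ell_A(A/I)$, freeness forces $\rmK_{A/I}\cong A/I$, i.e.\ $A/I$ is Gorenstein; the converse implication is the same length count read backwards. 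This establishes $(1)\Leftrightarrow(2)$.

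The proof is largely bookkeeping with canonical modules. The one place that deserves care — and the only genuinely nontrivial input — is the first step of the chain above, namely that reduction modulo $Q$ commutes with $\Hom_A(M,-)$ applied to $\rmK_A$; this is precisely where maximal Cohen--Macaulayness of $M$ enters, through the vanishing $\Ext_A^i(M,\rmK_A)=0$ for $i>0$. Everything after that is formal, so I do not anticipate any serious obstacle; the remaining routine points are the compatibility of canonical modules with passage to $A/Q$ and to $A/I$, which are standard (\cite{HK}).
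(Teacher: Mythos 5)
Your proof is correct and follows essentially the same route as the paper: both arguments reduce to the isomorphism $M^{\vee}/QM^{\vee}\cong(\rmK_{A/I})^{\mu_A(M)}$ and then read off the Ulrich conditions for $M^{\vee}$, concluding via freeness of $\rmK_{A/I}$. The only difference is cosmetic — you derive $M^{\vee}/QM^{\vee}\cong\Hom_{A/Q}(M/QM,\rmK_{A/Q})$ from $\Ext$-vanishing rather than citing it (the paper invokes \cite[Korollar 6.3]{HK}), and you spell out the Krull--Schmidt/length-count argument at the end.
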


\begin{proof}
Notice  that $M^{\vee}$ is a maximal Cohen--Macaulay $A$--module (\cite[Satz 6.1]{HK}). Since $IM = QM$ and $M^{\vee}/QM^\vee \cong \Hom_{A/Q}(M/QM, \rmK_{A/Q})$
(\cite[Korollar 6.3]{HK}), we get $IM^\vee = Q M^\vee$, while
$$\Hom_{A/Q}(M/IM, \rmK_{A/Q})  \cong \Hom_{A/Q}(A/I, \rmK_{A/Q})^m \cong (\rmK_{A/I})^m $$
by \cite[Korollar 5.14]{HK}, because $M/IM \cong (A/I)^m$ where $m = \mu_A(M) > 0$. Hence
$M^\vee/IM^\vee = M^\vee/ Q M^\vee \cong (\rmK_{A/I})^m.$
Therefore $M^\vee$ is an Ulrich $A$--module with respect to $I$ if and only if $\rmK_{A/I}$ is a free $A/I$--module, that is $A/I$ is a Gorenstein ring.
\end{proof}

As an immediate consequence of Corollary \ref{2.6} and Theorem \ref{5.1}, we get the following, where $M^* =\Hom_A(M,A)$ for each $A$--module $M$.

\begin{cor}\label{5.2}
Suppose that $A$ is a Gorenstein ring and let $I$ be an Ulrich ideal of $A$. Let $M$ be a maximal Cohen--Macaulay $A$--module. Then the following are equivalent.
\begin{enumerate}
\item[$(1)$] $M^*$ is an Ulrich $A$--module with respect to $I$.
\item[$(2)$] $M$ is an Ulrich $A$--module with respect to $I$.
\end{enumerate}
\end{cor}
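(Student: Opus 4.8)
The plan is to reduce everything to Theorem \ref{5.1} by exploiting the fact that over a Gorenstein ring the canonical module is free. First I would record the two inputs. Since $A$ is Gorenstein, $\rmK_A \cong A$, and hence $M^\vee = \Hom_A(M,\rmK_A) \cong \Hom_A(M,A) = M^*$ for every finitely generated $A$--module $M$; that is, canonical duality and $A$--duality coincide here. Next, by Corollary \ref{2.6}(b) (the equivalence $(1) \Leftrightarrow (3)$), the hypothesis that $I$ is an Ulrich ideal of the Gorenstein ring $A$ forces the Artinian ring $A/I$ to be Gorenstein as well.

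For the implication $(2) \Rightarrow (1)$, I would simply apply Theorem \ref{5.1} to $M$: since $A/I$ is Gorenstein, $M^\vee$ is an Ulrich $A$--module with respect to $I$, and $M^\vee \cong M^*$ by the first observation.

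For $(1) \Rightarrow (2)$, I would run the same argument one step further. Assuming $M^*$ is an Ulrich $A$--module with respect to $I$, it is in particular maximal Cohen--Macaulay, so Theorem \ref{5.1} applies to $M^*$, and $A/I$ being Gorenstein yields that $(M^*)^\vee = M^{**}$ is an Ulrich $A$--module with respect to $I$. Finally, $M$ is maximal Cohen--Macaulay over $A$, which is Cohen--Macaulay with canonical module $\rmK_A = A$, so the biduality map $M \to M^{\vee\vee} = M^{**}$ is an isomorphism (\cite[Satz 6.1]{HK}); therefore $M$ itself is an Ulrich $A$--module with respect to $I$.

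The argument is essentially a bookkeeping combination of Corollary \ref{2.6} and Theorem \ref{5.1}, so there is no serious obstacle; the only points needing a word of justification are the two standard facts that make Theorem \ref{5.1} applicable in both directions, namely that the dual $M^*$ of a maximal Cohen--Macaulay module is again maximal Cohen--Macaulay and that maximal Cohen--Macaulay modules over a Gorenstein local ring are reflexive. Both are immediate from \cite[Satz 6.1]{HK}, and no computation beyond this is required.
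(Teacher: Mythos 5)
Your argument is correct and is precisely the unpacking of the paper's one-line justification (``an immediate consequence of Corollary \ref{2.6} and Theorem \ref{5.1}''): identify $(-)^\vee$ with $(-)^*$ via $\rmK_A\cong A$, deduce $A/I$ Gorenstein from Corollary \ref{2.6}, apply Theorem \ref{5.1} for $(2)\Rightarrow(1)$, and combine Theorem \ref{5.1} with reflexivity of maximal Cohen--Macaulay modules for $(1)\Rightarrow(2)$. The only point worth flagging is that Corollary \ref{2.6}(b) is stated under the hypothesis that $I$ is \emph{not} a parameter ideal; in the excluded case $I=Q$ one has $A/I=A/Q$ Gorenstein directly (as $A$ is Gorenstein and $Q$ is a parameter ideal), so the same two applications of Theorem \ref{5.1} still go through, but it would be cleaner to note this boundary case rather than invoke Corollary \ref{2.6} unconditionally.
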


Suppose that $A$ is a Gorenstein ring and let $M$ be a maximal Cohen--Macaulay $A$--module with a minimal free resolution
$ \cdots \to F_i \to \cdots \to F_2 \overset{\partial_2}{\to} F_1 \overset{\partial_1}{\to} F_0 \to M \to 0.$
Let $\Syz_A^1(M) = \operatorname{Im}\partial_1$ and put $\operatorname{Tr}M = \operatorname{Coker}\partial_1^*$, the Auslander transpose  of $M$. Then we get the presentation $$0 \to M^* \to F_0^* \overset{\partial_1^*}{\to} F_1^* \to \operatorname{Tr}M \to 0$$ of $\operatorname{Tr}M$, so that $[\operatorname{Tr}M]^* =  \Syz_A^2(M)$. Because the dual sequence
$$0 \to M^* \to F_0^* \overset{\partial_1^*}{\to} F_1^* \overset{\partial_2^*}{\to} F_2^* \to \cdots \to F_i^* \to \cdots $$
is exact, $\operatorname{Tr}M$ is a maximal Cohen--Macaulay $A$--module, if $\operatorname{Tr}M \ne (0)$, that is the case where $M$ is not free. Notice that 
$$M^* = \Syz_A^2(\operatorname{Tr}M),$$ if $M$ contains no direct summand isomorphic to $A$.

With this notation, when $A$ is a Gorenstein ring, we can modify Lemma \ref{4.2} in the following way.

\begin{cor}\label{5.3}
Suppose that $A$ is Gorenstein and let $I$ be an Ulrich ideal of $A$ which is not a parameter ideal. Let $M$ be a maximal Cohen--Macaulay $A$--module and assume that $M$ contains no direct summand isomorphic to $A$. Then the following are equivalent.
\begin{enumerate} 
\item[$(1)$] $M$ is an Ulrich $A$--module with respect to $I$.
\item[$(2)$] $M^*$ is an Ulrich $A$--module with respect to $I$.
\item[$(3)$] $\Syz_A^1(M)$ is an Ulrich $A$--module with respect to $I$.
\item[$(4)$] $\operatorname{Tr}M$ is an Ulrich $A$--module with respect to $I$.
\end{enumerate}
\end{cor}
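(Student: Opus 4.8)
The plan is to deduce the chain of equivalences from the tools already assembled, chiefly Corollary \ref{5.2}, Lemma \ref{4.2}, and the standard duality between $M^*$, $\Syz_A^1$, and $\operatorname{Tr}$. The equivalence $(1)\Leftrightarrow(2)$ is immediate: it is exactly Corollary \ref{5.2} applied to $M$ (a Gorenstein ring is its own canonical module, so $M^*=M^\vee$), and no hypothesis on direct summands is needed there. So the real content is relating $(1)$ to $(3)$ and $(4)$.

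For $(3)\Leftrightarrow(1)$ I would invoke Lemma \ref{4.2} with the short exact sequence $0\to\Syz_A^1(M)\to F_0\to M\to 0$ coming from the minimal free resolution of $M$: here $F=F_0$ is free, $\Syz_A^1(M)\subseteq\fkm F_0$ by minimality, and $Y=M$ is Ulrich by hypothesis in the direction $(1)\Rightarrow(3)$, so Lemma \ref{4.2} gives that $\Syz_A^1(M)$ is Ulrich (using that $I$ is not a parameter ideal, which forces $\mu_A(I)>d$ so that condition (2) of that lemma holds). Conversely, if $\Syz_A^1(M)$ is Ulrich, then Lemma \ref{4.2} tells us $I$ is an Ulrich ideal with $\mu_A(I)>d$ — but this does not by itself give that $M$ is Ulrich, so here I would instead argue directly: from the exact sequence $0\to\Syz_A^1(M)/Q\Syz_A^1(M)\to (A/Q)^r\to M/QM\to 0$ (with $r=\rank_A F_0$) and the fact that $\Syz_A^1(M)/Q\Syz_A^1(M)\cong(I/Q)^r$, one reads off that $M/QM$ is $A/Q$-free of rank... no — more cleanly, pass to $\operatorname{Tr}$: since $M$ has no free summand, $M^*=\Syz_A^2(\operatorname{Tr}M)=\Syz_A^1(\Syz_A^1(\operatorname{Tr}M))$, and applying the already-established direction $(1)\Rightarrow(3)$ twice (plus $(1)\Leftrightarrow(2)$) will let me run the implications around the cycle. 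Concretely I would prove the cycle $(1)\Rightarrow(3)\Rightarrow(4)\Rightarrow(2)\Rightarrow(1)$, so that each arrow is a single application of an earlier result.

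For $(3)\Rightarrow(4)$: recall $\operatorname{Tr}M=\operatorname{Coker}\partial_1^*$ fits in $0\to M^*\to F_0^*\to F_1^*\to\operatorname{Tr}M\to 0$, hence $\Syz_A^1(\operatorname{Tr}M)=\Im\partial_1^*\cong\operatorname{Coker}(M^*\to F_0^*)$; dualizing the sequence $0\to\Syz_A^1(M)\to F_0\to M\to 0$ and using Corollary \ref{5.2}/\ref{5.1} to see that $(-)^*$ preserves the Ulrich property among MCM modules, $\Syz_A^1(M)$ Ulrich implies $(\Syz_A^1(M))^*$ Ulrich; identifying $(\Syz_A^1(M))^*$ with a syzygy of $\operatorname{Tr}M$ and re-applying Lemma \ref{4.2} (now in the direction producing an Ulrich module) yields $\operatorname{Tr}M$ Ulrich. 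Then $(4)\Rightarrow(2)$ is again $(1)\Rightarrow(3)$ read off $\operatorname{Tr}M$ together with $M^*=\Syz_A^2(\operatorname{Tr}M)$ and the fact that a second syzygy of an Ulrich module over a ring with an Ulrich ideal is Ulrich (Theorem \ref{4.1}, or iterate Lemma \ref{4.2}), and $(2)\Rightarrow(1)$ closes the loop by Corollary \ref{5.2}.

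The step I expect to be the main obstacle is the bookkeeping in $(3)\Rightarrow(4)$: one must be careful that $\operatorname{Tr}M$ has no free direct summand (so that the identification $M^*=\Syz_A^2(\operatorname{Tr}M)$ and the repeated use of Lemma \ref{4.2} are legitimate), and that dualizing a minimal presentation of $M$ indeed gives a minimal presentation of $\operatorname{Tr}M$ over the Gorenstein ring $A$ — this uses that $M$ is MCM so $\Ext_A^{>0}(M,A)=0$, making the dual complex exact. Once the minimality and no-free-summand points are pinned down, every arrow is a one-line citation of Lemma \ref{4.2}, Corollary \ref{5.2}, or the duality $M^*=\Syz_A^2(\operatorname{Tr}M)$, and the cycle closes.
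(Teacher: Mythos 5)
Your overall roadmap is sound, and you have correctly pulled out the three key tools: Corollary \ref{5.2}, Lemma \ref{4.2}, and the identity $M^\ast \cong \Syz_A^2(\operatorname{Tr}M)$. The arrows $(1)\Leftrightarrow(2)$, $(1)\Rightarrow(3)$, $(4)\Rightarrow(2)$, $(2)\Rightarrow(1)$ are all fine as you state them. The genuine gap is in $(3)\Rightarrow(4)$, and you have mischaracterized it as ``bookkeeping.''

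After you identify $\bigl(\Syz_A^1(M)\bigr)^\ast\cong\Syz_A^1(\operatorname{Tr}M)$ and see that it is Ulrich, you then say that ``re-applying Lemma \ref{4.2} (now in the direction producing an Ulrich module) yields $\operatorname{Tr}M$ Ulrich.'' Lemma \ref{4.2} has no such direction: its hypothesis is that $Y$ is Ulrich, and under that hypothesis it tells you whether $\Syz_A^1(Y)$ is Ulrich. It never lets you pass from ``$\Syz_A^1(Y)$ is Ulrich'' to ``$Y$ is Ulrich.'' That upward step is exactly the content of the paper's proof of $(3)\Rightarrow(1)$, and it requires a real idea beyond citing the lemma. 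The paper's device is: starting from $X:=\Syz_A^1(M)$ Ulrich, dualize $0\to X\to F_0\to M\to 0$ to get $0\to M^\ast\to F_0^\ast\to X^\ast\to 0$; then $X^\ast$ is Ulrich by Corollary \ref{5.2}, so $\Syz_A^1(X^\ast)$ is Ulrich by Lemma \ref{4.2} \emph{applied in its legitimate direction} (with $X^\ast$ in the role of $Y$); finally $M^\ast\cong\Syz_A^1(X^\ast)\oplus A^r$ because the dualized surjection $F_0^\ast\to X^\ast$ need not be a minimal cover, and the hypothesis that the reflexive module $M$ has no free summand forces $r=0$, whence $M^\ast\cong\Syz_A^1(X^\ast)$ is Ulrich and $M\cong M^{\ast\ast}$ is Ulrich by Corollary \ref{5.2} once more. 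The reflexivity-plus-no-free-summand argument to kill the free summand $A^r$ is the missing ingredient in your sketch; without it, one cannot rule out that the Ulrich module $\Syz_A^1(X^\ast)$ differs from $M^\ast$ (equivalently $\Syz_A^1(\operatorname{Tr}M)$ differs from the first syzygy of $\operatorname{Tr}M$ computed from a minimal cover) by a free module, and then the conclusion about $\operatorname{Tr}M$ does not follow. Once you incorporate this argument, either your cycle $(1)\Rightarrow(3)\Rightarrow(4)\Rightarrow(2)\Rightarrow(1)$ closes, or you can simply prove $(3)\Rightarrow(1)$ directly as the paper does and keep $(1)\Rightarrow(4)$, $(4)\Rightarrow(2)$ separately.
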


\begin{proof}
(1) $\Leftrightarrow$ (2) See Corollary \ref{5.2}.

(1) $\Rightarrow$ (3) See Lemma \ref{4.2}. 

(3) $\Rightarrow (1)$
Let $X = \Syz_A^1(M)$ and look at the presentation  $0 \to X \to F_0 \to M \to 0$ of $M$ such that $F_0$ is a finitely generated free $A$--module and $X \subseteq \fkm F_0$. Take the $A$--dual and we get the exact sequence $0 \to M^* \to F_0^* \to X^* \to 0.$ Then by Corollary \ref{5.2}, $X^*$ is an Ulrich $A$--module with respect to $I$. Therefore $\Syz_A^1(X^*)$ is by Lemma \ref{4.2} an Ulrich $A$--module with respect to $I$, if $\Syz_A^1(X^*) \ne (0)$. On the other hand, because $M^* \cong \Syz_A^1(X^*) \oplus A^r$ for some $r \ge 0$ and because the reflexive $A$--module $M$ contains no direct summand isomorphic to $A$, we have $M \cong [\Syz_A^1(X^*)]^*$. Hence  by Corollary \ref{5.2}, $M$ is an Ulrich $A$--module with respect to $I$.

(1) $\Rightarrow (4)$ 
Because $M$ is an Ulrich $A$--module with respect to $I$, $[\operatorname{Tr}M]^* =  \Syz_A^2(M)$ is by Lemma \ref{4.2}  an Ulrich $A$--module with respect to $I$. Hence $\operatorname{Tr}M$ is by Corollary \ref{5.2}  an Ulrich $A$--module with respect to $I$.

(4) $\Rightarrow (2)$ This follows from Lemma \ref{4.2}, since $M^* = \Syz_A^2(\operatorname{Tr}M)$.
\end{proof}

\section{Ulrich ideals of numerical semigroup rings}\label{uinsr}
It seems interesting to ask, in a given Cohen--Macaulay local ring $A$, how many Ulrich ideals are contained, except parameter ideals. If $A$ is regular, we have nothing (\cite{V}), but in general cases the research is still in progress and we have no definitive answer. Here let us note a few results in a rather special case, that is the case where $A$ is a numerical semigroup ring over a field.

Let $k$ be a field. Let $a_1,  a_2, \ldots, a_\ell > 0 $~($\ell \ge 1$) be integers with $\operatorname{GCD}(a_1, a_2, \ldots, a_\ell) = 1$. We put $$H = \langle a_1, a_2, \ldots, a_\ell \rangle = \left\{\sum_{i=1}^\ell c_ia_i\,\Bigg|\,0 \le c_i \in \mathbb{Z}\right\}$$ which is  the numerical semigroup generated by $a_i's$. Let  
$$A = k[[t^{a_1}, t^{a_2},\ldots, t^{a_\ell}]] \ \ \subseteq \ \ k[[t]],$$
where $V = k[[t]]$ is the formal power series ring over $k$. Then the numerical semigroup ring $A$ of $H$ is a one-dimensional complete local integral domain with $V$ the normalization. Let 
${\mathcal X}_A^g$ denote the set of  Ulrich ideals of $A$ which are not  parameter ideals of $A$ but generated by monomials in $t$. 
We then have the following.

\begin{thm}\label{6.1}
The set ${\mathcal X}_A^g$ is finite.
\end{thm}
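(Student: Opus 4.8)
The plan is to show that an Ulrich ideal $I \in \mathcal{X}_A^g$ is severely constrained by a numerical inequality, so that only finitely many monomial ideals can qualify. First I would recall from Example \ref{2.7}(3) and the surrounding discussion that in a one-dimensional analytically irreducible local ring, every $\fkm$-primary ideal admits a principal reduction; here, since $I$ is generated by monomials $t^{b_1}, \dots, t^{b_n}$ with $b_1 < b_2 < \cdots < b_n$, the element $a := t^{b_1}$ generates a minimal reduction $Q = (t^{b_1})$ of $I$. By Corollary \ref{2.6}(a)(i) we have $Q : I = I$, and by Lemma \ref{2.3} the $A/I$-module $I/Q$ is free of rank $n - 1 \geq 1$, so $\ell_A(I/Q) = (n-1)\,\ell_A(A/I)$.

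Next I would extract the key boundedness. Since $I/I^2$ is $A/I$-free of rank $n$ and $I^2 = QI = t^{b_1}I$, the containment $I^2 \subseteq Q \subseteq I$ combined with $\ell_A(A/Q) = \rme_Q^0(A) = b_1$ gives $\ell_A(A/I) < b_1 \le $ (the Frobenius number of $H$ plus $1$)\,$+1$; more to the point, $b_1 \in H$ and $b_1 \le \rme(A)$ is not automatic, so instead I would argue directly on exponents. The condition $Q : I = I$ forces, for each generator $t^{b_j}$, that $b_j - b_1 \notin H$ whenever $t^{b_j} \notin Q$ — equivalently the ``gaps'' $b_j - b_1$ all lie outside $H$, hence are bounded by the Frobenius number $\FC(H)$ of $H$. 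Therefore $b_j \le b_1 + \FC(H)$ for all $j$. Combined with the further constraint coming from $I^2 = t^{b_1}I$ (which says $b_i + b_j - b_1 \in \{b_1, \dots, b_n\} + H$ for all $i,j$, forcing in particular $2(b_j - b_1) \in H$), one sees that once $b_1$ is fixed there are only finitely many admissible tuples. So it remains to bound $b_1$.

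To bound $b_1$: I would use that $A/I$ is a quotient of $A/Q$, and $\ell_A(A/Q) = b_1$ while $\ell_A(A/I) = b_1 - (n-1)\ell_A(A/I)/(n-1)\cdots$ — cleaner: from $\ell_A(I/Q) = (n-1)\ell_A(A/I)$ and $\ell_A(I/Q) = \ell_A(A/Q) - \ell_A(A/I) = b_1 - \ell_A(A/I)$ we get $n\,\ell_A(A/I) = b_1$, so $b_1 = n\,\ell_A(A/I) \ge n$. This does not yet bound $b_1$, but it shows $n \mid b_1$ and $\ell_A(A/I) = b_1/n$. Now the monomials not in $I$ are $n$-coset representatives among $\{t^h : h \in H,\ h < (\text{large})\}$; since $t^{b_1}, \ldots, t^{b_n} \in I$ and $b_1$ is the smallest, all $h \in H$ with $h \ge b_1$ satisfying a certain non-membership pattern are controlled. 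The decisive point is that $b_1$ itself must be a \emph{minimal} generator exponent, so $b_1 \notin \{b_i + H\text{-nonzero}\}$, and more importantly $b_1 - a_i \notin H$ for none of this helps; so here I would instead invoke that $t^{b_1}A \subseteq I$ with $I$ monomial and $I \neq A$ means $b_1 \ge $ the smallest nonzero element $a_1$ of $H$, and that $b_j - b_1 \notin H$ for $j\ge 2$ together with $n = \mu_A(I) \le \rmr(A) + 1$ (Corollary \ref{2.6}(a)) bounds $n$. Then $b_1 = n\ell_A(A/I)$ and $\ell_A(A/I) \le \ell_A(A/\fkm^s)$ for the reduction exponent $s$ — and the reduction exponent of an Ulrich ideal is exactly $2$, so $\ell_A(A/I) \le \ell_A(A/\fkm^2) \le 1 + \rmv(A)$, which is a constant. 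Hence $b_1 \le (\rmr(A)+1)(1 + \rmv(A))$ is bounded, and the remaining exponents $b_2, \dots, b_n$ are bounded by $b_1 + \FC(H)$. The number of monomial ideals with all generator-exponents below an explicit bound is finite, so $\mathcal{X}_A^g$ is finite.

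\textbf{Main obstacle.} The delicate step is bounding $b_1 = \min\{h : t^h \text{ a generator of } I\}$; the naive bounds on the other $b_j$ via the Frobenius number are immediate once $b_1$ is known, but $b_1$ can a priori be large. I expect the cleanest route is to pin down that the reduction number of an Ulrich ideal with respect to its principal reduction is $1$ (indeed $I^2 = QI$), hence $\ell_A(A/I) = \rme_I^0(A)/1$-type estimates force $\ell_A(A/I)$ to be small — specifically $\ell_A(A/I) \le \rme(A)$ since $I \supseteq \fkm^{?}$ fails in general, so one must instead use $I \supseteq Q$ and $\rme_I^0(A) = \rme_Q^0(A) = \ell_A(A/Q)$ together with $\rme_I^0(A) = (n-1+1)\ell_A(A/I) = n\ell_A(A/I)$, giving $b_1 = \ell_A(A/Q) = n\,\ell_A(A/I)$; then bounding $n$ by $\rmr(A)+1$ and $\ell_A(A/I)$ by a semigroup-combinatorial argument (the non-gaps below $b_1$ that survive in $A/I$) closes the loop. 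Making that last count fully rigorous — i.e., that $\ell_A(A/I)$ cannot exceed an explicit function of $H$ — is where the real work lies.
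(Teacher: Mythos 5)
Your reduction of the problem is correct up to a point: from the exact sequence $0\to I/Q\to A/Q\to A/I\to 0$ together with $I/Q\cong(A/I)^{n-1}$ and $\ell_A(A/Q)=\rme^0_{(t^{b_1})}(A)=b_1$, one indeed gets $b_1=n\cdot\ell_A(A/I)$, and Corollary \ref{2.6}(a) bounds $n\le\rmr(A)+1$. You also correctly observe that the remaining exponents satisfy $b_j-b_1\notin H$ for $j\ge2$, hence are bounded by $b_1+\FC(H)$. So the whole proof hinges on bounding $\ell_A(A/I)$ in terms of $A$ alone.

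That is exactly where your argument breaks. The proposed bound $\ell_A(A/I)\le\ell_A(A/\fkm^2)\le 1+\rmv(A)$ presupposes $\fkm^2\subseteq I$, which you yourself flag as dubious and which is in fact false: in $A=k[[t^8,t^{15}]]$ (Example \ref{6.8}(2)) the ideal $I=(t^{56},t^{60})$ lies in $\calX_A^g$, yet $\fkm^2=(t^{16},t^{23},t^{30})\not\subseteq I$ and $\ell_A(A/I)=b_1/n=28$, far exceeding $1+\rmv(A)=3$. ``Reduction exponent $2$'' (i.e.\ $I^2=QI$) gives no containment of $\fkm$-powers in $I$. So the decisive numerical bound is missing, and the proof is incomplete at precisely the step you identify as the main obstacle.

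The device you are missing is the embedding
\[
I/Q \hookrightarrow IV/Q = t^{b_1}V/t^{b_1}A \cong V/A,
\]
valid because $Q=(t^{b_1})$ is a reduction of $I$ and hence $IV=t^{b_1}V$. This gives $\ell_A(I/Q)\le\ell_A(V/A)=\sharp(\N\setminus H)=:q$, a bound depending only on $H$. Since $\ell_A(I/Q)\le q$ forces $\fkm^q\cdot(I/Q)=0$, and since $I/Q\cong(A/I)^{n-1}$ with $n>1$, one concludes $\fkm^q\subseteq I$. Thus every $I\in\calX_A^g$ is a monomial ideal sandwiched between $\fkm^q$ and $A$, and there are only finitely many such. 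This replaces your false bound $\ell_A(A/I)\le1+\rmv(A)$ with the correct one $\ell_A(A/I)\le\ell_A(I/Q)\le\sharp(\N\setminus H)$, and it also short-circuits the whole $b_1=n\,\ell_A(A/I)$ bookkeeping: once $\fkm^q\subseteq I$ the finiteness is immediate, without any need to invoke Corollary \ref{2.6} or the Frobenius number.
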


\begin{proof}
Let $I \in {\mathcal X}_A^g$ and put $a = \min \{h \in H \mid t^h \in I \}$. Then $Q = (t^a)$ is a reduction of $I$, since $t^aV = IV$. Therefore $I^2 = t^aI$.
As $I/Q\subseteq IV/Q=t^aV/t^aA\cong V/A$, we have $\ell_A(I/Q) \le \ell_A(V/A)= \sharp (\mathbb{N} \setminus H),$
which yields $\fkm^q {\cdot} (I/Q) = (0)$ where $q = \sharp (\mathbb{N} \setminus H)$. Therefore $\fkm^q \subseteq I$, since $I/Q \cong (A/I)^{n-1}$ by Lemma \ref{2.3} where $n = \mu_A(I) > 1$. Thus the set ${\mathcal X}_A^g$ is finite, because the set $\{h \in H \mid t^h \not\in \fkm^q \}$ is finite.
\end{proof}

Let us examine the following example.

\begin{ex}\label{6.2} ${\mathcal X}_{k[[t^3, t^5, t^7]]}^g = \{\fkm \}$.
\end{ex}

\begin{proof} We put $A = k[[t^3, t^5, t^7]]$. 
As $\fkm^2 = t^3\fkm$, we get $\fkm \in {\mathcal X}_{A}^g$. Let $I \in  
{\mathcal X}_{A}^g$. Then $1 < \mu_A(I) \le 3 = \rme_\fkm^0(A)$ (\cite{S2}). Suppose that $\mu_A(I) = 2$ and write $I = (t^a, t^b)$ with $a, b \in H$, $a < b$. Then $Q = (t^a)$ is a reduction of $I$ and $I/Q = (\ol{t^b}) \cong A/I$, where $\ol{t^b}$ denotes the image of $t^b$ in $I/Q$. Hence $I =(t^h \mid h \in H, h + (b-a) \in H),$
as $(t^a) : t^b = I$. Therefore,
 since $H \ni c$ for all $c \ge 5$, we get $t^3, t^5, t^7 \in I$ if $b - a \ge 2$, so that $I ~= \fkm$. This is impossible, because $\mu_A(\fkm) = 3$. If $b -a =1$, then $I =(t^5, t^6, t^7)$, which is also impossible. Hence $\mu_A(I) \ne 2$.

Let $I = (t^a, t^b, t^c)$ with $a,b,c \in H$ such that $a < b < c$. We put $Q = (t^a)$. Then $I/Q = (\ol{t^b}, \ol{t^c}) \cong (A/I)^2.$ Hence $(t^a) : t^c = I$, so that $I = (t^h \mid h \in H, h + (c-a) \in H)$. Because $c-a \ge 2$, we see $t^3, t^5, t^7 \in I$, whence $I = \fkm$ as is claimed.
\end{proof}

 When $A$ is a Gorenstein ring, that is the case where the semigroup $H$ is symmetric, we have the following characterization of Ulrich ideals generated by monomials.

\begin{thm}\label{6.3}
Suppose that $A=k[[t^{a_1},t^{a_2}, \ldots, t^{a_\ell}]]$ is a Gorenstein ring and let 
$I$
be an ideal of $A$. Then the following conditions are equivalent.
\begin{enumerate}
\item[$(1)$] $I \in {\mathcal X}_A^g$.
\item[$(2)$] $I=(t^a, t^b)$ ($a,b\in H, a<b$) and if we put $c=b-a$, the following conditions hold. 
\begin{enumerate}
\item[$\mathrm{(i)}$] $c \not\in H$, $2c \in H$,
\item[$\mathrm{(ii)}$] the numerical semigroup $H_1 = H + \left< c \right>$ is symmetric, and
\item[$\mathrm{(iii)}$] $a = \min \{h \in H \mid h + c \in H\}$.
\end{enumerate}
\end{enumerate}
\end{thm}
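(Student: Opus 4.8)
The plan is to exploit the characterization of Ulrich ideals in a one-dimensional Gorenstein ring provided by Corollary~\ref{2.6}: since $A$ is Gorenstein, an ideal $I$ lies in $\calX_A^g$ precisely when $I$ is a good ideal of $A$ (that is, $I^2=QI$ and $Q:I=I$ for a reduction $Q=(t^a)$) with $\mu_A(I)=d+1=2$, and $I$ is generated by monomials. So I would first reduce, via Corollary~\ref{2.6}, the statement $(1)\Leftrightarrow(2)$ to: $I=(t^a,t^b)$ with $a<b$ in $H$ is a good monomial ideal with $(t^a):t^b=I$ and $I^2=t^aI$ if and only if $\mathrm{(i)}$, $\mathrm{(ii)}$, $\mathrm{(iii)}$ hold. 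Writing $c=b-a$, the condition $(t^a):I=I$ becomes $(t^a):t^b=I$, i.e. $\{h\in H\mid h+c\in H\}=\{h\in H\mid t^h\in I\}$; and since $I=(t^a,t^b)$ and $b=a+c$, the right-hand set is $\{h\in H\mid h\ge a\}\cup(a+\langle c\rangle+H\text{-shifts})$, which one checks equals $a+H_1$ where $H_1=H+\langle c\rangle$. This is the computational heart: translating the good-ideal conditions into semigroup language.

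Next I would handle the conditions one at a time. For $\mathrm{(iii)}$: the equality $(t^a):t^b=I$ forces $a$ to be the least element $h\in H$ with $h+c\in H$ — indeed if some smaller $h_0\in H$ had $h_0+c\in H$ then $t^{h_0}\in(t^a):t^b$ but $t^{h_0}\notin I$, a contradiction; conversely minimality of $a$ makes $(t^a):t^b$ have its monomials exactly indexed by $\{h\mid h+c\in H\}=a+H_1$, which I must then match with the monomials in $I$. For $\mathrm{(i)}$: the requirement $c\notin H$ is equivalent to $I\ne(t^a)$, i.e. $\mu_A(I)=2$ rather than $1$ (if $c\in H$ then $t^b\in(t^a)$); and $2c\in H$ is forced by $I^2=t^aI$ because $t^{2b}=t^{2a}t^{2c}\in I^2=t^aI$ means $t^{2b-a}=t^{a+2c}\in I$, hence $2c\in H_1$, but combined with the structure one gets $2c\in H$ (this needs $H_1$ symmetric; I expect to prove $\mathrm{(i)}$ and $\mathrm{(ii)}$ somewhat in tandem). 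For $\mathrm{(ii)}$: the key is that $A/I\cong k[[t^{h}:h\in H_1]]/(\text{something})$ — more precisely, the good-ness condition $Q:I=I$ with $\mu_A(I)=2$ says by Corollary~\ref{2.6}(b) that $A/I$ is Gorenstein, and I would identify $A/I$ (or rather a ring governing it) with the numerical semigroup ring $k[[t^h\mid h\in H_1]]$ modulo a parameter, whose Gorenstein property is exactly symmetry of $H_1$.

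The main obstacle I anticipate is the bookkeeping in the equivalence $I=(t^a):t^b \Leftrightarrow (a+H_1$ indexes the monomials of $I)$, together with verifying that $I^2=t^aI$ is automatic once $\mathrm{(i)}$ holds and $a$ is minimal — this last point will likely require showing $2b-a\in a+H_1$, i.e. $2c\in H_1$, and then upgrading $2c\in H_1$ to $2c\in H$ using that $c$ is the unique "new" generator and $H_1$ is symmetric. A secondary subtlety is the direction $(2)\Rightarrow(1)$: given $\mathrm{(i)}$–$\mathrm{(iii)}$ I must reconstruct $I=(t^a,t^b)$, verify it is $\fkm$-primary (clear, since $\mathrm{GCD}$ of $H$ is $1$ and $a\in H$), verify $I^2=t^aI$ and $(t^a):I=I$ from scratch, then invoke Lemma~\ref{2.3} or Corollary~\ref{2.6} to conclude $I\in\calX_A^g$; the non-parameter condition $\mu_A(I)>1$ is exactly $c\notin H$. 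I would organize the write-up as a sequence of short claims (one per semigroup translation) so that the Gorenstein-symmetry dictionary is invoked cleanly at the end.
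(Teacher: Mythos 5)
Your reduction to Corollary~\ref{2.6} and the translation of $\mu_A(I)=2$ into $c\notin H$ and of $I^2=QI$ (given $c\notin H$) into $2c\in H$ are both on the right track and match the paper. The genuine gap is in your treatment of condition $\mathrm{(ii)}$ and, bound up with it, of the equality $Q:I=I$. You propose to ``identify $A/I$ with the numerical semigroup ring $k[[t^h\mid h\in H_1]]$ modulo a parameter.'' This identification is false: writing $B=k[[t^h\mid h\in H_1]]$, we have $I=t^aB$, so $B/I=B/t^aB$ is the semigroup ring of $H_1$ modulo a parameter, and $A/I$ is a \emph{proper subring} of $B/I$, not a quotient of $B$. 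Consequently ``$A/I$ Gorenstein'' is not the same assertion as ``$B$ (equivalently $B/I$) Gorenstein,'' and your route never actually connects the Gorenstein property of $A/I$ (or the Ulrich condition) to symmetry of $H_1$.

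What actually makes the equivalence work in the paper is the overring $B$ viewed as a fractional ideal of $A$, together with two identities you do not invoke: first, because $I=t^aB$ and $Q=t^aA$, one has $Q:I=A:B$ (the conductor of $A$ in $B$); and second, by \cite[Satz 5.22]{HK} (using that $A$ is Gorenstein), $A:B\cong\rmK_B$. In the direction $(1)\Rightarrow(2)$ this gives $\rmK_B\cong A:B=Q:I=I=t^aB\cong B$, so $B$ is Gorenstein, i.e.\ $H_1$ is symmetric. In the direction $(2)\Rightarrow(1)$ the same identity shows $J:=Q:I=A:B$ is a principal $B$-module $fB$; combining $J=fB$, $J=(t^h\mid h\in H,\ h+c\in H)$, and $\mathrm{(iii)}$ (which forces $t^aV=JV$, so $(t^a)$ is a reduction of $J$) yields $J=t^aB=I$, hence $Q:I=I$. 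Without the conductor identity and the canonical-module fact there is no way to upgrade $\mathrm{(ii)}$--$\mathrm{(iii)}$ to $Q:I=I$, which is exactly the step you flag as a ``secondary subtlety'' but do not resolve; so as it stands the proposal does not go through.
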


\begin{proof}
$(1) \Rightarrow (2)$ 
We have $\mu_A(I) = 2$ (Corollary \ref{2.6}). Let us write $I = (t^a, t^b)$~($a, b \in H$, $a < b$) and put $Q = (t^a)$. 
Then $I^2 = QI$. 
Therefore $t^{2b} \in (t^{2a}, t^{a+b})$, whence $t^{2b} \in (t^{2a})$, because $t^b \not\in Q = (t^a)$. 
Thus $b-a \not\in H$ but $2(b-a) \in H$. 
We put  $c = b-a$ and let $$B = k[[t^{a_1}, t^{a_2} , \ldots, t^{a_{\ell}}, t^{c}]]$$ be the semigroup ring of $H_1 = H + \left<c \right>$. 
Then, since $2c \in H$, we see $B = A + At^c$, so that $t^aB = t^a A + t^b A = I$. Because $I/Q = t^aB/t^a A \cong B/A$ and $I/Q \cong A/I$, we have $I = A : B$. 
Hence $B$ is a Gorenstein ring, because $\rmK_B \cong A:B = I$ by \cite[Satz 5.22]{HK} and $I = t^aB$. 
Thus $H_1 = H + \left<c\right>$ is symmetric. 
Assertion (iii) is now clear, since $I = Q : I = (t^h \mid h \in S)$ and $IV = t^aV$ where $S=\{h \in H \mid h + c \in H\}$.

$(2) \Rightarrow (1)$
We put $Q = (t^a)$. 
Then $I^2 = QI$ and $I \ne Q$ by (i) and (ii). 
We must show $I = Q : I$. Let $B = A[t^c]$. 
Then, since $t^{2c} \in A$, we get $B = A + At^c$, so that $t^aB = I$. 
Hence $A:B = Q : I$. Let $J = A : B$. 
We then have $J = fB$, because $A : B \cong \rmK_B$ and $B = k[[t^{a_1}, t^{a_2} , \ldots, t^{a_{\ell}}, t^{c}]]$ is a Gorenstein ring by (ii). 
Hence $\frac{I}{t^a} = B = \frac{J}{f}.$
On the other hand, because $J=Q:I = (t^h \mid h \in S)$ where $S = \{h \in H \mid h + c \in H\}$, by (iii) $Q$ is a reduction of $J$ (notice that $t^a V = JV$), whence $J^2 = t^a J$ (remember that $J^2 = fJ$). 
Consequently
$\frac{J}{t^a} = \frac{J}{f} = \frac{I}{t^a},$
whence $I = J = Q : I$. 
Thus  $I/Q = (\ol{t^b}) \cong A/I$ where $\ol{t^b}$ is the image of $t^b$ in $I/Q$. Hence $I \in {\mathcal X}_A^g$ as claimed.
\end{proof}

\begin{cor}\label{6.5}
Let $a \ge 5$ be an integer. Then ${\mathcal X}_{k[[t^a, t^{a+1}, \ldots, t^{2a-2}]]}^g = \emptyset$.
\end{cor}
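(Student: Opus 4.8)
The plan is to reduce the statement to Theorem~\ref{6.3} and then to rule out, by a count of gaps, every possible value of the difference $b-a$ that occurs there. First I would record the arithmetic of the semigroup $H=\langle a,a+1,\dots,2a-2\rangle$ of $A$. Since the smallest sum of two positive elements of $H$ equals $2a$, we have $\N\setminus H=\{1,2,\dots,a-1\}\cup\{2a-1\}$; hence $H$ has exactly $a$ gaps and conductor $2a$, so $H$ is symmetric and $A$ is a Gorenstein ring. Therefore Theorem~\ref{6.3} is available.

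Next I would suppose, for contradiction, that $I\in\X_A^g$. By Theorem~\ref{6.3} (with $a,b$ there renamed to $\alpha,\beta$ to avoid clashing with the ring parameter $a$) we may write $I=(t^\alpha,t^\beta)$ with $\alpha,\beta\in H$, $\alpha<\beta$, and, putting $c=\beta-\alpha$, we have $c\notin H$, $2c\in H$, and $H_1=H+\langle c\rangle$ is symmetric; only these facts will be used (condition (iii) of Theorem~\ref{6.3} is not needed). Since $c$ is a gap of $H$, either $c=2a-1$ or $1\le c\le a-1$. In the latter case $2c$ is even and lies in $H=\{0\}\cup\{n\in\Z\mid n\ge a,\ n\ne 2a-1\}$, which forces $2c\ge a$, i.e. $\lceil a/2\rceil\le c\le a-1$. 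I will then check in both cases that $H_1$ is not symmetric, using that a symmetric numerical semigroup has even conductor and exactly (conductor)$/2$ gaps.

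If $c=2a-1$, then $c+H$ adds nothing to $H$ beyond the single element $2a-1$ (as $2(2a-1)=4a-2\ge 2a$ and $(2a-1)+h\ge 3a-1\ge 2a$ for $0\ne h\in H$), so $H_1=H\cup\{2a-1\}$ has $a-1$ gaps and conductor $a$; symmetry would require $a-1=a/2$, impossible for $a\ge 5$. If $\lceil a/2\rceil\le c\le a-1$, I would show the gap set of $H_1$ is exactly $\{1,\dots,a-1\}\setminus\{c\}$: among $1,\dots,a-1$ only $c$ itself lies in $c+H$ (the next element $c+a$ already exceeds $a-1$) and $2c+H$ consists of integers $\ge a$, while $2a-1\in H_1$ because $2a-1=c+(2a-1-c)$ with $a\le 2a-1-c\le\tfrac{3a}{2}-1<2a$, so that $2a-1-c\in\{a,\dots,2a-2\}\subseteq H$; and every integer $\ge a$ other than these already lies in $H$. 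Thus $H_1$ has $a-2$ gaps, with conductor $a$ when $c\le a-2$ and conductor $a-1$ when $c=a-1$; symmetry would force $a-2=a/2$ (hence $a=4$) or $a-2=(a-1)/2$ (hence $a=3$), both impossible for $a\ge 5$. In every case $H_1$ fails to be symmetric, contradicting Theorem~\ref{6.3}, so no such $I$ exists and $\X_A^g=\emptyset$.

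The step I expect to require the most care is the determination of the gap set of $H_1$ in the range $\lceil a/2\rceil\le c\le a-1$ --- precisely, verifying that the ``large'' gap $2a-1$ always re-enters $H_1$ while no ``small'' gap other than $c$ does --- together with keeping the boundary case $c=a-1$, in which the conductor of $H_1$ drops by one, separate from the generic case. The remaining computations are routine.
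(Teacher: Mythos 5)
Your proof is correct and follows essentially the same route as the paper's: invoke Theorem~\ref{6.3}, compute that $H_1\setminus H\subseteq\{c,2a-1\}$, and show $H_1$ cannot be symmetric for $a\ge5$. The paper dismisses the last step as ``routine to check''; you simply carry out that routine check carefully by counting gaps and computing conductors, separating the boundary case $c=a-1$, which is exactly the detail that needs care.
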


\begin{proof}
We put $H = \left<a, a+1, \ldots, 2a - 2 \right>$. Then $H$ is symmetric. 
Let $c \in \mathbb{Z}$. Assume that $c \not\in H$ but $2c \in H$ and put $H_1 = H + \left< c \right>$. Then $H_1 \setminus H = \{c, 2a-1\}$ and it is routine to check that $H_1$ is never symmetric, whence ${\mathcal X}_{k[[t^a, t^{a+1}, \ldots, t^{2a-2}]]}^g = \emptyset$ by Theorem \ref{6.3}.
\end{proof}

Using the characterization of Ulrich ideals of Theorem \ref{6.3}, we can determine all the Ulrich ideals 
of semigroups rings when $H$ is generated by $2$ elements.  For that purpose, we recall 
the following result of \cite{W}. 

\begin{lem}\label{Wat}\cite[Proposition 3]{W}
Let $H = \left<a, b, c \right>$ be a symmetric numerical 
semigroup generated minimally by $3$ integers.  Then changing the order of $a,b,c$ if necessary, we can write  $b = b'd, c= c'd$ where $d>1, \GCD(a,d)=1$ and $a\in \left< b', c' \right>$.    
\end{lem}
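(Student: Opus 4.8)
The plan is to deduce the lemma from the structure theory of complete intersection numerical semigroups. Throughout write $\{a,b,c\}=\{n_1,n_2,n_3\}$ and $R=k[[t^{n_1},t^{n_2},t^{n_3}]]$, a one-dimensional complete local domain. Since $H$ is minimally generated by the three elements $n_1,n_2,n_3$, the embedding dimension of $R$ is $3$, so $R\cong S/\fkp$ for a three-dimensional regular local ring $S=k[[X_1,X_2,X_3]]$ and the defining toric prime $\fkp=\ker(X_i\mapsto t^{n_i})$, which has height $2$. Because $H$ is symmetric, $R$ is Gorenstein (Kunz), hence $\fkp$ is a height-two perfect ideal with Gorenstein quotient; by Serre's theorem (equivalently, via Hilbert--Burch: a height-two perfect ideal that is not a complete intersection needs at least $3$ generators and is then the ideal of $2\times2$ minors of an $n\times(n{+}1)$ matrix with $n\ge 2$, whose cokernel-type canonical module is not cyclic) such an ideal is a complete intersection. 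Since it is the defining ideal of a monomial curve it is generated by binomials, so $\fkp=(f,g)$ with $f,g$ irreducible binomials.

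From this point the quickest route is to quote the classification of three-generated complete intersection numerical semigroups (Delorme, or Herzog's structure theorem): such an $H$ is a nontrivial gluing of $\mathbb{N}$ with a two-generated semigroup, so after reordering there exist $y,z$ with $\gcd(y,z)=1$ and integers $d\ge 2$, $a_0\ge 1$ with $\gcd(a_0,d)=1$ and $a_0\in\langle y,z\rangle$ such that $H=\langle a_0,\,dy,\,dz\rangle$. Reading off $a=a_0$, $b=dy$, $c=dz$, $b'=y$, $c'=z$ gives precisely $(\mathrm{i})$--$(\mathrm{iv})$: $b=b'd$, $c=c'd$, $\gcd(b',c')=1$, $\gcd(a,d)=1$, and $a\in\langle b',c'\rangle$.

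To make the argument self-contained one would re-derive this gluing shape directly from $\fkp=(f,g)$. The key structural point --- which is the substance of Herzog's theorem --- is that, since $\mu_S(\fkp)=2$, one of $f,g$ must be a binomial in only two of the variables, say $f=X_i^{u}-X_j^{v}$; then irreducibility gives $\gcd(u,v)=1$, and $f\in\fkp$ gives $u\,n_i=v\,n_j$, whence $u=n_j/d$ and $v=n_i/d$ with $d=\gcd(n_i,n_j)$. Writing $n_k$ for the third generator and normalizing $g$ to the binomial $X_k^{w}-X_i^{p}X_j^{q}$ that records the least positive multiple $w$ of $n_k$ lying in $\langle n_i,n_j\rangle$, one has $w\,n_k=p\,n_i+q\,n_j$; since $\gcd(n_k,d)=\gcd(n_1,n_2,n_3)=1$ one gets $d\mid w$, and analyzing when $X_k^{w}-X_i^{p}X_j^{q}$ is irreducible modulo $f$ (equivalently, when $\fkp=(f,g)$ is prime), together with the hypothesis that $H$ is minimally three-generated, forces $w=d$ and $d>1$. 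Dividing $w\,n_k=p\,n_i+q\,n_j$ by $d$ then gives $n_k\in\langle n_i/d,\,n_j/d\rangle$, and relabelling gives the statement.

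The step I expect to be the main obstacle is exactly this: establishing that in the complete intersection case one defining binomial is supported on only two variables, and, once that shape is fixed, identifying $w=\gcd(n_i,n_j)$ (so that the ``third'' generator really lies in the reduced two-generator semigroup) and showing this common divisor exceeds $1$. This is where minimality of the three-element generating set and primality of $\fkp$ enter in an essential way; by contrast, excluding the determinantal ($2\times2$-minors) alternative is comparatively routine once the Gorenstein hypothesis is available, and the passage between the ring-theoretic and semigroup-theoretic formulations is bookkeeping. If Delorme's or Herzog's classification is simply cited, the whole proof reduces to the chain: symmetric $\Rightarrow$ Gorenstein $\Rightarrow$ complete intersection $\Rightarrow$ gluing, which is the asserted normal form.
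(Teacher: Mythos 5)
The paper states this lemma as a quoted result from Watanabe (\cite[Proposition 3]{W}) and gives no proof, so there is no in-paper argument to compare against; Watanabe's original proof is an elementary combinatorial analysis of the symmetric semigroup $H$ itself. Your route---Kunz's criterion ($H$ symmetric iff $k[[H]]$ Gorenstein), the Serre/Bass theorem that a codimension-two Gorenstein ideal in a regular local ring is a complete intersection, and then Herzog's and Delorme's classification of three-generated complete intersection numerical semigroups as gluings---is correct and reaches the same normal form by genuinely different and heavier machinery. You are also right to flag that the ``self-contained'' sketch in your third paragraph is where the real work lies: showing that one defining binomial of the toric ideal is supported on only two of the variables, and that $d=\gcd(n_i,n_j)>1$, is precisely the content of Herzog's theorem, so quoting it (or Delorme) is unavoidable if you want a short proof. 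One small slip in the Hilbert--Burch aside: a codimension-two perfect ideal minimally generated by $n+1\ge3$ elements is the ideal of $n\times n$ maximal minors of an $n\times(n+1)$ matrix (for the three-generated semigroup case $n=2$, so $2\times2$ minors of a $2\times3$ matrix, which is presumably what you intended); the Cohen--Macaulay type is then $n\ge 2$, which excludes Gorenstein. Finally, note that the paper's application of Lemma~\ref{Wat} in the proof of Lemma~\ref{6.6} actually invokes $\gcd(b',c')=1$, which is not part of the lemma as quoted but does fall out of the gluing normal form you derive.
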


Next we determine the structure of $H_1= H + \left< c \right>$ in Theorem 
\ref{6.3} when $H=\left< a, b \right>$.

\begin{lem}\label{6.6} Let $H=\left< a, b \right>$ and $H_1= H + \left< c \right>$
 be symmetric numerical semigroups, where $a,b >1$ are relatively coprime integers
and $c$ is a positive integer satisfying $c\not\in H$ and $2c\in H$.     
Then after changing the order of $a,b$ if necessary,  one of the following cases occur.
\begin{enumerate}
\item[$(1)$]  $H=\left< 2, 2\ell +1 \right>$ and $c= 2m+1$ with $0\le m< \ell$,  
\item[$(2)$]  $a= 2c/d$, where $d= \GCD(b,c)$ is odd and $d \ge 1$. 
\item[$(3)$]  $a=2d$, where $d= \GCD(a,c)>1, c/d$ is odd and $1\le c/d < b$. 
 \end{enumerate}
\end{lem}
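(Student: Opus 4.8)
The plan is to split on $\nu$, the cardinality of a minimal generating set of $H_1 = \langle a,b,c\rangle$. Since $H = \langle a,b\rangle$ is generated by two coprime integers it is automatically symmetric, so the only genuine hypothesis is that $H_1$ is symmetric. Because $c \notin H$, the generator $c$ is always minimal in $H_1$, so $\nu \in \{1,2,3\}$. First I would dispose of $c = 1$ (equivalently $\nu = 1$): then $2 = 2c \in H$ forces $a = 2$ or $b = 2$, and after a swap $H = \langle 2, 2\ell+1\rangle$ with $c = 1 = 2{\cdot}0+1$, which is (1). So assume $c \ge 2$. Throughout, the allowed reordering of $a$ and $b$ is invoked freely to normalize which generator plays which role, and the relations $c \notin H$, $2c \in H$ are pushed through explicit linear equations over the semigroup.

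Case $\nu = 2$. One of $a,b$ is redundant; swapping if necessary, $H_1 = \langle b,c\rangle$ with $\GCD(b,c)=1$ and $a = \beta b + \gamma c$, where $\gamma \ge 1$ (else $b \mid a$, impossible since $\GCD(a,b)=1<b$). Writing $2c = \alpha a + \delta b$ and substituting for $a$ gives
\[
(2-\alpha\gamma)\,c \;=\; (\alpha\beta+\delta)\,b \;\ge\; 0 ,
\]
so $\alpha\gamma \le 2$. A short inspection of $\alpha\gamma \in \{0,1,2\}$ (using $\gamma \ge 1$, using that $\GCD(b,c)=1$, $b>1$ forbids $b \mid c$, and using $c \notin H$ to rule out $a = c$) leaves exactly two possibilities: $\alpha = 0$, which forces $b = 2$ and hence, from the parity structure of $\langle 2,a\rangle$, conclusion (1); or $a = 2c$ with $\GCD(b,c)=1$, which is (2) with $d = 1$.

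Case $\nu = 3$. Here Lemma \ref{Wat} applies to $H_1$: after reordering, two of $a,b,c$ equal $a_1 e$ and $c_1 e$ with $e > 1$, the third coprime to $e$ and lying in $\langle a_1, c_1\rangle$. As $\GCD(a,b)=1$, the factor $e$ divides $c$ and exactly one of $a,b$; swapping, $a = a_1 e$, $c = c_1 e$, $\GCD(b,e)=1$, $b = p a_1 + q c_1$. From $c \notin H$ one gets $c_1 \notin \langle a_1, b\rangle$ (whence $a_1, c_1 \ge 2$), and from $\GCD(b,e)=1$ the coefficient of $b$ in $2c = \alpha a + \beta b$ is divisible by $e$, giving $2c_1 = \alpha a_1 + \beta_1 b$. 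Substituting $b = p a_1 + q c_1$:
\[
(2-\beta_1 q)\,c_1 \;=\; (\alpha + \beta_1 p)\,a_1 \;\ge\; 0 ,
\]
so $\beta_1 q \le 2$. The options $\beta_1 q = 1$, $(\beta_1,q) = (2,1)$, and $q = 0$ each contradict one of $c_1 \notin \langle a_1\rangle$, $c \notin H$, $\GCD(a_1,b)=1$. There remain: $\beta_1 = 0$, where $2c_1 = \alpha a_1$ forces (writing $g = \GCD(a_1,c_1)$) $a_1 = 2g$ with $c_1/g$ odd, so $d := \GCD(a,c) = eg$ gives $a = 2d$, $c/d$ odd, $1 \le c/d < b$ — conclusion (3); and $(\beta_1,q) = (1,2)$, where $b = 2c_1$ and necessarily $e$ is odd, so $d := \GCD(b,c) = c_1$ gives $b = 2d$, $c/d = e$ odd, $1 \le c/d < a$ — conclusion (3) after the final swap of $a$ and $b$.

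The delicate part is the last paragraph: verifying in each terminal subcase the complete bulleted list — parity of $d$, $d > 1$, and above all the strict inequality $c/d < b$ — all of which ultimately reduce to excluding the degenerate pattern $b \mid c$ (i.e. $p = 0$, $q = 1$, $g = 1$), together with keeping careful track of which swap of $a, b$ is in force at the end. Everything else consists of elementary Diophantine case-splits of the same flavor as those already appearing in the proofs of Theorems \ref{6.1} and \ref{6.3}.
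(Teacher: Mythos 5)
Your proof is correct and follows essentially the same strategy as the paper's: a trichotomy on the number $\nu$ of minimal generators of $H_1$, with Lemma~\ref{Wat} invoked for $\nu=3$, and everything finished off by elementary Diophantine case-splits on a coefficient product (your $\alpha\gamma\le 2$ and $\beta_1 q\le 2$, the paper's $np\le 2$ and $np'\le 2$). The one genuine structural difference is in the $\nu=3$ case: the paper normalizes via Lemma~\ref{Wat} so that \emph{$b$} and $c$ share the factor, with $a$ coprime and expressed in $\langle b',c'\rangle$, whereas you normalize so that \emph{$a$} and $c$ share the factor $e$, with $b$ coprime and expressed in $\langle a_1,c_1\rangle$. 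This is a legitimate mirror of the same reduction, but it re-sorts which terminal shape appears where: the paper's two surviving subcases land in conclusions (2) and (3) respectively, while both of yours land in (3). There is no contradiction — for such $H_1$ conclusions (2) and (3) actually hold simultaneously (one checks that $a=2c'$ with $d=\GCD(b,c)$ odd also satisfies $a=2\GCD(a,c)$, $c/\GCD(a,c)$ odd, etc.) — but your route never produces case (2) with $d>1$, so a reader comparing against the statement should understand that you are only showing \emph{some} listed case holds, not reproducing the paper's case-by-case parametrization.

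Two small points worth tightening. First, your parenthetical ``(whence $a_1,c_1\ge 2$)'' attributes both bounds to $c_1\notin\langle a_1,b\rangle$; that gives $a_1\ge 2$, but $c_1\ge 2$ really comes from $\nu=3$ (if $c_1=1$ then $c=e$ divides $a$, making $a$ redundant), and you do need $c_1>1$ later to conclude $d>1$ in the $(\beta_1,q)=(1,2)$ subcase. Second, since Lemma~\ref{Wat} is applied with $d=\GCD$ of the two scaled generators, you have $\GCD(a_1,c_1)=1$ automatically, so the parenthetical ``$g=\GCD(a_1,c_1)$'' can simply be replaced by $g=1$; carrying the general $g$ does no harm but is a slight detour. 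With those adjustments the argument is complete.
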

\begin{proof}  If $H=\left< 2, 2\ell +1 \right>$, then obviously the case (1) occurs. 
Henceforth we assume $2\not\in H$ and $c\ne 1$.

If $H_1= H + \left< c \right>$ is generated by $2$ elements and $H_1=
\left< b, c\right>$,  we may assume 
 \begin{enumerate}
\item[$\mathrm{(i)}$] $a = mb + nc$ and 
\item[$\mathrm{(ii)}$] $2c = pa + qb$
\end{enumerate}
for some non-negative integers $m,n,p,q$.  From $\mathrm{(i)}$ and $\mathrm{(ii)}$
we get 
$$ 2a = 2mb + 2nc = npa + (2m +  nq)b.$$
Hence we must have $0\le np\le 2$ and if $np=1$, $a\in \left< b\right>$,  a contradiction. 
If $np=0$, since $a,b$ are relatively coprime,  we must have $b=2$, contradicting 
our hypothesis $2\not\in H$.  If $np=2$, then 
$m=q=0$ and $a= nc$  and since $np=2$ and $c \not\in H$, we must have $a=2c$.

Now, we assume that  $H_1$ is minimally generated  by $3$ elements and $a,b>2$.
Then by Lemma \ref{Wat}, we may assume 
$$b = b'd,\ c= c'd,$$ 
where $d>1, \GCD(a,d)=\GCD(b',c')=1$ and $a\in \left< b', c' \right>$. Then we have 

\begin{enumerate}
\item[$\mathrm{(i)}$] $a = mb' + nc'$ and 
\item[$\mathrm{(ii)}$] $2c = pa + qb$.
\end{enumerate}
We can put $p = p'd$ and $2c' = p'a + qb'$. Note that $n\ne 0$ since 
$a,b$ are relatively coprime. 
We have the following equality.
$$2a =   2mb' + 2nc' = np'a + (2m + nq) b'.$$  
Again, we must have $0\le np'\le 2$ and if $np'=1$, $a\in \left< b'\right>$,  a contradiction. 
If $np'=2$, then $m=q=0$ and we must have $a=2c'$, $(a,b,c) = (2c', b'd, c'd)$, with $d$ 
odd and $d>1$. \par 
If $p'=0$, then we have  $2c = qb$, or $2c' =  qb'$  and we must have $b'=2$.   Now, let us 
 interchange $a$ and $b$. Then $a=2d$ and $c=c'd$. Since $H_1$ is symmetric, 
 $b>c'$ by Lemma \ref{Wat}.  This is our case (3).  
\end{proof}

\begin{thm}\label{H=(a,b)}
Suppose that $A=k[[t^a,t^b]]$ with $\GCD(a,b)=1$. 
Then either $a$ or $b$ is even. 
\par
If we put $a=2d$ and $b= 2\ell+1$, then 
\[
 \chi_A^g=\left\{
(t^{ia},t^{db})
\right\}_{1 \le i \le \ell}.
\] 
\end{thm}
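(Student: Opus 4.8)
The plan is to combine the characterization of $\chi_A^g$ in Theorem \ref{6.3} with the explicit structural analysis of Lemma \ref{6.6}, specialized to the two-generated semigroup $H = \langle a, b\rangle$. First I would dispose of the parity statement: since $\GCD(a,b) = 1$, not both $a, b$ can be even; and if both were odd then by Theorem \ref{6.3}(2) any $I \in \chi_A^g$ would force an element $c = b' - a' \notin H$ with $2c \in H$ and $H + \langle c\rangle$ symmetric, but the statement to be proved asserts $\chi_A^g$ is the explicit nonempty list, so I must in fact show that when both $a,b$ are odd no such $c$ exists — this follows from running Lemma \ref{6.6}: in case (1) $H = \langle 2, 2\ell+1\rangle$ has an even generator; in case (2) $a = 2c/d$ is even; in case (3) $a = 2d$ is even. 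Hence in all three cases one of $a, b$ is even, so if $\GCD(a,b)=1$ and both were odd, $\chi_A^g = \emptyset$, contradicting the claimed formula only if the formula is nonempty — which it is, once we verify the displayed ideals do lie in $\chi_A^g$. So the parity dichotomy is really a byproduct of the main classification.

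With $a = 2d$ and $b = 2\ell+1$ fixed, the core of the argument is a two-way inclusion. For the "easy" direction I would take $I_i = (t^{ia}, t^{db})$ for $1 \le i \le \ell$ and verify conditions (i)--(iii) of Theorem \ref{6.3}(2) with the roles chosen so that the smaller exponent is $ia$ (note $ia = 2id \le 2\ell d < d(2\ell+1) = db$) and $c = db - ia = d(b - 2i) = d(2\ell+1-2i)$. Then $c \notin H$ since $c/d = 2\ell+1-2i$ is odd and positive and... one checks $c \notin \langle a,b\rangle$; and $2c = 2d(2\ell+1-2i)$, which should land in $H$ because $2c/d$ even times $d$... more precisely $2c = (2\ell+1-2i)\cdot a$ lies in $\langle a\rangle \subseteq H$. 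For (ii), $H_1 = H + \langle c\rangle = \langle a, b, c\rangle$ with $a = 2d$, $c = d(2\ell+1-2i)$, both divisible by $d$, and $\GCD(a/d, c/d)=\GCD(2, 2\ell+1-2i)=1$, $b$ coprime to $d$ — this is exactly the shape in Lemma \ref{Wat}, and one verifies $b \in \langle a/d, c/d\rangle = \langle 2, 2\ell+1-2i\rangle$ (true since $b = 2\ell+1$ is odd and $\ge 2\ell+1-2i$), so $H_1$ is symmetric. For (iii), $a = ia$ must be $\min\{h \in H \mid h + c \in H\}$; here the relevant semigroup $S = \{h \in H : h + c \in H\}$ and its minimum must be computed. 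The delicate point is identifying that minimum as exactly $ia$ and not something smaller, which uses that $H = \langle 2d, 2\ell+1\rangle$ and $c = d(2\ell+1-2i)$.

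For the "hard" direction — that every $I \in \chi_A^g$ is one of the $I_i$ — I would start from $I = (t^{\alpha}, t^{\beta})$ with $\alpha < \beta$ in $H$, set $c = \beta - \alpha$, and invoke Theorem \ref{6.3}(2): $c \notin H$, $2c \in H$, $H_1 = H + \langle c\rangle$ symmetric, and $\alpha = \min\{h \in H : h+c \in H\}$. Now apply Lemma \ref{6.6} to $H = \langle a,b\rangle = \langle 2d, 2\ell+1\rangle$: since $b$ is odd and $a$ even, we are forced into case (1) or case (3) of that lemma (case (2) would make $b = 2c/d$ even unless we have already swapped, but with the normalization $2 \notin H$, i.e. $d > 1$, only case (3) or the $\langle 2, \cdot\rangle$ case survive — I need to track the swaps carefully here, and that bookkeeping is the main obstacle). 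In case (3), $c = c' d$ with $c'$ odd, $1 \le c' < b = 2\ell+1$, so $c' = 2\ell+1-2i$ for a unique $i$ with $1 \le i \le \ell$, giving $c = d(2\ell+1-2i)$. Then condition (iii) pins down $\alpha = \min\{h \in H : h+c \in H\}$; I would show this minimum equals $ia$, so $\beta = \alpha + c = ia + d(2\ell+1-2i) = 2id + 2\ell d + d - 2id = d(2\ell+1) = db$, hence $I = (t^{ia}, t^{db}) = I_i$. When $d = 1$, i.e. $a = 2$, case (1) of Lemma \ref{6.6} applies directly and gives the same conclusion with $\ell$ as in $H = \langle 2, 2\ell+1\rangle$.

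The main obstacle I anticipate is twofold: first, the combinatorial determination of $\min\{h \in H : h + c \in H\}$ for $H = \langle 2d, 2\ell+1\rangle$ and $c = d(2\ell+1-2i)$ — showing it is exactly $ia = 2id$ requires a short but careful argument about which elements of $H$ remain in $H$ after translation by $c$ (equivalently, understanding the gap set $\mathbb{N}\setminus H_1$ and using symmetry of $H_1$); and second, the case-tracking in Lemma \ref{6.6} under the interchanges of $a$ and $b$ — one must be disciplined about which of the original generators is even and ensure the list produced is exactly $\{(t^{ia}, t^{db})\}_{1\le i\le\ell}$ with no duplicates and no omissions, in particular checking that distinct $i$ give distinct ideals (immediate from distinct generator exponents) and that $i$ ranges over precisely $1, \dots, \ell$ (the constraint $1 \le c' < b$ with $c'$ odd gives $\ell$ values).
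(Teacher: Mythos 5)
Your plan follows the same skeleton as the paper's proof: invoke Theorem \ref{6.3} to characterize monomial Ulrich ideals in terms of $c=\beta-\alpha$, then use Lemma \ref{6.6} to pin down the admissible values of $c$ (there are exactly $\ell$ of them, all of the form $c=d(2\ell+1-2i)$ after case (2) of the lemma is absorbed into cases (1)/(3)), and then exhibit $\ell$ Ulrich ideals to saturate the bound. Your treatment of the parity claim as a byproduct, your case bookkeeping in Lemma \ref{6.6}, and your observation that $c$ determines $I$ uniquely via condition (iii) of Theorem \ref{6.3} are all correct and match the paper in substance.

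Where you diverge is in how you verify that each $I_i=(t^{ia},t^{db})$ actually lies in $\chi_A^g$. You propose to check conditions (i)--(iii) of Theorem \ref{6.3} directly, which forces the combinatorial determination of $\min\{h\in H:h+c\in H\}$ for $H=\langle 2d,2\ell+1\rangle$ and $c=d(2\ell+1-2i)$ --- the step you flag as the main obstacle. The paper sidesteps this entirely: it represents $A\cong k[[X,Y]]/(X^{2\ell+1}-Y^{2d})$ with $x=t^a$, $y=t^b$, so $I_i=(x^i,y^d)$, $Q_i=(x^i)$, and then reads off $I_i^2=Q_iI_i$, $\mu_A(I_i)=2$, and $\rme_{I_i}^0(A)=2\,\ell_A(A/I_i)=2id$ by elementary length counts in $k[[X,Y]]/(X^i,Y^d)$; Ulrichness then drops out of Lemma \ref{2.3}/Corollary \ref{2.6}. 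This makes the ``easy'' direction genuinely easy and, combined with the upper bound of $\ell$ from Lemma \ref{6.6}, closes the argument without ever computing the minimum. Your route would work (the min computation is manageable using Ap\'ery representations with respect to $a=2d$: writing $h=pa+qb$ with $0\le q<a$ and translating by $c$ shows the minimizer is $ia$ when $q<d$ and $db>ia$ otherwise), but it is strictly more work; you should either do that computation honestly or adopt the paper's shortcut via the hypersurface presentation. One small remark: the paper's in-proof statement of case (ii) carries a stray condition ``$\GCD(a,c')=1$'' that is not imposed by Lemma \ref{6.6}(3) and would undercount (e.g.\ $a=6,\,b=7,\,c'=3$ gives a genuine Ulrich ideal $(t^{12},t^{21})$ despite $\GCD(6,3)=3$); your reading of Lemma \ref{6.6}, which drops that condition, is the correct one.
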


\begin{proof}
Let $I=(t^{\alpha},t^{\beta}) \in \chi_A^g$ with $c=\beta-\alpha > 0$. 
We have shown in Lemma \ref{6.6} that $a$ or $b$ is even.  
\par
In what follows, we put $a=2d$ and $b=2\ell+1$. 
By Theorem \ref{6.3} and Lemma \ref{6.6}, 
the number $c $ is determined as one of the following cases.
\begin{enumerate}
\item[$\mathrm{(i)}$] $c= ad/2$, where 
$d$ is a proper divisor of $b$ (including $d=1$) and  
\item[$\mathrm{(ii)}$]  $c= ac' /2$
where $c'$ is an odd integer with $\GCD(a,c')=1, c'< b$ and 
$\alpha=\min \{h \in H \mid h + c \in H\}$. 
\end{enumerate}
But we can easily see that the case (i) is included in the case (ii).  
In particular, $\sharp \chi_A^g=\ell$. So it is enough to show that 
$(t^{ia},t^{db})$ is an Ulrich ideal for every $i=1,\ldots,\ell$. 
Indeed, since $A \cong k[[X,Y]]/(X^{2\ell+1}-Y^{2d})$, if we put 
$I_i=(t^{ia},t^{db})=(x^i,y^d)$ and $Q_i=(t^{ia})=(x^i)$, then 
we have $I_i^2=Q_iI_i$, $e_{I_i}^0(A)=2 \cdot \ell_A(A/I_i)=2id$ and 
$\mu_A(I_i)=2$ for every $i$. 
Hence $I_i$ is an Ulrich ideal by Corollary \ref{2.6}, as required.  
\end{proof}

\begin{ex}\label{6.8} The following assertions hold true.
\begin{enumerate}
\item[$(1)$] ${\mathcal X}_{k[[t^3, t^5]]}^g = \emptyset$.
\item[$(2)$] If $A=k[[t^8,t^{15}]]$, then $c= \beta -\alpha$
is one of the integers $4,12,20,28,36,44$ and $52$. Hence 
${\mathcal X}_{A}^g =  \{ ( t^{8i}, t^{60})\}_{1\le i\le7}$.
\item[$(3)$] ${\mathcal X}_{k[[t^4, t^6, t^{4\ell -1}]]}^g = \{(t^4, t^6), (t^{4\ell - 4}, t^{4 \ell - 1}),(t^{4(\ell - q) - 6}, t^{4\ell - 1}), (t^{4(\ell - q) - 8}, t^{4\ell - 1})\}_{0 \le q \le \ell - 3},$
where $\ell \ge 2$. 
\end{enumerate}
\end{ex}



\section{Structure of minimal free resolutions of Ulrich ideals}\label{minfree}
Let $(A, \fkm)$ be a Cohen--Macaulay local ring of dimension $d \ge 0$ and let $I$ be an $\fkm$--primary ideal of $A$ which contains a parameter ideal $Q=(a_1,a_2,\cdots,a_d)$ as a reduction. The purpose of this section is to explore the structure of minimal free resolutions of Ulrich ideals.

Throughout this section, we assume that $I$ is an Ulrich ideal of $A$. Let 
$${\mathbb{F}}_{\bullet}: \cdots \to F_i \overset{{\partial}_i}{\to} F_{i-1} \to \cdots \to F_1 \overset{{\partial}_1}{\to} F_0=A \overset{\varepsilon}{\to} A/I \to 0$$
be a minimal free resolution of the $A$--module $A/I$.
We put $\beta_i=\rank_AF_i = \beta_i^A(A/I)$, the $i$-th Betti number of $A/I$, and $n = \mu_A(I) = \beta_1 \ge d$.

We begin with  the following.

\begin{thm}\label{7.1}
One has $A/I \otimes_A \partial_i=0$ for all $i \geq 1$, and 
$$
\beta_i= \left\{
\begin{array}{ll}
(n-d)^{i-d}{\cdot}(n-d+1)^d & (d \le i),\\
\binom{d}{i}+(n-d){\cdot}\beta_{i-1} & (1 \leq i \leq d),\\
1 & (i=0).
\end{array}
\right.
$$
Hence $\beta_i=\binom{d}{i}+(n-d){\cdot}\beta_{i-1}$ for all $i \geq 1$.
\end{thm}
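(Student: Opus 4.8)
The plan is to induct on $d$ and feed off the two structural lemmas already proved: Lemma \ref{3.4}, which splits the reduction of a syzygy as $\Syz_A^i(A/I)/a\Syz_A^i(A/I) \cong \Syz_{\overline A}^{i-1}(A/I)\oplus\Syz_{\overline A}^i(A/I)$, and the short exact sequence $0\to Q\to I\to (A/I)^{n-d}\to 0$ coming from Lemma \ref{2.3}, which expresses $\Syz_A^1(A/I) = I$ as an extension of $(A/I)^{n-d}$ by $Q$. First I would dispose of the base case $d=0$: here $I^2 = (0)$ and $I/I^2 = I$ is $A/I$-free of rank $n$, so $\partial_1$ has all entries in $I$ (hence $A/I\otimes\partial_1 = 0$), $\Syz_A^1(A/I)\cong (A/I)^n$, and iterating, $\Syz_A^i(A/I)\cong(A/I)^{n^i}$ with $A/I\otimes\partial_i=0$ and $\beta_i = n^i = (n-0)^{i-0}(n-0+1)^0$, matching the stated formula.

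For the vanishing $A/I\otimes_A\partial_i = 0$ in general, I would argue that it is equivalent to the statement that $\Syz_A^i(A/I)\subseteq \fkm F_{i-1}$ is in fact annihilated modulo $\fkm\Syz_A^i$ by nothing new — more precisely, $A/I\otimes\partial_i = 0$ says $\operatorname{Im}\partial_i\subseteq I F_{i-1}$, i.e. the resolution is "$I$-linear". This I would get by induction: reduce modulo $a = a_1$, use Lemma \ref{3.4} to identify $\Syz_A^i(A/I)\otimes_A\overline A$ with a direct sum of syzygies over $\overline A$, and invoke Lemma \ref{3.3} (that $\overline I$ is Ulrich over $\overline A$) together with the inductive hypothesis that $\overline A/\overline I\otimes_{\overline A}\overline\partial_j = 0$ for all $j$; since $\overline A/\overline I = A/I$ and reduction mod $a$ of a minimal resolution of $A/I$ gives a (generally non-minimal, but that is fine) complex computing these, the matrices $\partial_i$ already have entries in $I$ before reduction precisely because they do after reduction and $a\in Q\subseteq I$. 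One should phrase this carefully via the exact sequence (3.5): the minimal resolution of $A/I$ over $A$, tensored with $\overline A$, has differentials whose entries are the images of those of $\mathbb F_\bullet$, and minimality of the $\overline A$-resolution combined with the splitting forces the original entries into $I$.

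With the vanishing in hand, the Betti-number recursion is then immediate: $A/I\otimes\partial_i = 0$ means $\Syz_A^i(A/I)/\fkm\Syz_A^i(A/I)$ has dimension $\beta_i$, and tensoring the exact sequences built from $0\to Q\to I\to(A/I)^{n-d}\to 0$ — equivalently, observing that the minimal resolution of $A/I$ is the mapping-cone-style splice of $n-d$ copies of $\mathbb F_\bullet$ (shifted) with the Koszul resolution $\mathbb L_\bullet$ of $Q$ — yields $\beta_i = \binom{d}{i} + (n-d)\beta_{i-1}$ for $1\le i\le d$ (the $\binom{d}{i}$ being $\operatorname{rank} K_i$ from the truncated Koszul complex), and $\beta_i = (n-d)\beta_{i-1}$ for $i\ge d+1$ once the Koszul part is exhausted. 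Solving the homogeneous recursion from the initial value $\beta_d = \sum_{j}\binom{d}{j}(n-d)^{d-j} = (n-d+1)^d$ (binomial theorem) gives $\beta_i = (n-d)^{i-d}(n-d+1)^d$ for $i\ge d$. I expect the main obstacle to be making the reduction-mod-$a$ argument for the vanishing $A/I\otimes\partial_i = 0$ fully rigorous — specifically, carefully justifying that one may compute the relevant $\overline A$-syzygies from the reduced (non-minimal) complex and transfer the "entries lie in $I$" conclusion back upstairs — rather than the purely combinatorial solution of the recurrence, which is routine.
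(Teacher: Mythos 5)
Your first half, the argument for $A/I\otimes_A\partial_i=0$, is essentially the paper's own: reduce modulo $a=a_1$, use Lemma \ref{3.4} together with the Vasconcelos splitting $I/aI\cong A/I\oplus\overline I$, note via Lemma \ref{3.3} that $\overline I$ is Ulrich over $\overline A$, and invoke the inductive hypothesis to conclude the entries of $\overline\partial_i$ lie in $\overline I$, hence those of $\partial_i$ lie in $I$ since $a\in I$. One small calibration: the complex $F_\bullet/aF_\bullet$ truncated at degree one is in fact a \emph{minimal} free resolution of $I/aI$ over $\overline A$ (not a ``generally non-minimal'' one), and this minimality is exactly what lets you read off $\overline A/\overline I\otimes\overline\partial_i=0$ from the inductive hypothesis applied to the two summands.

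The second half, however, has a genuine gap. You derive the recursion $\beta_i=\binom{d}{i}+(n-d)\beta_{i-1}$ by asserting that the minimal free resolution of $A/I$ is the horseshoe splice of $\mathbb L_\bullet$ (Koszul on $Q$) with $n-d$ shifted copies of $\mathbb F_\bullet$, coming from $0\to Q\to I\to(A/I)^{n-d}\to 0$. But the minimality of that horseshoe resolution is not automatic: it is equivalent to the vanishing of the connecting maps $\operatorname{Tor}_{i}^A((A/I)^{n-d},k)\to\operatorname{Tor}_{i-1}^A(Q,k)$ in the long exact Tor sequence for that short exact sequence, and the vanishing of $A/I\otimes\partial_i$ does not by itself force those connecting maps to vanish. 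In fact this minimality is precisely Corollary \ref{7.2} in the paper, which is proved there as a \emph{consequence} of Theorem \ref{7.1} (once you know the Betti numbers satisfy the recursion, the horseshoe has the right ranks and hence is minimal). Using it to derive the Betti number formula is therefore circular as stated. The paper instead obtains the recursion directly from Lemma \ref{3.4}: reducing mod $a$ gives $\beta_i=\overline\beta_{i-1}+\overline\beta_i$ where $\overline\beta_j=\beta_j^{\overline A}(A/I)$, and substituting the inductive formula for the $\overline\beta_j$ (with $\overline n=n-1$, $\overline d=d-1$, so $\overline n-\overline d=n-d$) yields both the closed form $(n-d)^{i-d}(n-d+1)^d$ for $i\ge d$ and, by a short Pascal-type manipulation, the identity $\beta_i=\binom{d}{i}+(n-d)\beta_{i-1}$. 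Your combinatorial solution of the recursion (observing $\beta_d=(n-d+1)^d$ by the binomial theorem and then iterating) is correct, but the recursion itself needs the Lemma \ref{3.4} splitting, not the horseshoe, as its source. You also, ironically, flag the vanishing as the main obstacle and the recursion as routine; it is the other way around.
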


\begin{proof}
We proceed by induction on $d$. If $d = 0$, then $\Syz_A^i(A/I) \cong (A/I)^{n^i}$~($ i \ge 0$) and the assertions are clear. 
Assume that $d >0$ and that our assertions hold true for $d-1$.
Let $a=a_1$ and put $\ol{A}=A/(a)$, $\ol{I}=I/(a)$. Then by Lemma \ref{3.4}
$$\Syz_A^i(A/I)/a \Syz_A^i(A/I) \cong \Syz_{\ol{A}}^{i-1}(A/I) \oplus \Syz_{\ol{A}}^{i}(A/I)$$
for each $i \ge 1$. Hence we get $\beta_i = \ol{\beta}_{i-1} + \ol{\beta}_i$ for $i \ge 1$, where $\ol{\beta}_i = \beta_i^{\ol{A}}(A/I)$. We put $\ol{n}=n-1$ and $\ol{d}=d-1$. Then, thanks to the hypothesis of induction on $d$, we get for each $i \ge d$ that 
\begin{align*}
\beta_i&=\ol{\beta}_{i-1}+\ol{\beta_i}
=(\ol{n}-\ol{d})^{i-1-\ol{d}}{\cdot}(\ol{n}-\ol{d}+1)^{\ol{d}}+(\ol{n}-\ol{d})^{i-\ol{d}}{\cdot}(\ol{n}-\ol{d}+1)^{\ol{d}}\\
&=(n-d)^{i-d}{\cdot}(n-d+1)^{d-1}+(n-d)^{i-d+1}{\cdot}(n-d+1)^{d-1}\\
&=(n-d)^{i-d}{\cdot}(n-d+1)^{d-1}{\cdot}\left[1+(n-d)\right]
=(n-d)^{i-d}{\cdot}(n-d+1)^{d}.
\end{align*}

Let $1 \leq i \leq d$.
If $i = 1$, then $\beta_i=\beta_1=n=\binom{d}{1}+(n-d){\cdot}\beta_0=\binom{d}{i}+(n-d){\cdot}\beta_{i-1}$. If $2 \leq i \leq d-1$, then
\begin{align*}
\beta_i&=\ol{\beta}_{i-1}+\ol{\beta}_i
=\textstyle\left[\binom{\ol{d}}{i-1}+(\ol{n}-\ol{d}){\cdot}\ol{\beta}_{i-2}\right]+\left[\binom{\ol{d}}{i}+(\ol{n}-\ol{d}){\cdot}\ol{\beta}_{i-1}\right]\\
&=\textstyle\left[\binom{d-1}{i-1}+(n-d){\cdot}\ol{\beta}_{i-2}\right]+\left[\binom{d-1}{i}+(n-d){\cdot}\ol{\beta}_{i-1}\right]\\
&=\textstyle\binom{d}{i}+(n-d){\cdot}\left[\ol{\beta}_{i-2}+\ol{\beta}_{i-1}\right]
=\textstyle\binom{d}{i}+(n-d){\cdot}\beta_{i-1}.
\end{align*}

Suppose that $i=d \geq 2$. We then have
$$
\beta_i=\beta_d= \ol{\beta}_{d-1} + \ol{\beta}_d= \textstyle\binom{\ol{d}}{\ol{d}} + (\ol{n}- \ol{d}){\cdot}\ol{\beta}_{d-2}+ \ol{\beta}_d= \textstyle\binom{d}{d} + (n -d){\cdot}\ol{\beta}_{d-2} + \ol{\beta}_d,
$$
while
\begin{align*}
\textstyle\binom{d}{d} + (n-d){\cdot}\beta_{d-1}
&= \textstyle\binom{d}{d} + (n-d){\cdot}\left[\ol{\beta}_{d-2} + \ol{\beta}_{d-1}\right]
= \textstyle\binom{d}{d} + (n-d){\cdot}\ol{\beta}_{d-2} + (n-d){\cdot}\ol{\beta}_{d-1}\\
&= \textstyle\binom{d}{d} + (n-d){\cdot}\ol{\beta}_{d-2} + (\ol{n} - \ol{d}){\cdot}\ol{\beta}_{d-1}
= \textstyle\binom{d}{d} + (n-d){\cdot}\ol{\beta}_{d-2} + \ol{\beta}_d.
\end{align*}
Hence
$$
\beta_i= \left\{
\begin{array}{ll}
(n-d)^{i-d}{\cdot}(n-d+1)^d & (d \le i),\\
\binom{d}{i}+(n-d){\cdot}\beta_{i-1} & (1 \leq i \leq d),\\
1 & (i=0),
\end{array}
\right.$$
so that $\beta_i=\binom{d}{i}+(n-d){\cdot}\beta_{i-1}$ for all $i \geq 1$.

Because 
$$  \cdots \to F_i/aF_i \to F_{i-1}/aF_{i-1} \to \cdots \to F_1/aF_1 \to I/aI \to 0$$
is a minimal free resolution of the $\ol{A}$--module $I/aI$ and because 
$I/aI \cong A/I \oplus \ol{I}$ by Claim in the proof of Lemma \ref{3.4}, we see $A/I \otimes_A\partial_i=\ol A/\ol I\otimes_{\ol A}\ol\partial_i= 0$ for $i > 1$ from the induction hypothesis, where $\ol\partial_i:=\ol A\otimes_A\partial_i$. As $A/I\otimes_A\partial_1 = 0$, this proves Theorem \ref{7.1}.
\end{proof}

Suppose that $d > 0$ and we look at the exact sequence
$$(\sharp) \ \ \ 0 \to Q \overset{\iota}{\to} I \to I/Q \to 0$$
of $A$--modules, where $\iota : Q \to I$ is the embedding. Remember now that a minimal free resolution of $Q$ is given by the truncation 
$$\mathbb{L}_{\bullet}: 0 \to K_d \to \cdots \to K_1 \to Q \to 0$$
of the Koszul complex ${\mathbb{K}}_{\bullet}  = \mathbb{K}_\bullet (a_1, a_2, \ldots, a_d; A)$ generated by the $A$--regular sequence $a_1, a_2, \ldots, a_d$ and a minimal free resolution of $I/Q = (A/I)^{n-d}$ is given by the direct sum $\mathbb {G}_{\bullet}$ of $n - d$ copies of $\mathbb{F}_{\bullet}$. Then by the horseshoe lemma, a free resolution of $I$ is induced from $\mathbb{L}_{\bullet}$ and $\mathbb {G}_{\bullet}$ via exact sequence $(\sharp)$ above. With this notation, what Theorem \ref{7.1} says is the following.

\begin{cor}\label{7.2} In the exact sequence $0 \to Q \overset{\iota}{\to} I \to (A/I)^{n-d} \to 0$, the free resolution of $I$ induced from $\mathbb{L}_{\bullet}$ and $\mathbb{G}_{\bullet}$ is a minimal free resolution.
\end{cor}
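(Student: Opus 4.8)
The plan is to show that the free resolution of $I$ produced by the horseshoe lemma applied to $0 \to Q \overset{\iota}{\to} I \to (A/I)^{n-d} \to 0$ is minimal, i.e. that each differential has entries in $\fkm$. Write the induced resolution as $\mathbb{P}_\bullet$, where $P_0 = K_0 \oplus G_0 = A \oplus A^{n-d}$ and more generally $P_i = K_i \oplus G_i$ for $i \geq 1$ (with $K_i = 0$ for $i > d$). Its ranks are $\operatorname{rank}_A P_i = \binom{d}{i} + (n-d)\beta_i$ for $i \geq 1$ and $\operatorname{rank}_A P_0 = 1 + (n-d)$. Since $\mathbb{L}_\bullet$ is the truncated Koszul complex on a regular sequence inside $\fkm$ and $\mathbb{G}_\bullet$ is a direct sum of copies of the minimal resolution $\mathbb{F}_\bullet$, the "diagonal" blocks of the differential of $\mathbb{P}_\bullet$ already lie in $\fkm$; the only possible non-minimality comes from the connecting (off-diagonal) blocks $G_i \to K_{i-1}$ that the horseshoe construction introduces.

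First I would reduce the problem to a Betti-number count. A free resolution $\mathbb{P}_\bullet$ of a module $N$ with $\operatorname{rank}_A P_i = \beta_i^A(N)$ for all $i$ is automatically minimal, because $\beta_i^A(N)$ is the minimal possible rank in homological degree $i$ and any non-minimal resolution has strictly larger rank in some degree (one can split off a trivial summand $A \overset{1}{\to} A$). So it suffices to check $\operatorname{rank}_A P_i = \beta_i^A(I)$ for every $i$. Now $\beta_0^A(I) = \mu_A(I) = n$, matching $1 + (n-d)$; and for $i \geq 1$, $\beta_i^A(I) = \beta_{i+1}^A(A/I) = \beta_{i+1}$ by dimension shift. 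By Theorem \ref{7.1}, $\beta_{i+1} = \binom{d}{i+1} + (n-d)\beta_i$. On the other hand one has the Pascal-type identity $\binom{d}{i} + (n-d)\beta_i$ for $\operatorname{rank}_A P_i$; so what I actually need is the combinatorial identity relating $\operatorname{rank}_A P_i = \binom{d}{i} + (n-d)\beta_i$ to $\beta_{i+1} = \binom{d}{i+1} + (n-d)\beta_i$ — but these are visibly different in the $\binom{d}{\bullet}$ term, so the count must instead be organized via the long exact sequence in Betti numbers coming from $(\sharp)$ and the observation that $A/I \otimes_A \partial_i = 0$.

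The cleaner route, which I would actually carry out, uses the first assertion of Theorem \ref{7.1} directly: applying $A/I \otimes_A -$ to $\mathbb{P}_\bullet$ and to its two constituent complexes, the differentials of $A/I \otimes_A \mathbb{G}_\bullet$ vanish (since $A/I \otimes_A \partial_i = 0$), and the differentials of $A/I \otimes_A \mathbb{L}_\bullet$ vanish because the $a_j \in \fkm \subseteq I$ act as zero on $A/I$. Hence after tensoring with $A/I$ the only differentials of $A/I \otimes_A \mathbb{P}_\bullet$ that could survive are the connecting maps $A/I \otimes G_i \to A/I \otimes K_{i-1}$; but a horseshoe resolution is built so that its differential restricted to the sub-complex $\mathbb{L}_\bullet$ and projected to the quotient-complex $\mathbb{G}_\bullet$ agree with the original differentials, and the connecting block $G_i \to K_{i-1}$ is constructed as a lift of a map $G_i \to F_{i-1}$-type datum through $\iota$; the key point is that $\iota : Q \to I$ induces the zero map $Q/\fkm Q \to I/\fkm I$ is false — rather, one checks $\operatorname{Tor}^A_1(A/I, I/Q) \to A/I \otimes Q$ lands in $\fkm(A/I \otimes Q)$. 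I would verify this last vanishing by noting that the connecting homomorphism $A/I \otimes_A (I/Q) \cong \operatorname{Tor}_0 \to \operatorname{Tor}_{-1}$ is governed by the boundary in the Tor long exact sequence of $(\sharp)$, and that minimality of $\mathbb{F}_\bullet$ forces the relevant comparison map to be divisible by elements of $\fkm$. Concluding, every differential of $A/I \otimes_A \mathbb{P}_\bullet$ is zero, which is exactly the statement that $\mathbb{P}_\bullet$ is a minimal free resolution of $I$.

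The main obstacle will be pinning down precisely why the off-diagonal connecting blocks of the horseshoe resolution have entries in $\fkm$; this is not automatic from the horseshoe lemma itself (which only guarantees a resolution, not a minimal one) and must be extracted either from the explicit form of the connecting maps together with $I^2 = QI$ and the freeness of $I/I^2$, or — more slickly — by the rank count: once one knows the total rank $\operatorname{rank}_A P_i$ equals $\beta_i^A(I)$ for all $i$, minimality is forced for free. I expect the rank identity to follow by an easy induction on $i$ from $\beta_{i+1} = \binom{d}{i+1} + (n-d)\beta_i$ of Theorem \ref{7.1} combined with $\beta_i^A(I) = \beta_{i+1}^A(A/I)$, after correctly matching $\operatorname{rank}_A K_i = \binom{d}{i}$ and $\operatorname{rank}_A G_i = (n-d)\beta_i$; so in the write-up I would lead with the rank argument and relegate the differential-vanishing remark to a one-line corollary of "minimal rank $\Rightarrow$ minimal resolution."
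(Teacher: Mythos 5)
Your intended strategy---the rank count, using that a free resolution with minimal-possible ranks in every degree is automatically minimal---is exactly the argument the paper has in mind; Corollary \ref{7.2} is stated as an unproved consequence of Theorem \ref{7.1}, and the rank count is how one extracts it. However, you have an off-by-one indexing error in the horseshoe resolution that makes your count fail, and you then abandon the correct approach for a vaguer one.

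The issue: $\mathbb{L}_\bullet$ is the \emph{truncated} Koszul complex $0\to K_d\to\cdots\to K_1\to Q\to 0$, which is a free resolution of $Q$ whose $i$-th term (for $i\ge 0$) is $K_{i+1}$, not $K_i$. Thus the horseshoe term in degree $i$ is $P_i = K_{i+1}\oplus G_i$ with $\operatorname{rank}_A P_i = \binom{d}{i+1} + (n-d)\beta_i$. You instead write $P_i = K_i\oplus G_i$ and get $\binom{d}{i}+(n-d)\beta_i$; your own check at $i=0$ already breaks, since you claim $\operatorname{rank}_A P_0 = 1 + (n-d)$ ``matches'' $\beta_0^A(I)=\mu_A(I)=n$, but $1+(n-d)=n-d+1\ne n$ unless $d=1$. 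With the correct indexing, $\operatorname{rank}_A P_0 = d + (n-d) = n = \mu_A(I)$, and in general Theorem \ref{7.1} gives $\binom{d}{i+1}+(n-d)\beta_i = \beta_{i+1} = \beta_{i+1}^A(A/I) = \beta_i^A(I)$. So the ranks match perfectly, and minimality follows at once from the standard fact you cite (a resolution whose ranks equal the Betti numbers has no trivial direct-sum summand). Your observed ``visible mismatch in the $\binom{d}{\bullet}$ term'' was purely an artifact of the wrong indexing, not a flaw in the rank-count approach, and there is no need for the subsequent Tor/connecting-homomorphism argument (which, as you note yourself, you were unable to close). Fix the indexing and the first route finishes cleanly.
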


For example, suppose that $A$ is a Gorenstein ring with $\dim A = 0$ and assume that $I \ne (0)$. Then $I = (x)$ for some $x \in A$ (Lemma \ref{2.6}). Because $(0) : I = I$, a minimal free resolution of $A/I$ is given by 
$$\mathbb{F}_{\bullet} \cdots \to A \overset{x}{\to} A \overset{x}{\to} A  \to A/I \to 0.$$

We similarly have the following.

\begin{ex}\label{7.3}
Suppose that $A$ is a Gorenstein ring with $\dim A = 1$. Let $I$ be an $\fkm$--primary ideal of $A$ containing $Q = (a)$ as a reduction. Assume that $I$ is an Ulrich ideal of $A$ which is not a parameter ideal. Then $\mu_A(I) = 2$. We write $I = (a,x)$~($x \in A$). Then $x^2 = ay$ for some $y \in I$, because $I^2 = aI$. With this notation, a minimal free resolution of $A/I$ is given by

\vspace{2mm}

$$\mathbb{F}_{\bullet} : \cdots \to A^2 \mapright{\begin{pmatrix}
-x&-y\\
a & x
\end{pmatrix}}
A^2 \mapright{\begin{pmatrix}
-x&-y\\
a & x
\end{pmatrix}}A^2 \mapright{\begin{pmatrix}
a&x
\end{pmatrix}} A \overset{\varepsilon}{\to} A/I \to 0.$$ 
\end{ex}

\begin{proof}
It is standard to check that $\mathbb{F}_{\bullet}$ is a complex of $A$--modules. To show that $\mathbb{F}_{\bullet}$ is exact, let $f,g \in A$ and assume that $af + xg = 0$. Then, since $g \in Q : I =I$, we may write $g = ag_1  + xg_2$ with $g_i \in A$. Then, because 
$af + xg = af + a(xg_1 + yg_2) = 0$, we get $f = -(xg_1+yg_2)$, so that 
$\begin{pmatrix}
f\\
g
\end{pmatrix} = \begin{pmatrix} -(xg_1 + yg_2)\\
ag_1 + xg_2
\end{pmatrix}
= \begin{pmatrix}
-x&-y\\
a & x
\end{pmatrix}
\begin{pmatrix}
g_1 \\
g_2
\end{pmatrix}.$
Therefore, if $f,g \in A$ such that 
$\begin{pmatrix}
-x&-y\\
a&x
\end{pmatrix}\begin{pmatrix}f\\
g
\end{pmatrix}= \binom{0}{0},$ we then have $
\begin{pmatrix}
f\\
g
\end{pmatrix} = \begin{pmatrix}
-x&-y\\
a & x
\end{pmatrix}
\begin{pmatrix}
g_1\\
g_2
\end{pmatrix}$ for some $g_i \in A$, because $af + xg = 0$. Hence $\mathbb{F}_\bullet$ is a minimal free resolution of $A/I$.
\end{proof}

As we have seen in Example \ref{7.3}, minimal free resolutions of Ulrich ideals of a Gorenstein local ring are eventually periodic.
Namely we have the following.

\begin{cor}\label{7.4}
The following assertions hold true.
\begin{itemize}
\item[(1)] $\Syz_A^{i+1}(A/I) \cong [\Syz_A^{i}(A/I)]^{n-d}$ for all $i \geq d$.
\item[(2)] Suppose that $A$ is a Gorenstein ring. Then one can choose a minimal free resolution ${\mathbb{F}}_{\bullet}$ of $A/I$ of the form 
$$\cdots \to F_d \overset{{\partial}_{d+1}}{\to} F_{d} \overset{\partial_{d+1}}{\to} F_d \overset{{\partial}_{d}}{\to} F_{d-1} \to \cdots \to F_1 \overset{\partial_1}{\to} F_0 = A \overset{\varepsilon}{\to} A/I \to 0,$$ that is $F_{d+i}=F_d$ and $\partial_{d+i+1}=\partial_{d+1}$ for all $i \geq 1$.
\end{itemize}
\end{cor}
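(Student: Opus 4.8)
The plan is to deduce both statements from the description of the minimal free resolution of $I$ given in Corollary \ref{7.2}, together with the fact that $Q$ has finite projective dimension.

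For (1) I would first observe that $\Syz_A^{i+1}(A/I)\cong\Syz_A^i(I)$ for every $i\ge 0$, since $\Syz_A^1(A/I)=I$. Then I would apply the syzygy functor to the exact sequence $(\sharp)\colon 0\to Q\to I\to (A/I)^{n-d}\to 0$. Because $\mathbb L_\bullet$ (the truncated Koszul complex) and $\mathbb G_\bullet=\mathbb F_\bullet^{\oplus(n-d)}$ are \emph{minimal} free resolutions of $Q$ and of $I/Q=(A/I)^{n-d}$, and because the horseshoe resolution of $I$ that they induce is again minimal by Corollary \ref{7.2}, comparing the degreewise split exact sequence of these three resolutions yields, for every $i\ge 1$, an exact sequence
$$0\longrightarrow\Syz_A^i(Q)\longrightarrow\Syz_A^i(I)\longrightarrow[\Syz_A^i(A/I)]^{n-d}\longrightarrow 0 ,$$
with no spurious free summands precisely because the middle resolution is minimal. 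Since $a_1,\dots,a_d$ is an $A$-regular sequence, $\pd_A Q=d-1$, so $\Syz_A^i(Q)=0$ for all $i\ge d$; hence $\Syz_A^{i+1}(A/I)\cong\Syz_A^i(I)\cong[\Syz_A^i(A/I)]^{n-d}$ for all $i\ge d$. The case $d=0$ is trivial separately: there $I^2=0$ and $I$ is $A/I$-free, so $\Syz_A^i(A/I)\cong(A/I)^{n^i}$.

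For (2), assume $A$ is Gorenstein; I may assume $I$ is not a parameter ideal, since otherwise $A/I$ has finite projective dimension and a resolution of the stated infinite periodic shape cannot occur. By Corollary \ref{2.6}(b) we then have $n=\mu_A(I)=d+1$, so $n-d=1$ and (1) reads $\Syz_A^{i+1}(A/I)\cong\Syz_A^i(A/I)$ for all $i\ge d$. Put $N:=\Syz_A^d(A/I)$, let $F_d\xrightarrow{\ \pi\ }N$ be the minimal free cover occurring in $\mathbb F_\bullet$, and recall the short exact sequence $0\to\Syz_A^{d+1}(A/I)\xrightarrow{\ \iota\ }F_d\xrightarrow{\ \pi\ }N\to 0$ in which $\iota$ is the inclusion $\ker\partial_d\hookrightarrow F_d$. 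Choosing an isomorphism $\psi\colon N\xrightarrow{\ \sim\ }\Syz_A^{d+1}(A/I)$ from (1), I would set $\partial_{d+1}:=\iota\circ\psi\circ\pi\colon F_d\to F_d$ and then define $F_{d+i}:=F_d$ and $\partial_{d+i+1}:=\partial_{d+1}$ for all $i\ge1$. A short diagram chase gives $\Im\partial_{d+1}=\ker\partial_d$, $\ker\partial_{d+1}=\ker\pi=\Im\partial_{d+1}$, $\partial_d\partial_{d+1}=0$ (because $\partial_d\iota=0$) and $\partial_{d+1}^2=0$ (because $\pi\iota=0$), so the resulting complex is exact, and it is minimal since $\Im\partial_{d+1}=\ker\partial_d=\Syz_A^{d+1}(A/I)\subseteq\fkm F_d$. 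By uniqueness of minimal free resolutions this is the minimal free resolution of $A/I$, now displayed in the required periodic form.

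I do not expect a genuine obstacle. The one point that needs care is the one flagged above: the three-term exact sequence of syzygies in (1) must be taken with the honest syzygy modules, and this is legitimate only because Corollary \ref{7.2} guarantees the horseshoe resolution of $I$ is minimal; the remaining verifications in both parts are routine homological bookkeeping.
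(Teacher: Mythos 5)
Your proof is correct. Both parts rest, as the paper's argument does, on Corollary~\ref{7.2}, but you fill in the details along a slightly different route, and the way you execute~(2) is genuinely different in flavor. In~(1) the paper observes directly that, for $i\ge d$, Corollary~\ref{7.2} identifies $\partial_{i+2}$ with $\partial_{i+1}^{\oplus(n-d)}$ and hence identifies their cokernels, whereas you extract instead the horseshoe short exact sequence of syzygies $0\to\Syz_A^i(Q)\to\Syz_A^i(I)\to[\Syz_A^i(A/I)]^{n-d}\to 0$ (valid because Corollary~\ref{7.2} makes the horseshoe resolution minimal) and then use $\pd_AQ=d-1$ to kill the left term for $i\ge d$. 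Your version is a bit more conceptual and does not need to unwind the diagonal form of the differential, at the cost of invoking the standard lemma about images in a degreewise-split exact sequence of resolutions. In~(2) the paper compares the portion of $\mathbb{F}_\bullet$ in degrees $d,\dots,d+2$ with its shift via isomorphisms $\alpha,\beta$ supplied by Corollary~\ref{7.2} and finishes by a diagram chase with $\beta^{-1}\partial_{d+1}$; you instead build the periodic differential explicitly as $\partial_{d+1}:=\iota\circ\psi\circ\pi$ from the cover $\pi\colon F_d\twoheadrightarrow\Syz_A^d(A/I)$, the inclusion $\iota\colon\Syz_A^{d+1}(A/I)\hookrightarrow F_d$, and the isomorphism $\psi$ from~(1). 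This is a clean self-contained construction and the verifications you list (exactness, $\partial_{d+1}^2=0$, $\partial_d\partial_{d+1}=0$, minimality via $\Im\partial_{d+1}=\ker\partial_d\subseteq\fkm F_d$) are exactly what is needed, so the final appeal to uniqueness of minimal resolutions is superfluous but harmless. Your remark that~(2) requires $I$ to be a non-parameter ideal is also well taken: the paper tacitly reduces to $n>d$, and for $n=d$ the Koszul resolution is not of the stated infinite periodic shape.
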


\begin{proof}
(1) This is clear, because
$$\Syz_A^{i+1}(A/I) = \Coker~\partial_{i+2} \cong \left[\Coker~\partial_{i+1}\right]^{n-d} = \left[\Syz_A^i(A/I)\right]^{n-d}$$
for all $i \ge d$ (see Corollary \ref{7.2}).

(2) By Example \ref{7.3} we may assume $n > d \ge 2$; hence $n = d+1$. By Corollary \ref{7.2} there exist isomorphisms $\alpha : F_{d+2} \tilde{\to} F_{d+1}$ and $\beta : F_{d+1} \tilde{\to} F_d$ which make the following diagram
\[\begin{array}{ccccccc}
F_{d+2} & \mapright{\partial_{d+2}} & F_{d+1} & \mapright{\partial_{d+1}} & F_{d} & \mapright{\partial_d} & F_{d-1}\\
\mapdown{\alpha} & &\mapdown{\beta} & & \mapdown{} & & \mapdown{}\\
F_{d+1} & \mapright{\partial_{d+1}} & F_d & \mapright{\partial_d} & F_{d-1} & \mapright{\partial_{d-1}} & F_{d-2}
\end{array}\]
commutative. Then a simple diagram chase will show that the sequence 
$$\cdots \to F_{d+1} \overset{\beta^{-1}\partial_{d+1}}{\to} F_{d+1} \overset{\beta^{-1}\partial_{d+1}}{\to} F_{d+1} \overset{\partial_d \beta}{\to}  F_{d-1} \overset{\partial_{d-1}}{\to} F_{d-2}$$
is exact.
\end{proof}

The second assertion of the above corollary yields the following result.

\begin{cor} \label{Hyp}
Let $(A,\m)$ be a Gorenstein local ring.
Suppose that there exist non-parameter Ulrich ideals $I,J$ of $A$ with $\m J\subseteq I\subsetneq J$.
Then $A$ is a hypersurface.
\end{cor}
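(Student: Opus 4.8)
The plan is to show that the Betti numbers of the residue field $k=A/\fkm$ over $A$ are bounded, and then to invoke the classical theorem of Gulliksen that a Noetherian local ring whose residue field has bounded Betti numbers is a hypersurface. The hypothesis $\fkm J\subseteq I\subsetneq J$ will be used precisely to squeeze a copy of $k^{r}$ (with $r\ge1$) between $A/I$ and $A/J$, so that the boundedness of the Betti numbers of $A/I$ and $A/J$ — which is essentially the content of Corollary \ref{7.4}(2) in the present situation — passes to $k$.

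First I would assemble the data. Since $A$ is Gorenstein and $I$, $J$ are Ulrich ideals that are not parameter ideals, Corollary \ref{2.6} gives $\mu_A(I)=\mu_A(J)=d+1$. Hence Corollary \ref{7.4}(2), applied to $I$ and to $J$, shows that both $A/I$ and $A/J$ admit minimal free resolutions that are eventually $1$-periodic; in particular the Betti numbers $\beta_i^A(A/I)$ and $\beta_i^A(A/J)$ are constant for $i\gg0$, hence bounded. (By Theorem \ref{7.1} that common eventual value is $2^d$, but only boundedness matters.) Next, from $\fkm J\subseteq I$ the module $J/I$ is killed by $\fkm$, so $J/I\cong k^{r}$ where $r=\ell_A(J/I)$; and $r\ge1$ because $I\subsetneq J$.

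The main step is to exploit the short exact sequence
$$0\longrightarrow J/I\longrightarrow A/I\longrightarrow A/J\longrightarrow 0 .$$
The associated long exact sequence of $\Tor_\bullet^A(k,-)$ yields $\beta_i^A(J/I)\le\beta_i^A(A/I)+\beta_{i+1}^A(A/J)$ for all $i\ge0$ (equivalently, lifting the surjection $A/I\to A/J$ to a morphism of minimal free resolutions and taking the mapping cone gives, after a degree shift, a free resolution of $J/I$). Combining this with $\beta_i^A(J/I)=r\cdot\beta_i^A(k)$, we get
$$r\cdot\beta_i^A(k)\le\beta_i^A(A/I)+\beta_{i+1}^A(A/J)\qquad(i\ge0),$$
and the right-hand side is bounded independently of $i$ by the previous paragraph. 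Since $r\ge1$, the sequence $\{\beta_i^A(k)\}_{i\ge0}$ is bounded, and therefore $A$ is a hypersurface.

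I expect the only non-routine ingredient — and the point worth isolating — to be the citation of the characterization of hypersurfaces via bounded Betti numbers of the residue field; everything else (the isomorphism $J/I\cong k^{r}$, the $\Tor$ inequality, and the eventual constancy of $\beta_i^A(A/I)$ and $\beta_i^A(A/J)$ drawn from Corollary \ref{7.4}(2)) is standard bookkeeping. I would also remark that $A$ is forced to be non-regular, since a regular local ring has no non-parameter Ulrich ideals by the remark following Theorem \ref{3.2}, so the conclusion is a genuine hypersurface.
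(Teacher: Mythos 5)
Your proof is correct and follows essentially the same route as the paper's: the short exact sequence $0\to J/I\to A/I\to A/J\to 0$, the resulting inequality $\beta_i^A(J/I)\le\beta_i^A(A/I)+\beta_{i+1}^A(A/J)$, the identification $J/I\cong k^r$, the bound $\beta_i^A(A/I)=\beta_i^A(A/J)=2^d$ for $i\ge d$ coming from Theorem \ref{7.1}, and the classical characterization of hypersurfaces by bounded Betti numbers of the residue field (the paper cites \cite[Remarks 8.1.1(3)]{A}). The only cosmetic difference is that you route through eventual periodicity (Corollary \ref{7.4}(2)) rather than invoking Theorem \ref{7.1} directly, but both give the same bound.
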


\begin{proof}
The natural exact sequence $0 \to J/I \to A/I \to A/J \to 0$ induces an inequality $\beta_i^A(J/I)\le\beta_i^A(A/I)+\beta_{i+1}^A(A/J)$ for all integers $i$.
Using Theorem \ref{7.1}, we have $\beta_i^A(J/I)\le2^d+2^d=2^{d+1}$ for all $i\ge d$.
By assumption, $J/I\cong(A/\m)^n$ for some $n\ge1$.
Hence $\beta_i^A(A/\m)\le\frac{2^{d+1}}{n}$, which says that the Betti numbers of $A/\m$ are bounded above.
It follows from \cite[Remarks 8.1.1(3)]{A} that $A$ is a hypersurface.
\end{proof}

Assertion (1) of Corollary \ref{7.4} shows that Ulrich modules with respect to $I$ obtained by syzygies $\Syz_A^i(A/I)$ ~ ($i \ge d$) are essentially of one kind. To see this phenomenon more precisely, let $\rmI_1(\partial_i)$ ~($i \ge 1$) denote the ideal of $A$ generated by the entries of the matrix $\partial_i : F_i \to F_{i-1}$. We then have the following.

\begin{thm}\label{7.5} Suppose that $\mu_A(I) > d$. Then 
$\rmI_1(\partial_i)=I$ for all $i \geq 1$.
\end{thm}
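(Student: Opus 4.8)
The plan is to reduce the statement to the structure of the minimal free resolution established in Theorem \ref{7.1} and Corollary \ref{7.2}, and then argue by induction on $d$, exactly parallel to the earlier proofs in this section. First note that by Theorem \ref{7.1} we have $A/I\otimes_A\partial_i=0$ for all $i\ge1$, which says precisely that every entry of every matrix $\partial_i$ lies in $I$; hence $\rmI_1(\partial_i)\subseteq I$ for all $i\ge1$, and the whole content is the reverse inclusion $I\subseteq\rmI_1(\partial_i)$. Since $\partial_1=(a_1,\dots,a_d,x_{d+1},\dots,x_n)$ for a minimal generating set $a_1,\dots,a_d,x_{d+1},\dots,x_n$ of $I$, we trivially have $\rmI_1(\partial_1)=I$, so it remains to treat $i\ge2$.

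For $d=0$ we have $I^2=(0)$ and $A/I$ is $A/I$-free, so $\Syz_A^i(A/I)\cong(A/I)^{n^i}$ and the map $\partial_i$ is, up to choice of bases, a block matrix whose nonzero blocks are copies of $\partial_1$; thus $\rmI_1(\partial_i)=\rmI_1(\partial_1)=I$. For the inductive step, set $a=a_1$, $\ol A=A/(a)$, $\ol I=I/(a)$. By Lemma \ref{3.4} (and the Claim in its proof) the complex $\mathbb F_\bullet/a\mathbb F_\bullet$ is a minimal free resolution of $\ol A\otimes_A(A/I)\cong A/I$ over $\ol A$, except with an extra free summand $\ol A$ splitting off at the $F_1$ stage coming from $I/aI\cong A/I\oplus\ol I$. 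Concretely, $\ol\partial_i:=\ol A\otimes_A\partial_i$ is (a minimal free resolution differential of $\ol I$ over $\ol A$) for $i\ge2$, and $\rmI_1(\ol\partial_i)=\ol I\cdot\ol A$ by the induction hypothesis applied to the Ulrich ideal $\ol I$ of $\ol A$ (Lemma \ref{3.3}). Lifting back, $\rmI_1(\partial_i)+(a)=\rmI_1(\ol\partial_i)+(a)$ equals $I$ modulo $(a)$, i.e. $\rmI_1(\partial_i)+(a)=I$. To finish, I would use that $\partial_i$ for $i\ge2$ factors through $\Syz_A^{i}(A/I)\subseteq\fkm F_{i-1}$ together with $a\in Q\subseteq I$ and $I^2=QI$ to run a Nakayama-type argument: write $a=\sum$ of entries of $\partial_i$ times unit combinations is not literally true, so instead I would argue that $\rmI_1(\partial_i)$ is an ideal with $\rmI_1(\partial_i)+(a)=I$ and $a\in I^2+\text{(stuff in }\rmI_1(\partial_i))$ — more precisely, since the columns of $\partial_i$ generate $\Syz_A^i(A/I)$ and since $a\cdot\Syz_A^{i-1}(A/I)\subseteq I\cdot\Syz_A^{i-1}(A/I)$, one shows directly that $a\in\rmI_1(\partial_i)$, giving $I=\rmI_1(\partial_i)+(a)=\rmI_1(\partial_i)$.

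The main obstacle is exactly this last point: getting $a\in\rmI_1(\partial_i)$ (for $i\ge2$) rather than merely $a\in I$. The cleanest route is probably to avoid it entirely by invoking Corollary \ref{7.2}: the minimal free resolution of $I$ (shifted, this is $\mathbb F_\bullet$ for $i\ge2$) is the one induced via the horseshoe lemma from the Koszul truncation $\mathbb L_\bullet$ of $Q$ and from $\mathbb G_\bullet=\mathbb F_\bullet^{\oplus(n-d)}$. In that induced resolution the differential in homological degree $i\ge2$ has a block-triangular form whose diagonal blocks are the Koszul differential (whose $\rmI_1$ is $Q$, hence $\supseteq$ the $a_j$'s) and $n-d$ copies of $\partial_{i-1}$ of $\mathbb F_\bullet$; by downward induction on $i$ combined with $\rmI_1(\partial_1)=I$ and the appearance of all the Koszul entries, one reads off $\rmI_1(\partial_i)\supseteq Q+\rmI_1(\partial_{i-1})=I$. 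Combined with the inclusion $\rmI_1(\partial_i)\subseteq I$ from $A/I\otimes_A\partial_i=0$, this gives equality for all $i\ge1$. I would present the argument in this second form, since it makes the role of Theorem \ref{7.1} and Corollary \ref{7.2} transparent and sidesteps the delicate Nakayama step.
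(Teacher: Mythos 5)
Your second route (via Corollary \ref{7.2} and the block structure of the induced resolution) is essentially the paper's own proof: the paper likewise uses the fact that the Koszul truncation is a direct summand to read off $Q\subseteq\rmI_1(\partial_i)$ for $1\le i\le d$, then the block form $\begin{pmatrix}\ast\\ \partial_d^{\oplus(n-d)}\end{pmatrix}$ of $\partial_{d+1}$ to propagate $\rmI_1(\partial_d)=I$ upward, with the only cosmetic difference being that the paper obtains the inclusion $\rmI_1(\partial_i)\subseteq I$ as part of the identity $\rmI_1(\partial_i)+Q=I$ (via Lemma \ref{3.4} and induction on $d$) rather than by directly quoting $A/I\otimes_A\partial_i=0$ from Theorem \ref{7.1}. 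Two small points to fix before writing it up: the induction on $i$ runs upward from $\rmI_1(\partial_1)=I$, not downward; and the Koszul diagonal block is present only for $2\le i\le d$, so for $i\ge d+1$ the block structure yields only $\rmI_1(\partial_i)\supseteq\rmI_1(\partial_{i-1})$ (with $\partial_i\cong\partial_{i-1}^{\oplus(n-d)}$ for $i\ge d+2$), which nevertheless suffices because $\rmI_1(\partial_{i-1})=I\supseteq Q$ is already known at that stage and $n-d>0$ by hypothesis.
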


\begin{proof}
The assertion is obvious, if $d = 0$ (remember that $\Syz_A^i(A/I) \cong (A/I)^{n^i}$ for all $i \ge 0$). Therefore induction on $d$ easily shows that $\rmI_1(\partial_i) + Q = I$ for all $i \ge 1$ (use Lemma \ref{3.4}).

Suppose now that $d > 0$. Then we get $\rmI_1(\partial_i) \supseteq Q$ for all $1 \le i \le d$, because by Corollary \ref{7.2} the truncation
$$\mathbb{L}_{\bullet} : 0 \to K_d \to K_{d-1} \to \cdots \to K_1 \to Q \to 0$$
of the Koszul complex $\mathbb{K}_{\bullet} = \mathbb{K}_{\bullet}(a_1, a_2, \ldots, a_d;A)$ is a subcomplex of the truncation 
$${\mathbb{M}}_{\bullet} \cdots \to F_{d+1} \to F_d \to \cdots \to F_1 \to I \to 0$$
of the minimal free resolution $\mathbb{F}_{\bullet}$ of $A/I$ and $K_i$ is a direct summand of $F_i$ for each $1 \le i \le d$. Hence $\rmI_1(\partial_i) = I$ if $1 \le i \le d$. On the other hand, Corollary \ref{7.2} shows also that $\rmI_1(\partial_{i+1}) = \rmI_1(\partial_i)$ for $i \ge d+1$. Consequently, to see that $\rmI_1(\partial_i) = I$ for all $i \ge d+1$, it suffices to show $\rmI_1(\partial_{d+1}) \supseteq Q$ only, which is obviously true, because by Corollary \ref{7.2} the matrix $\partial_{d+1}$ has the form
$$\partial_{d+1} = \begin{pmatrix}
*\\
\partial_d^{\oplus n-d}
\end{pmatrix}\ \ \ (n -d > 0)
$$
with $\rmI_1(\partial_d) = I$. This completes the proof of Theorem \ref{7.5}.
\end{proof}

The following result is a direct consequence of Corollary \ref{7.4} and Theorem \ref{7.5}.

\begin{cor}\label{7.6}
Let $(A,\fkm)$ be a Cohen--Macaulay local ring of dimension $d \ge 0$. Let $I$ and $J$ be $\fkm$--primary ideals of $A$ containing some parameter ideals of $A$ as reductions. Suppose that both $I$ and $J$ are Ulrich ideals of $A$ with $\mu_A(I) > d$ and $\mu_A(J) > d$. If $\Syz_A^i(A/I) \cong \Syz_A^i(A/J)$ for some $i\ge0$, then $I = J$.
\end{cor}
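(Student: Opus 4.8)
The plan is to recover each of the ideals from a single syzygy module by means of Theorem \ref{7.5}: to any finitely generated $A$-module $M$ with minimal free presentation $F_1\overset{\phi}{\to}F_0\to M\to0$ I want to attach the ideal $\rmI_1(\phi)$, observe that it is an invariant of the isomorphism class of $M$, and apply this to $M=\Syz_A^i(A/I)\cong\Syz_A^i(A/J)$. First dispose of the trivial case $i=0$: there $A/I\cong A/J$, so $I=\Ann_A(A/I)=\Ann_A(A/J)=J$. Hence assume $i\ge1$ and put $M=\Syz_A^i(A/I)\cong\Syz_A^i(A/J)$.

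The truncation $F_{i+1}\overset{\partial_{i+1}}{\to}F_i\to M\to0$ of the minimal free resolution $\mathbb{F}_{\bullet}$ of $A/I$, and the corresponding truncation $F_{i+1}'\overset{\partial_{i+1}'}{\to}F_i'\to M\to0$ coming from the minimal free resolution of $A/J$, are free presentations of $M$ whose matrices have entries in $\fkm$. The key lemma is that $\rmI_1(\partial_{i+1})$ is the smallest proper ideal $\fka$ of $A$ for which $M/\fka M$ is free over $A/\fka$: one inclusion is immediate, since $\partial_{i+1}$ becomes the zero matrix modulo $\rmI_1(\partial_{i+1})$; for the other, if $M/\fka M$ is free over $A/\fka$ then the natural surjection $F_i/\fka F_i\to M/\fka M$ is a surjective endomorphism of a finite free $A/\fka$-module (the two ranks agree because the entries of $\partial_{i+1}$ lie in $\fkm$), hence an isomorphism, forcing $\partial_{i+1}\equiv0\pmod\fka$. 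Since this smallest ideal depends only on the isomorphism class of $M$, we obtain $\rmI_1(\partial_{i+1})=\rmI_1(\partial_{i+1}')$.

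To conclude, since $\mu_A(I)>d$ and $\mu_A(J)>d$, Theorem \ref{7.5} identifies $\rmI_1(\partial_{i+1})=I$ and $\rmI_1(\partial_{i+1}')=J$, whence $I=J$. The only step requiring genuine care is the key lemma above --- the intrinsic characterization, equivalently the invariance, of $\rmI_1$ of a free presentation with coefficients in $\fkm$; once that and Theorem \ref{7.5} are in hand the rest is formal, so I do not anticipate a real obstacle. If one prefers, Corollary \ref{7.4}(1) may be invoked first to reduce to the stable range $i\ge d$, but this is not needed for the argument.
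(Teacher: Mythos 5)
Your proof is correct and follows essentially the same route as the paper, whose (unstated) argument is to identify $I$ with $\rmI_1$ of a minimal presentation of $\Syz_A^i(A/I)$ via Theorem~\ref{7.5}. Your intrinsic characterization of $\rmI_1(\partial_{i+1})$ as the smallest proper ideal $\fka$ with $M/\fka M$ free over $A/\fka$ cleanly supplies the invariance the paper leaves implicit (one could also simply invoke the uniqueness of minimal free resolutions over a Noetherian local ring), and you are right that Corollary~\ref{7.4}, which the paper also cites, is not actually needed.
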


For a given Cohen--Macaulay local ring $A$ let
${\mathcal X}_A$ denote the set of Ulrich ideals $I$ of $A$ which contains parameter ideals as reductions but $\mu_A(I) > d$. Then as a consequence of Corollary \ref{7.6}, we get the following. Remember that $A$ is said to be of finite CM-representation type, if there exist only finitely many isomorphism classes of indecomposable maximal Cohen--Macaulay $A$--modules.

\begin{thm}\label{7.7}
If $A$ is of  finite CM--representation type, then the set $\calX_A$ is finite.
\end{thm}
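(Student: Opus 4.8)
The plan is to derive Theorem \ref{7.7} directly from Corollary \ref{7.6}. The key observation is that for an Ulrich ideal $I \in \calX_A$, the first syzygy module $\Syz_A^1(A/I)$ is a maximal Cohen--Macaulay $A$-module. Indeed, since $I$ is $\fkm$-primary, $A/I$ has finite length, so $\depth_A(A/I) = 0$ when $d > 0$ and hence $\depth_A \Syz_A^1(A/I) \ge \min\{d, \depth_A(A/I)+1\}$; more to the point, because $I$ contains a parameter ideal $Q$ as a reduction and $I^2 = QI$, Theorem \ref{3.2} (applied with $i = d$) — or more simply, the fact that $\Syz_A^i(A/I)$ for $i \ge d$ is an Ulrich module and hence maximal Cohen--Macaulay — gives us honest maximal Cohen--Macaulay modules attached to each $I$. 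When $d = 0$ every module is trivially maximal Cohen--Macaulay, so in all cases we may pick an integer $i_0 \ge d$ (say $i_0 = d$, or $i_0 = 1$ when $d = 0$) and associate to each $I \in \calX_A$ the maximal Cohen--Macaulay module $\Syz_A^{i_0}(A/I)$.

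First I would fix such an $i_0$ and define a map $\Phi : \calX_A \to \{\text{iso.\ classes of MCM } A\text{-modules}\}$ by $\Phi(I) = [\Syz_A^{i_0}(A/I)]$. By Theorem \ref{3.2} (resp.\ the trivial case $d = 0$), each $\Syz_A^{i_0}(A/I)$ is genuinely maximal Cohen--Macaulay, so $\Phi$ is well-defined. The crucial point is injectivity: if $\Phi(I) = \Phi(J)$, i.e.\ $\Syz_A^{i_0}(A/I) \cong \Syz_A^{i_0}(A/J)$, then Corollary \ref{7.6} forces $I = J$. Hence $\Phi$ is injective.

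Next I would invoke the hypothesis of finite CM-representation type. Since each maximal Cohen--Macaulay module decomposes (by Krull--Schmidt, valid over a complete or Henselian local ring, or over any local ring by the standard Krull--Schmidt theorem for finitely generated modules) into finitely many indecomposable maximal Cohen--Macaulay summands, and there are only finitely many isomorphism classes of these indecomposables by assumption, it follows that the set of isomorphism classes of maximal Cohen--Macaulay $A$-modules of any bounded rank is finite. But I should be slightly careful here: the target of $\Phi$ is the set of \emph{all} isomorphism classes of MCM modules, which is countably infinite in general even under finite CM-type (one can take arbitrarily large direct sums). So the cleanest argument is: the $A$-rank of $\Syz_A^{i_0}(A/I)$ equals $\beta_{i_0}^A(A/I)$, which by Theorem \ref{7.1} is a function $f(n, d)$ of $n = \mu_A(I)$ and $d$ alone, and by Corollary \ref{2.6} (part (ii)) we have $n \le \rmr(A) + d$ when $A$ is Gorenstein — but in the general Cohen--Macaulay case I instead note that $n = \mu_A(I) \le \e_\fkm^0(A) + d - 1$ is bounded (since $\ell_A(I/I^2) \le \mu_A(I)\ell_A(A/I)$ and the multiplicity controls things), so $\rank_A \Syz_A^{i_0}(A/I)$ is bounded over all $I \in \calX_A$. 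Thus $\Phi$ lands in the finite set of isomorphism classes of MCM modules of bounded rank, and injectivity of $\Phi$ then gives $|\calX_A| < \infty$.

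The main obstacle I anticipate is establishing the boundedness of $\mu_A(I)$ (equivalently, of the ranks $\beta_{i_0}^A(A/I)$) over all Ulrich ideals $I$ in the general Cohen--Macaulay — not necessarily Gorenstein — setting, since Corollary \ref{2.6}(ii) is stated only under the Gorenstein hypothesis; one needs an argument via the multiplicity $\e_\fkm^0(A)$ or the Cohen--Macaulay type $\rmr(A)$ to bound $\mu_A(I)$. Once that boundedness is in hand, the rest is a short counting argument: an injection from $\calX_A$ into a finite set. I would present the proof in roughly the order: (1) pick $i_0$ and define $\Phi$, recording that the target modules are MCM; (2) invoke Corollary \ref{7.6} for injectivity; (3) bound the ranks and invoke finite CM-type plus Krull--Schmidt to conclude the relevant target set is finite; (4) conclude $\calX_A$ is finite.
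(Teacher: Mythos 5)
Your proposal is essentially the paper's proof: associate to each $I\in\calX_A$ the maximal Cohen--Macaulay module $\Syz_A^d(A/I)$, invoke Corollary \ref{7.6} for injectivity, bound the number of generators of these syzygies, and use finite CM-representation type to conclude that the set of isomorphism classes in the image is finite.

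The one place where your reading of the paper goes astray is the bound on $\mu_A(I)$: the estimate $(n-d)\cdot\rmr(A/I)\le\rmr(A)$, whence $\mu_A(I)\le\rmr(A)+d$, is part $(\mathrm{a})(\mathrm{ii})$ of Corollary \ref{2.6}, and that part holds for an \emph{arbitrary} Cohen--Macaulay local ring. Only part $(\mathrm{b})$ of that corollary carries the Gorenstein hypothesis. So the ``main obstacle'' you flag is not present, and the fallback through $\e_\fkm^0(A)$ is unnecessary; as stated it is also not quite a proof, since $\mu_A(I)\le\e_\fkm^0(A)+d-1$ is Abhyankar's bound for $I=\fkm$, and bounding $\mu_A(I)$ for a general $\fkm$-primary ideal by the multiplicity requires Sally's more involved estimates. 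With Corollary \ref{2.6}$(\mathrm{a})(\mathrm{ii})$ in hand, Theorem \ref{7.1} gives $\beta_d^A(A/I)=(\mu_A(I)-d+1)^d\le(\rmr(A)+1)^d$ uniformly over $\calX_A$, and the argument closes exactly as you describe.
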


\begin{proof}
Let $I \in \calX_A$.
Then $\mu_A(I) \le \rmr (A) + d$ (Corollary \ref{2.6}). Let $$\calS = \{\left[\Syz_A^d(A/I) \right] \mid I \in \calX_A \},$$
where $\left[\Syz_A^d(A/I) \right]$ denotes the isomorphism class of the maximal Cohen--Macaulay $A$--module $\Syz_A^d(A/I)$. Then because $\beta_d^{A}(A/I) = (\mu_A(I) - d + 1)^d \le \left[\rmr (A) +1\right]^d,$
the minimal number $\mu_A(\Syz_A^d(A/I))$ of generators for  $\Syz_A^d(A/I)$ has an upper bound which is independent of the choice of $I \in \calX_A$. Hence the set $\calS$ is finite, because $A$ is of  finite CM-representation type. Thus the set $\calX_A$ is also finite, because $\calX_A$ is a subset of $\calS$ by Corollary \ref{7.6}.
\end{proof}

Let us explore the following example in order to illustrate Theorem \ref{7.7}.

\begin{ex}\label{7.8}
Let $A = k[[X, Y, Z]]/(Z^2 - XY)$ where $k[[X,Y,Z]]$ is the formal power series ring over a field $k$. Then $\calX_A = \{\fkm \}$.
\end{ex}

\begin{proof}
Let $x, y$, and $z$ be the images of $X, Y$, and $Z$ in $A$, respectively. Then $\fkm^2 = (x,y)\fkm$, so that $\fkm \in \calX_A$. Let $I \in \calX_A$ and put $M = \Syz_A^2(A/I)$. Then $\mu_A(I) = 3$ (Corollary \ref{2.6}), $\rank_AM = 2$, and $\mu_A(M)=4$ (Theorem \ref{7.1}). Therefore, because $A$ and $\fkp = (x,z)$ are the indecomposable maximal Cohen--Macaulay $A$--modules (up to isomorphism), we get $M \cong \fkp \oplus \fkp \cong \Syz_A^2(A/\fkm),$ so that $I = \fkm$ by Corollary \ref{7.6}. Thus $\calX_A = \{\fkm \}$ as claimed.
\end{proof}

\section{Relations of Ulrich modules to linear resolutions}\label{linear}

In this section we discuss the relation of Ulrich modules to linear free resolutions.
We fix some notation all over again as follows.
Let $A$ be a commutative ring with nonzero identity.
Let $I$ be an ideal of $A$.
Put
$$R=\rmR(I)=A[It], \ \  R'=\rmR'(I)=A[It,t^{-1}] \subseteq A[t,t^{-1}], \ \ \mbox{and} \ \ \gr_I(A)=R'/t^{-1}R'$$
where $t$ stands for an indeterminate over the ring $A$.

\begin{defn}\label{9.1}
For an $A$-module $M$, ${\calF}=\{ M_n \}_{n \in \Z}$ is an $I$-filtration of $M$ if
\begin{itemize}
\item[(1)] $M_n$ is a submodule of $M$ for all $n \in \Z$,
\item[(2)] $M=M_0$, and
\item[(3)] $M_n \supseteq M_{n+1} \supseteq IM_n$ for all $n \in \Z$.
\end{itemize}
When this is the case, we say that the $A$-module $M$ is $I$-filtered with respect to $\calF$.
\end{defn}

We put
\begin{align*}
\rmR'(\calF)&=\sum_{n \in \Z}\{ t^n \otimes x \ | \ x \in M_n \} \subseteq A[t,t^{-1}] \otimes_A M,\\
\rmR(\calF)&=\sum_{n \geq 0}\{ t^n \otimes x \ | \ x \in M_n \} \subseteq A[t] \otimes_A M,\\
\gr(\calF)&=\rmR'(\calF)/t^{-1}\rmR'(\calF)
\end{align*}
for an $I$-filtration $\calF=\{ M_n \}_{n \in \Z}$ of $M$.

Let $L$ and $M$ are $A$-modules and $f:L \to M$ be an $A$-linear map.
Suppose that $\{L_n\}_{n \in \Z}$ and $\{M_n\}_{n \in \Z}$ are $I$-filtrations of $L$ and $M$, respectively, and assume that $f(L_n) \subseteq M_n$ for all $n \in \Z$.
Then we have linear maps of graded modules $ \rmR'(f): \rmR'(\{L_n\}_{n \in \Z}) \to \rmR'(\{M_n\}_{n \in \Z})$, $ \rmR(f): \rmR(\{L_n\}_{n \geq 0}) \to \rmR(\{M_n\}_{n \geq 0})$ and $\gr(f): \gr(\{L_n\}_{n \in \Z}) \to \gr(\{M_n\}_{n \in \Z})$, which are induced by the linear map $A[t,t^{-1}] \otimes_A f: A[t,t^{-1}] \otimes L \to A[t,t^{-1}] \otimes M$.

Let us note the following lemma.

\begin{lem}\label{9.2}
Let $L$, $M$, and $N$ be $A$-modules.
Let $\{L_n\}_{n \in \Z}$, $\{M_n\}_{n \in \Z}$, and $\{N_n\}_{n \in \Z}$ be $I$-filtrations of $L$, $M$, and $N$, respectively.
Then the following hold.
\begin{itemize}
\item[(1)] Let $f: L \to M$ and $g : M \to N$ be $A$-linear maps and assume that $f(L_n) \subseteq M_n$ and $g(M_n) \subseteq N_n$ for $n \in \Z$.
Then we have $\gr(g)\circ \gr(f)=\gr(g \circ f)$.
\item[$(2)$] We have $\gr(1_M)=1_{\gr(\{M_n \}_{n \in \Z})}$, where $1_M$ and $1_{\gr(\{M_n \}_{n \in \Z})}$ denote the identity maps of $M$ and $\gr(\{M_n \}_{n \in \Z})$ respectively. 
\item[(3)] Put $[L \oplus M]_n=L_n \oplus M_n$ for $n \in \Z$.
Then $\{ [L \oplus M]_n \}_{n \in \Z}$ is an $I$-filtration of $L \oplus M$ and we have 
$$\gr(\{ [L \oplus M]_n \}_{n \in \Z}) \cong \gr(\{ L_n\}_{n \in \Z}) \oplus \gr(\{M_n \}_{n \in \Z})$$
as graded $\gr_I(A)$-modules.
\item[(4)] Let $X$ be a submodule of $M$. Then $\{ M_n \cap X \}_{n \in Z}$ forms an $I$-filtration of $X$.
\end{itemize}
\end{lem}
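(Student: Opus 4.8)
The plan is to reduce everything to two formal facts: that $A[t,t^{-1}]\otimes_A(-)$ is a functor, and that passing to the quotient by $t^{-1}$ (first to the graded submodule $\rmR'(-)$ of $A[t,t^{-1}]\otimes_A(-)$, then modulo $t^{-1}\rmR'(-)$) is compatible with composition and with direct sums.

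First I would record the observation underlying (1) and (2). Whenever $f\colon L\to M$ is $A$-linear with $f(L_n)\subseteq M_n$ for all $n$, the map $A[t,t^{-1}]\otimes_A f$ sends $t^n\otimes x$ to $t^n\otimes f(x)$, hence restricts to a graded map $\rmR'(f)\colon\rmR'(\{L_n\})\to\rmR'(\{M_n\})$, and this restriction commutes with multiplication by $t^{-1}$, so it descends to $\gr(f)$, which is explicitly $\gr(f)(\overline{t^n\otimes x})=\overline{t^n\otimes f(x)}$. Since $A[t,t^{-1}]\otimes_A(g\circ f)=(A[t,t^{-1}]\otimes_A g)\circ(A[t,t^{-1}]\otimes_A f)$ and $A[t,t^{-1}]\otimes_A 1_M=1_{A[t,t^{-1}]\otimes_A M}$, restricting to the graded submodules $\rmR'(-)$ and then passing to the quotients gives (1) and (2) at once. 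For (3) I would first check that $\{L_n\oplus M_n\}_{n\in\Z}$ satisfies the three axioms of Definition \ref{9.1} --- in particular $L_n\oplus M_n\supseteq L_{n+1}\oplus M_{n+1}\supseteq IL_n\oplus IM_n=I(L_n\oplus M_n)$ --- and then observe that under the canonical identification $A[t,t^{-1}]\otimes_A(L\oplus M)=(A[t,t^{-1}]\otimes_A L)\oplus(A[t,t^{-1}]\otimes_A M)$ one has $\rmR'(\{L_n\oplus M_n\})=\rmR'(\{L_n\})\oplus\rmR'(\{M_n\})$ as graded submodules, because $t^n\otimes(x,y)$ lies in the left-hand side iff $x\in L_n$ and $y\in M_n$; since multiplication by $t^{-1}$ respects this decomposition, the quotient by $t^{-1}\rmR'(-)$ yields the asserted isomorphism of graded $\gr_I(A)$-modules. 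Finally, for (4) I would just verify the three axioms for $\{M_n\cap X\}_{n\in\Z}$: each $M_n\cap X$ is a submodule of $X$; $M_0\cap X=M\cap X=X$; and $M_n\cap X\supseteq M_{n+1}\cap X\supseteq(IM_n)\cap X\supseteq I(M_n\cap X)$, the last inclusion holding because $I(M_n\cap X)\subseteq IM_n$ and, $X$ being a submodule, $I(M_n\cap X)\subseteq X$.

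There is no serious obstacle here; this lemma is bookkeeping that sets up the filtered category in which Theorem \ref{9.5} will be proved. The only points that deserve a moment's care are the well-definedness assertions in (1)--(2) --- that $\gr(f)$ depends only on $f$ once $f(L_n)\subseteq M_n$ is assumed, and that it is a homomorphism of graded $\gr_I(A)$-modules --- and both follow immediately from the explicit formula $\gr(f)(\overline{t^n\otimes x})=\overline{t^n\otimes f(x)}$ and from the fact that $\rmR'(I)$ acts by multiplication on the nose inside $A[t,t^{-1}]\otimes_A M$.
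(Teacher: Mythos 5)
Your proposal is correct, and the argument is the natural one. The paper itself gives no proof of Lemma \ref{9.2} (it is stated and immediately followed by Definition \ref{9.3}), so there is nothing to compare against; the lemma is treated there as routine bookkeeping. Your reduction to the functoriality of $A[t,t^{-1}]\otimes_A(-)$, the explicit formula $\gr(f)(\overline{t^n\otimes x})=\overline{t^n\otimes f(x)}$, compatibility of $\rmR'(-)$ and the quotient by $t^{-1}$ with composition and direct sums, and the direct verification of the filtration axioms in (3) and (4) supplies exactly the missing details and is sound throughout.
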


\begin{defn}\label{9.3}
Let $f: L \to M$ be an $A$-linear map of $A$-modules $L$ and $M$, and assume that the $A$-modules $L$ and $M$ are $I$-filtered with respect to $\{L_n\}_{n \in \Z}$ and $\{M_n\}_{n \in \Z}$ respectively.
Then the map $f: L \to M$ is strict, if we have $f(L_n) = f(L) \cap M_n$ for all $n \in \Z$.
\end{defn}

Concerning a strict $A$-linear map of $I$-filtered modules, we note the following lemma.

\begin{lem}\label{9.4}
Let $L \overset{f}{\to} M \overset{g}{\to} N$ be an exact sequence of $A$-modules and assume that $L$, $M$, and $N$ are $I$-filtered $A$-modules with respect to $\{L_n\}_{n \in \Z}$, $\{M_n\}_{n \in \Z}$, and $\{N_n\}_{n \in \Z}$, respectively.
Then 
$$\gr(\{L_n\}_{n \in \Z}) \overset{\gr(f)}{\to} \gr(\{M_n\}_{n \in \Z}) \overset{\gr(g)}{\to} \gr(\{N_n\}_{n \in \Z})$$ 
forms an exact sequence of graded $\gr_I(A)$-modules, if the $A$-linear maps $f$ and $g$ are strict.
\end{lem}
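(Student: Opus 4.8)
The plan is to reduce everything to the case of a short exact sequence and then compute directly with the associated graded pieces. First I would observe that $\gr(\{M_n\}) = \bigoplus_{n\in\Z} M_n/M_{n+1}$ with the obvious grading, and that for a filtration-preserving map $h$ the induced map $\gr(h)$ sends the class of $t^n\otimes x$ (with $x\in L_n$) to the class of $t^n\otimes h(x)$. So in degree $n$ the maps $\gr(f)$ and $\gr(g)$ are the maps $L_n/L_{n+1}\to M_n/M_{n+1}\to N_n/N_{n+1}$ induced by $f$ and $g$. It therefore suffices to prove exactness in each fixed degree $n$, i.e. to show $\Ker(\bar g_n) = \Im(\bar f_n)$ where $\bar f_n, \bar g_n$ denote these induced maps.

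Next I would set $K = \Im f = \Ker g$ (using exactness of the original sequence) and filter $K$ by $K_n = K\cap M_n$; by Lemma \ref{9.2}(4) this is an $I$-filtration of $K$. The strictness of $f$ says precisely $f(L_n) = f(L)\cap M_n = K_n$, so $f\colon L\to K$ is surjective and strict onto $K$, hence the induced map $L_n/L_{n+1} \to K_n/K_{n+1}$ is surjective for every $n$; this gives $\Im(\bar f_n) = $ image of $K_n$ in $M_n/M_{n+1}$, which equals $(K_n + M_{n+1})/M_{n+1} = (K\cap M_n + M_{n+1})/M_{n+1}$. On the other side, strictness of $g$ gives $g(M_n) = g(M)\cap N_n$, and I would use this together with $\Ker g = K$ to identify $\Ker(\bar g_n)$: an element $\bar x\in M_n/M_{n+1}$ lies in $\Ker \bar g_n$ iff $g(x)\in N_{n+1}\cap g(M) = g(M_{n+1})$, i.e. $g(x) = g(x')$ for some $x'\in M_{n+1}$, i.e. $x - x'\in K\cap M_n$; thus $\bar x\in (K\cap M_n + M_{n+1})/M_{n+1}$. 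Comparing the two computations yields $\Ker(\bar g_n) = \Im(\bar f_n)$, as desired.

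The main obstacle — really the only subtle point — is the second half of the last step: extracting $\Ker(\bar g_n)$ correctly requires the strictness of $g$ at exactly the right moment, since without it one only gets $g(x)\in N_{n+1}$ rather than $g(x)\in g(M_{n+1})$, and the conclusion $x-x'\in M_n$ for the witness $x'$ would fail. I would make sure to phrase this carefully: given $x\in M_n$ with $g(x)\in N_{n+1}$, note $g(x)\in g(M)\cap N_{n+1} = g(M_{n+1})$ by strictness of $g$, pick $x'\in M_{n+1}$ with $g(x') = g(x)$, and then $x - x'\in\Ker g = \Im f$, while $x-x'\in M_n$ since both $x,x'\in M_n$; finally apply strictness of $f$ to write $x-x' = f(y)$ with $y\in L_n$. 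Everything else is bookkeeping with the grading and an application of Lemma \ref{9.2}(1),(2) to know that $\gr(f)$, $\gr(g)$ are genuine $\gr_I(A)$-module homomorphisms whose composite is zero.
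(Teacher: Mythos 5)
The paper states Lemma \ref{9.4} without proof, treating it as a standard fact about filtered modules, so there is no in-paper argument to compare against. Your proof is correct: the identification $[\gr(\{M_n\})]_n \cong M_n/M_{n+1}$ and the reduction to checking $\Ker(\bar g_n)=\Im(\bar f_n)$ degree by degree are exactly right, and you deploy strictness of $f$ (to get $\Im(\bar f_n)=(K\cap M_n+M_{n+1})/M_{n+1}$ with $K=\Ker g$) and strictness of $g$ (to upgrade $g(x)\in N_{n+1}$ to $g(x)\in g(M_{n+1})$) at precisely the points where each is needed. The only small thing worth making explicit in a final write-up is that $\gr(g)\circ\gr(f)=0$ follows from Lemma \ref{9.2}(1) together with $g\circ f=0$, so that the containment $\Im(\bar f_n)\subseteq\Ker(\bar g_n)$ is automatic and the substance of the argument is the reverse inclusion, which you handle correctly.
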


From now on, let us assume that $A$ is a $d$-dimensional Noetherian local ring with maximal ideal $\m$.
We put $J=\gr_I(A)_+$ and $\calM=\m{\cdot}\gr_I(A)+\gr_I(A)_+$ denotes the unique graded maximal ideal of $\gr_I(A)$.
We denote by $L(\alpha)$, for each $\alpha \in \Z$, the graded module whose grading is given by $[L(\alpha)]_{n}=L_{\alpha+n}$ for all $n \in \Z$.
For a finitely generated $A$-module $M$ and an ideal $I$ of $A$, we put ${\rmR}'_I(M)={\rmR}'(\{I^nM\}_{n \in \Z})$.
Let $\gr_I(M)=\gr(\{I^nM\}_{n \in \Z})$ denote the associated graded module  of $M$ with respect to $I$.

The goal of this section is the following.

\begin{thm}\label{9.5}
Suppose that $A$ is a Cohen--Macaulay local ring and that $I$ is an Ulrich ideal of $A$.
Let $Q$ be a parameter ideal of $A$ which forms a reduction of $I$.
Assume that $M$ is an Ulrich $A$-module with respect to $I$.
Let
$$ \mathbb{F}_{\bullet}: \cdots \to F_i \overset{\partial_i}{\to} F_{i-1} \to \cdots \to F_1 \overset{\partial_1}{\to} F_0 \overset{\varepsilon}{\to} M \to 0 $$
be a minimal free resolution of the $A$-module $M$.
Suppose that $F_i$ is an $I$-filtered $A$-module with respect to ${\calF}_i=\{ I^{n-i}F_i \}_{n \in \Z}$ for $i \geq 0$ and $M$ is an $I$-filtered $A$-module with respect to an $I$-adic filtration $\{I^n M\}_{n \in \Z}$.
Then we have the following.
\begin{itemize}
\item[(1)] The $A$-linear maps $\varepsilon$ and $\partial_i$ are strict for all $i \geq 1$,
\item[(2)] the sequence
$$ \cdots \to \gr({\calF_i}) \overset{\gr(\partial_i)}{\to} \gr({\calF_{i-1}}) \to \cdots \to \gr({\calF_1}) \overset{\gr(\partial_1)}{\to} \gr({\calF_0}) \overset{\gr(\varepsilon)}{\to} \gr_I(M) \to 0 $$
forms a minimal free resolution of the graded $\gr_I(A)$-module $\gr_I(M)$,
\item[(3)] $\rank_AF_i=\mu_A(M)\cdot\{\mu_A(I)-d\}^i$ for all $i \geq 0$, and
\item[(4)] ${\rmI}_1(\partial_i)+Q=I$ for all $i \geq 1$.
\end{itemize}
\end{thm}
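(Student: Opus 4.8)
\emph{Overview and first reductions.} The plan is to climb the minimal free resolution $\mathbb F_\bullet$ by induction on homological degree, using that every syzygy of $M$ is again Ulrich and reducing the whole statement to two Hilbert--function computations. If $M$ is free --- which happens in particular when $I=Q$ --- all four assertions are trivial, so I would assume $M$ is not free; then $I$ is not a parameter ideal, $n:=\mu_A(I)>d$, and $\Syz_A^i(M)\ne0$ for every $i\ge1$ (a maximal Cohen--Macaulay module of finite projective dimension over $A$ is free). Setting $\Syz_A^0(M)=M$ and applying Lemma \ref{4.2} repeatedly to the short exact sequences $0\to\Syz_A^i(M)\to F_{i-1}\to\Syz_A^{i-1}(M)\to0$ read off from $\mathbb F_\bullet$, I get that $\Syz_A^i(M)$ is an Ulrich $A$-module with respect to $I$ for all $i\ge0$; in particular it is maximal Cohen--Macaulay, $I\Syz_A^i(M)=Q\Syz_A^i(M)$, and $\Syz_A^i(M)/I\Syz_A^i(M)$ is $A/I$-free of rank $\mu_A(\Syz_A^i(M))=\beta_i:=\rank_AF_i$.

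\emph{Parts $(4)$ and $(3)$.} Reducing the minimal cover $F_{i-1}\twoheadrightarrow\Syz_A^{i-1}(M)$ modulo $I$ gives a surjection between free $A/I$-modules both of rank $\beta_{i-1}$, hence an isomorphism; so $\Syz_A^i(M)\subseteq IF_{i-1}$, which yields $\rmI_1(\partial_i)\subseteq I$, i.e. $A/I\otimes_A\partial_i=0$. For the reverse inclusion in $(4)$ I would pass to $\overline A=A/Q$: since $M$ is maximal Cohen--Macaulay, $Q$ is $M$-regular, so $\mathbb F_\bullet\otimes_A\overline A$ is a minimal free resolution over the Artinian ring $\overline A$ of $M/QM\cong(A/I)^{\mu_A(M)}$, in which $\overline I=I/Q$ is $A/I$-free with $\overline I^{\,2}=0$ (because $I^2=QI$); for such data a direct computation of the minimal resolution shows $\rmI_1(\overline\partial_i)=\overline I$, i.e. $\rmI_1(\partial_i)+Q=I$. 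For $(3)$, I would reduce the same short exact sequence modulo $Q$ to get $0\to(A/I)^{\beta_i}\to(\overline A)^{\beta_{i-1}}\to(A/I)^{\beta_{i-1}}\to0$ and compare lengths, using $\ell_A(\overline A)=\e_Q^0(A)=\e_I^0(A)=(n-d+1)\ell_A(A/I)$ (Lemma \ref{2.3}); this gives $\beta_i=(n-d)\beta_{i-1}$, hence $\beta_i=\mu_A(M)(n-d)^i$.

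\emph{Part $(1)$, the crux.} The map $\varepsilon$ is strict since $\varepsilon(I^nF_0)=I^nM$. For $\partial_i$, strictness amounts to $I^k\Syz_A^i(M)=\Syz_A^i(M)\cap I^{k+1}F_{i-1}$ for all $k\ge0$; the inclusion $\subseteq$ is immediate from $\Syz_A^i(M)\subseteq IF_{i-1}$, so it is enough to show the two cokernels have equal length. The sequence $0\to\Syz_A^i(M)/(\Syz_A^i(M)\cap I^{k+1}F_{i-1})\to F_{i-1}/I^{k+1}F_{i-1}\to\Syz_A^{i-1}(M)/I^{k+1}\Syz_A^{i-1}(M)\to0$ shows the second cokernel has length $\beta_{i-1}\,\ell_A(A/I^{k+1})-\ell_A\big(\Syz_A^{i-1}(M)/I^{k+1}\Syz_A^{i-1}(M)\big)$. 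Now for any Ulrich module $N$ with respect to $I$ one has $I^tN=Q^tN$ and, as $Q$ is $N$-regular, $\gr_Q(N)\cong(N/QN)[X_1,\dots,X_d]$, so $\ell_A(N/I^tN)=\mu_A(N)\,\ell_A(A/I)\binom{t+d-1}{d}$; and since $\gr_I(A)$ is Cohen--Macaulay with $\rma(\gr_I(A))\le1-d$, its Hilbert series equals $\ell_A(A/I)\,(1+(n-d)z)/(1-z)^d$ (one computes $\ell_A(I/I^2)=n\,\ell_A(A/I)$), whence $\ell_A(A/I^t)=\ell_A(A/I)\big(\binom{t+d-1}{d}+(n-d)\binom{t+d-2}{d}\big)$. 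Substituting, and using $\mu_A(\Syz_A^{i-1}(M))=\beta_{i-1}$, both cokernels come out of length $(n-d)\beta_{i-1}\,\ell_A(A/I)\binom{k+d-1}{d}$, the desired equality being exactly the identity $\beta_i=(n-d)\beta_{i-1}$ from $(3)$. I expect this length bookkeeping to be the main obstacle; once it is in place the remaining steps are formal.

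\emph{Part $(2)$.} Filter $\Syz_A^i(M)$ by $\{I^{n-i}\Syz_A^i(M)\}_{n\in\Z}$; strictness from $(1)$ makes both the inclusion $\Syz_A^i(M)\hookrightarrow F_{i-1}$ and the cover $F_{i-1}\twoheadrightarrow\Syz_A^{i-1}(M)$ strict, and the associated graded of a strict monomorphism (resp. epimorphism) is again a monomorphism (resp. epimorphism). Applying Lemma \ref{9.4} to each short exact sequence $0\to\Syz_A^i(M)\to F_{i-1}\to\Syz_A^{i-1}(M)\to0$ therefore produces short exact sequences of graded $\gr_I(A)$-modules, and these splice to the displayed complex, in which $\gr(\calF_i)\cong\bigoplus^{\beta_i}\gr_I(A)(-i)$ is free and $\gr(\varepsilon)$ is the canonical surjection onto $\gr_I(M)$. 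It is minimal because every entry of $\gr(\partial_i)$ lies in $[\gr_I(A)]_1\subseteq\calM$. Since $\beta_i=\mu_A(M)(n-d)^i=r_i$, this is precisely the resolution asserted in the theorem.
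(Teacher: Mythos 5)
Your proposal is correct, and it reaches the conclusion by a genuinely different route than the paper. The paper first proves a structural statement (Theorem~\ref{9.8}): \emph{if} the minimal graded free resolution of $\gr_I(M)$ over $\gr_I(A)$ already has strictly increasing shifts $a_i$, then the filtration on $\mathbb F_\bullet$ is strict and its associated graded is that graded resolution. It then verifies $a_i=i$ by induction on $d$, reducing modulo a superficial element $f=at$ via Lemma~\ref{9.10} and using Lemma~\ref{9.11} for the base $d=0$. You instead work entirely on the $A$-module side: you iterate Lemma~\ref{4.2} to make every syzygy $\Syz_A^i(M)$ an Ulrich module, compute the Betti numbers directly by reducing the short exact sequences $0\to\Syz_A^i(M)\to F_{i-1}\to\Syz_A^{i-1}(M)\to0$ modulo $Q$ (giving $\beta_i=(n-d)\beta_{i-1}$ by a length count, and $\rmI_1(\partial_i)+Q=I$ from the Artinian reduction where $\overline I^{\,2}=0$), then prove strictness by comparing the two Hilbert functions $\ell_A(\Syz_A^i(M)/I^k\Syz_A^i(M))$ and $\ell_A\big(\Syz_A^i(M)/(\Syz_A^i(M)\cap I^{k+1}F_{i-1})\big)$ --- these agree exactly because $\beta_i=(n-d)\beta_{i-1}$ --- and finally assemble the graded resolution from Lemma~\ref{9.4}, identifying $\gr(\calF_i)\cong\bigoplus^{\beta_i}\gr_I(A)(-i)$ and checking minimality from the degree of the entries. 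Your route is more elementary and self-contained (no superficial element, no Lemma~\ref{9.10}/Lemma~\ref{9.11}), at the cost of the Hilbert-function bookkeeping; the paper's route isolates the reusable criterion Theorem~\ref{9.8}, which applies beyond the linear case, but then needs the $d$-induction to verify its hypothesis. Both arguments are sound, and your derivation of the Hilbert series of $\gr_I(A)$ (Cohen--Macaulay with $h$-polynomial $\ell_A(A/I)(1+(n-d)z)$, since $\gr_{\overline I}(\overline A)=\overline A/\overline I\oplus\overline I$ with $\overline I\cong(A/I)^{n-d}$) and of $\ell_A(N/I^tN)$ for Ulrich $N$ are correct.
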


Therefore the associated graded module $\gr_I(M)$ of an Ulrich module $M$ with respect to an Ulrich ideal $I$ has a minimal free resolution
$$ \cdots \to \bigoplus^{r_i}\gr_I(A)(-i) \to \cdots \to \bigoplus^{r_1}\gr_I(A)(-1) \to \bigoplus^{r_0}\gr_I(A) \to \gr_I(M) \to 0 $$
of graded $\gr_I(A)$-modules, where $r_i$ denotes the $i$-th Betti number $\beta_i^A(M)$ of $M$ for $i \geq 0$.

Before giving a proof of Theorem \ref{9.5}, let us begin with the following.

\begin{lem}\label{9.6}
Suppose that $M$ is a finitely generated $I$-filtered $A$-module with respect to $\calF=\{ M_n \}_{n \in \Z}$.
Assume that the $I$-filtration $\calF=\{ M_n \}_{n \in \Z}$ of $M$ is stable, that is an $I$-filtration with $M_{n+1}=IM_n$ for all $n \gg 0$.
If
$\gr(\calF)=\gr_I(A) \cdot [\gr(\calF)]_q$
for some $q \in \Z$ then we have $M_n=I^{n-q}M$ for all $n \in \Z$.
\end{lem}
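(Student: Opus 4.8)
The plan is to exploit the stability of the filtration $\calF$ together with the generation hypothesis $\gr(\calF) = \gr_I(A)\cdot[\gr(\calF)]_q$ to pin down each $M_n$. First I would observe that stability gives an integer $N$ with $M_{n+1} = IM_n$ for all $n \ge N$; in particular $M_n = I^{n-N}M_N$ for $n \ge N$, and since $M = M_0$ we also have $M_n \supseteq I^n M$ for all $n$ (using condition (3) of Definition \ref{9.1} repeatedly from $M_0 = M$). The reverse direction is what needs the hypothesis. I would translate the statement $\gr(\calF) = \gr_I(A)\cdot[\gr(\calF)]_q$ into the filtration language: the degree-$q$ part $M_q/M_{q+1}$ generates $\gr(\calF)$ over $\gr_I(A)$, which means concretely that for every $n$ one has
$$
M_n = I^{n-q}M_q + M_{n+1} \qquad (n \in \Z),
$$
where for $n < q$ the term $I^{n-q}M_q$ should be read as the whole of $M_n$ (equivalently, $[\gr(\calF)]_n = 0$ for $n < q$, so $M_n = M_{n+1}$ there). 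The first task is to record this equivalence carefully, since it is the concrete meaning of the graded-generation hypothesis.

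Next I would run an ascending induction to kill the error term $M_{n+1}$. For $n \ge q$, iterating $M_n = I^{n-q}M_q + M_{n+1}$ gives $M_n = I^{n-q}M_q + M_{n+k}$ for every $k \ge 1$; taking $k$ large enough that $n + k \ge N$ and $M_{n+k} = I^{n+k-q}M_q \subseteq I^{n-q}M_q$, we conclude $M_n = I^{n-q}M_q$ for all $n \ge q$. In particular $M_q = M_q$ trivially and, reading off $n = 0$ is not yet available; so I would separately handle $n < q$. For $n < q$ we showed $M_n = M_{n+1}$, hence by downward iteration $M_n = M_{n+1} = \cdots = M_q$ for all $n \le q$; combined with $M = M_0$ this forces $M = M_q$, and therefore $M_n = M = M_q$ for all $n \le q$ while $M_n = I^{n-q}M_q = I^{n-q}M$ for $n \ge q$. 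Since $I^{n-q}M = M$ when $n \le q$ (negative exponents meaning the whole module by convention, or simply because $M_n = M$), the two cases merge into the single formula $M_n = I^{n-q}M$ for all $n \in \Z$, which is the claim. One should double-check that the convention for $I^j$ with $j \le 0$ being $A$ (so $I^j M = M$) is consistent with the grading convention used for $\gr_I(A)$ and $L(\alpha)$ in the paragraph preceding the lemma; this is a bookkeeping point rather than a mathematical one.

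The main obstacle I anticipate is purely the translation step: making the passage between "$\gr(\calF)$ is generated in degree $q$ as a $\gr_I(A)$-module" and the filtration identity $M_n = I^{n-q}M_q + M_{n+1}$ watertight, including the degenerate low-degree range $n < q$ where $[\gr(\calF)]_n$ vanishes. Once that dictionary is in place, the rest is a two-line Artin--Rees/Nakayama-style iteration using stability, with no deep input needed — in particular no Cohen--Macaulay or Noetherian-local hypothesis beyond finite generation, which is what makes the lemma a clean technical tool for the proof of Theorem \ref{9.5}.
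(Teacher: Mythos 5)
Your plan is correct and follows essentially the same route as the paper: translate the generation hypothesis to $M_n=I^{n-q}M_q+M_{n+1}$, iterate to push the error term to arbitrarily high index, and then invoke stability to absorb it, finishing with $M=M_0=M_q$. One small imprecision worth fixing in the write-up: stability gives $M_{n+k}=I^{n+k-N}M_N$ for $n+k\ge N$, not $M_{n+k}=I^{n+k-q}M_q$ as you wrote (the integer $N$ witnessing stability need not be $q$); what you actually get, and what is enough, is the chain of inclusions $M_{n+k}=I^{n+k-N}M_N\subseteq I^{n+k-N}M_q\subseteq I^{n-q}M_q$ once $N\ge q$ (harmless after enlarging $N$) and $k\ge N-q$, exactly as the paper's Claim yields.
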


\begin{proof}
We have $\gr(\calF)=\sum_{i=1}^{\ell}\gr_I(A){\cdot}\xi_i$ for some $\ell >0$ and $\xi_i \in [\gr(\calF)]_q$ with $1 \leq i \leq \ell$.
Write $\xi_i=\overline{t^q \otimes x_i}$ with $x_i \in M_q$ for $1 \leq i \leq \ell$, where $\overline{t^q \otimes x_i}$ denotes the image of ${t^q \otimes x_i} \in \rmR'(\calF)$ in $\gr(\calF)$.
Let $n \in \Z$ and take $x \in M_n$, then we have
$$\overline{t^n \otimes x}=\sum_{i=1}^{\ell}{c_it^{n-q}}\xi_i =\sum_{i=1}^{\ell}{c_it^{n-q}} \{\overline{t^q \otimes x_i}\} \\
= \sum_{i=1}^{\ell} \overline{t^n \otimes c_ix_i}
=\overline{t^n \otimes \Sigma_{i=1}^{\ell} c_ix_i} \in \gr(\calF),$$
for some $c_i \in I^{n-q}$ with $1 \leq i \leq \ell$ where $\overline{t^n \otimes c_ix_i}$, $\overline{t^n \otimes \Sigma_{i=1}^{\ell}c_ix_i}$ denote the images of ${t^n \otimes c_ix_i}, \ {t^n \otimes \Sigma_{i=1}^{\ell}c_ix_i} \in \rmR'(\calF)$ in $\gr(\calF)$, respectively.
Therefore we have $x -\sum_{i=1}^{\ell}c_ix_i \in M_{n+1}$.
Thus we get $M_n \subseteq I^{n-q}M_q+M_{n+1}$ for all $n \in \Z$.

We furthermore have the following claim.

\begin{claim}\label{claim}
$M_n \subseteq I^{n-q}M_q+M_m$ for all $n$, $m \in \Z$.
\end{claim}

\begin{proof}[Proof of Claim \ref{claim}]
We proceed by induction on $m$.
It is clear that the case where $m \leq n$.
Assume that $m \geq n+1$ and that our assertion holds true for $m-1$.
Then, by the hypothesis of induction on $m$, we have $M_n \subseteq I^{n-q}M_q+M_{m-1}$.
We also have $M_{m-1} \subseteq I^{m-1-q}M_q +M_m$ by the above argument.
Hence, since $m-1-q \geq n-q$, we have
$$ M_n \subseteq I^{n-q}M_q+M_{m-1} \subseteq I^{n-q}M_q +\{I^{m-1-q}M_q +M_m\} \subseteq I^{n-q}M_q+M_m$$
as required.
\end{proof}

By Claim \ref{claim} we have
$$M_n \subseteq I^{n-q}M_q+M_m =I^{n-q}M_q,$$
for all $m \gg 0$ because the $I$-filtration $\calF=\{M_n \}_{n \in \Z}$ of $M$ is stable so that $M_m \subseteq I^{n-q}M_q$ for all $m \gg 0$.
Thus, we have $M=M_0=M_q$, and whence $M_n=I^{n-q}M$ for all $n \in \Z$.
\end{proof}

Let $M$ be a finitely generated $A$-module and 
$$ \mathbb{F}_{\bullet}: \cdots \to F_i \overset{\partial_i}{\to} F_{i-1} \to \cdots \to F_1 \overset{\partial_1}{\to} F_0 \overset{\varepsilon}{\to} M \to 0   $$
be a minimal free resolution of the $A$-module $M$.
We then have the following.

\begin{lem}\label{9.7}
Let $\ell \in \Z$.
Suppose that $F_0$ and $M$ are $I$-filtered $A$-modules with respect to $\calF_0=\{I^{n-\ell}F_0\}_{n \in \Z}$ and $\calF=\{I^{n-\ell}M\}_{n \in \Z}$ respectively.
Then the following assertions hold true.
\begin{itemize}
\item[(1)] The $A$-linear map $\varepsilon: F_0 \to M$ is strict, 
\item[(2)] we have an epimorphism $\gr(\varepsilon):\gr(\calF_0) \to \gr(\calF)$ of graded $\gr_I(A)$-modules, and
\item[(3)] $\ker \gr(\varepsilon) \subseteq {\calM}\cdot \gr(\calF_0)$.
\end{itemize}
\end{lem}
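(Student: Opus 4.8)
The plan is to dispatch assertions (1) and (2) as formal consequences of $\varepsilon$ being a surjective $A$-homomorphism, and then to reduce (3) to the projective-cover property $\ker\varepsilon\subseteq\m F_0$, which is the only place minimality of $\mathbb{F}_{\bullet}$ is needed. Throughout I would use the convention $I^m=A$ for $m\le 0$, so that $[\calF_0]_n=I^{n-\ell}F_0$ equals $F_0$ for $n\le\ell$, and likewise for $\calF$.

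For (1), I would observe that since $\varepsilon$ is surjective and $A$-linear, $\varepsilon([\calF_0]_n)=I^{n-\ell}\varepsilon(F_0)=I^{n-\ell}M=[\calF]_n$ for every $n\in\Z$, while also $\varepsilon(F_0)\cap[\calF]_n=M\cap I^{n-\ell}M=I^{n-\ell}M$; hence $\varepsilon([\calF_0]_n)=\varepsilon(F_0)\cap[\calF]_n$, which is strictness. For (2), the degree-$n$ component of $\gr(\varepsilon)$ is the map $I^{n-\ell}F_0/I^{n-\ell+1}F_0\to I^{n-\ell}M/I^{n-\ell+1}M$ induced by $\varepsilon$, and since $\varepsilon$ carries $I^{n-\ell}F_0$ onto $I^{n-\ell}M$ it is surjective in each degree; thus $\gr(\varepsilon)$ is an epimorphism of graded $\gr_I(A)$-modules. (One could instead obtain (1) and (2) at once from Lemmas \ref{9.2} and \ref{9.4} applied to the right-exact sequence $F_0\overset{\varepsilon}{\to}M\to 0$.)

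For (3), I would first record that $\gr(\calF_0)=\bigoplus_{n\in\Z}I^{n-\ell}F_0/I^{n-\ell+1}F_0$ vanishes in degrees $<\ell$, has degree-$\ell$ component $F_0/IF_0$, and is generated over $\gr_I(A)$ by that component; a short computation of graded pieces then shows $[\calM\cdot\gr(\calF_0)]_n=[\gr(\calF_0)]_n$ for all $n\ge\ell+1$ (already $[\gr_I(A)]_{n-\ell}\cdot[\gr(\calF_0)]_\ell$ exhausts $I^{n-\ell}F_0/I^{n-\ell+1}F_0$) and $[\calM\cdot\gr(\calF_0)]_\ell=\m F_0/IF_0$. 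Hence the inclusion $\ker\gr(\varepsilon)\subseteq\calM\cdot\gr(\calF_0)$ holds automatically in every degree $n\ne\ell$ --- the source is $0$ when $n<\ell$ and the target is the whole component when $n>\ell$ --- and in degree $\ell$ it amounts to: if $x\in F_0$ satisfies $\varepsilon(x)\in IM$, then $x\in\m F_0$. For this I would use $IM=\varepsilon(IF_0)$, so that $\varepsilon(x)=\varepsilon(y)$ for some $y\in IF_0$, giving $x-y\in\ker\varepsilon=\operatorname{Im}\partial_1\subseteq\m F_0$ by minimality, and therefore $x=(x-y)+y\in\m F_0+IF_0=\m F_0$.

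I expect the only real obstacle to be the degree bookkeeping in (3): one has to keep careful track of the shift by $\ell$ together with the convention $I^m=A$ for $m\le 0$ in order to identify the components of $\calM\cdot\gr(\calF_0)$ and thereby see that the statement has content only in degree $\ell$. Once that is in place, the substance of the lemma is nothing more than the minimality condition $\ker\varepsilon\subseteq\m F_0$.
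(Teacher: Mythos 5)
Your proof is correct, and it reaches the same essential point as the paper — the minimality condition $\ker\varepsilon\subseteq\m F_0$ — but by a more explicit, more elementary route. The paper proves (3) with a short diagram argument: it places $\gr(\varepsilon):\gr(\calF_0)\to\gr(\calF)$ over the induced map on fibers $\gr(\calF_0)/\calM\gr(\calF_0)\to\gr(\calF)/\calM\gr(\calF)$, observes that the latter is an isomorphism (both sides are concentrated in degree $\ell$ and equal $F_0/\m F_0\cong M/\m M$, the isomorphism coming from minimality), and concludes by a diagram chase that $\ker\gr(\varepsilon)$ dies in the bottom row, hence lies in $\calM\cdot\gr(\calF_0)$. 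You instead compute $[\calM\cdot\gr(\calF_0)]_n$ degree by degree: it equals all of $[\gr(\calF_0)]_n$ for $n>\ell$, the source vanishes for $n<\ell$, and in degree $\ell$ the claim reduces to the statement that $\varepsilon(x)\in IM$ implies $x\in\m F_0$, which you verify directly from $\ker\varepsilon\subseteq\m F_0$ via $IM=\varepsilon(IF_0)$. The paper's argument is shorter and hides the bookkeeping inside the diagram; yours makes transparent that the lemma has content only in degree $\ell$ and that the substance is nothing more than minimality of the cover. Both are complete proofs; your parts (1) and (2) are also fine and agree with the paper's, which notes strictness is immediate and invokes Lemma \ref{9.4} for the epimorphism.
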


\begin{proof}
It is easy to see that the $A$-linear map $\varepsilon$ is strict.
Therefore, by Lemma \ref{9.4}, we have an epimorphism $\gr(\varepsilon):\gr(\calF_0) \to \gr(\calF)$ of graded $\gr_I(A)$-modules.
Let us now look at the following commutative diagram
\[\begin{array}{cccc}
\gr(\calF_0) & \mapright{\gr(\varepsilon)} &  \gr(\calF) & \to  0\\
\mapdown{\varepsilon_1} & & \mapdown{\varepsilon_2} & \\
\gr(\calF_0)/{\calM}{\cdot}\gr(\calF_0) & \mapright{} & \gr(\calF)/{\calM}{\cdot}\gr(\calF) & \to  0 
\end{array}\]
of graded $\gr_I(A)$-modules, where $\varepsilon_1$ and $\varepsilon_2$ denote canonical maps and the rows are exact sequences.
Then because $\gr(\calF_0)/{\calM}{\cdot}\gr(\calF_0) \cong \gr(\calF)/{\calM}{\cdot}\gr(\calF)$, we get $\ker \gr(\varepsilon) \subseteq {\calM}\cdot \gr(\calF_0)$ as required.
\end{proof}

The following theorem is the key for our argument.

\begin{thm}\label{9.8}
Let
$$ \mathbb{L}_{\bullet}: \cdots \to L_i \to L_{i-1} \to \cdots \to L_1 \to L_0 \to \gr_I(M) \to 0 $$
be a minimal free resolution of the graded $\gr_I(A)$-module $\gr_I(M)$ and put $r_i={\rank}_{\gr_I(A)}L_i$ for all $i \geq 0$.
Assume that $L_i \cong \bigoplus^{r_i}\gr_I(A)(-a_i)$ as graded $\gr_I(A)$-modules for all $i \geq 0$ with $a_0=0$ and $a_i <a_j$, if $i < j$.
Suppose that $F_i$ is an $I$-filtered $A$-module with respect to ${{\calF}_i}=\{I^{n-a_i}F_i\}_{n \in \Z}$ for $i \geq 0$ and $M$ is an $I$-filtered $A$-module with respect to an $I$-adic filtration $\{I^nM\}_{n \in \Z}$ of $M$.
Then we have the following.
\begin{itemize}
\item[(1)] The $A$-linear maps $\varepsilon$ and $\partial_i$ are strict for all $i \geq 1$,
\item[(2)] the sequence
$$\cdots \to \gr(\calF_i) \overset{\gr(\partial_i)}{\to} \gr(\calF_{i-1}) \to \cdots \to \gr(\calF_1) \overset{\gr(\partial_1)}{\to} \gr(\calF_0) \overset{\gr(\varepsilon)}{\to} \gr_I(M) \to 0 $$
forms a minimal free resolution of the graded $\gr_I(A)$-module $\gr_I(M)$, and
\item[(3)] $A/I \otimes \partial_i=0$ for all $i \geq 1$.
\end{itemize}
\end{thm}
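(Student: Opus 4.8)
The plan is to prove (1), (2) and (3) simultaneously by induction on homological degree, carried along the syzygies of $M$. Put $N_0=M$ and $N_i=\Im\partial_i=\Syz_A^i(M)\subseteq F_{i-1}$ for $i\ge1$, so that $\partial_i$ factors as $F_i\twoheadrightarrow N_i\hookrightarrow F_{i-1}$ and there are exact sequences $0\to N_{i+1}\to F_i\to N_i\to 0$. Give $N_0$ the $I$-adic filtration $\calG_0=\{I^nM\}$ and, for $i\ge1$, give $N_i$ the filtration $\calG_i$ inherited from $\calF_{i-1}$, that is $(\calG_i)_n=N_i\cap I^{\,n-a_{i-1}}F_{i-1}$ (an $I$-filtration by Lemma \ref{9.2}(4)). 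The inductive claim, for each $i\ge0$, will be: $\calG_i$ coincides with the shifted adic filtration $\{I^{\,n-a_i}N_i\}_n$, and $\gr(\calG_i)$ is isomorphic as a graded $\gr_I(A)$-module to $\Im(L_i\to L_{i-1})$, the $i$-th syzygy of $\gr_I(M)$ in $\mathbb{L}_\bullet$ (with the convention $\gr(\calG_0)=\gr_I(M)$), in particular generated in the single degree $a_i$. For $i=0$ this is immediate since $M=M_0$ and $a_0=0$; note also that each $L_i\to\Im(L_i\to L_{i-1})$ is a minimal graded free cover because $\mathbb{L}_\bullet$ is minimal.

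Granting the claim for $i$, I would first note that $\bar\partial_i\colon F_i\to N_i$ satisfies $\bar\partial_i(I^{\,n-a_i}F_i)=I^{\,n-a_i}N_i=(\calG_i)_n$, so $\bar\partial_i$ is strict, and hence so is $\partial_i$ as a map into $F_{i-1}$, since $\partial_i((\calF_i)_n)=(\calG_i)_n=N_i\cap(\calF_{i-1})_n=\partial_i(F_i)\cap(\calF_{i-1})_n$; this gives (1) (the case of $\varepsilon$ is Lemma \ref{9.7}). The inclusion $N_{i+1}\hookrightarrow F_i$ is automatically strict for the inherited filtration $\calG_{i+1}=\{N_{i+1}\cap I^{\,n-a_i}F_i\}$, so Lemma \ref{9.4} applied to $0\to N_{i+1}\to F_i\xrightarrow{\bar\partial_i}N_i\to 0$ — together with the fact that the associated graded of a strict surjection (resp.\ injection) of $I$-filtered modules is surjective (resp.\ injective) — yields a short exact sequence $0\to\gr(\calG_{i+1})\to\gr(\calF_i)\to\gr(\calG_i)\to 0$ with $\gr(\calF_i)\cong\bigoplus^{\beta_i}\gr_I(A)(-a_i)$ graded free. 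Minimality of $\mathbb{F}_\bullet$ gives $N_{i+1}\subseteq\m F_i$, which forces $\gr(\calG_{i+1})\subseteq\calM\cdot\gr(\calF_i)$ (for $i=0$ this is precisely Lemma \ref{9.7}(3)). Hence $\gr(\calF_i)\to\gr(\calG_i)$ is a minimal graded free cover; comparing it with the minimal cover $L_i\to\Im(L_i\to L_{i-1})$ from the claim for $i$ forces $\gr(\calF_i)\cong L_i$ and $\gr(\calG_{i+1})\cong\ker(L_i\to L_{i-1})=\Im(L_{i+1}\to L_i)$, which is generated in degree $a_{i+1}$.

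To complete the induction I would then pin down $\calG_{i+1}$ itself. The filtration $\calG_{i+1}$ on $N_{i+1}$ is stable, being the intersection of $N_{i+1}$ with the stable filtration $\{I^{\,n-a_i}F_i\}$ of $F_i$ (Artin--Rees). Since $\gr(\calG_{i+1})=\gr_I(A)\cdot[\gr(\calG_{i+1})]_{a_{i+1}}$ by the previous paragraph, Lemma \ref{9.6} (with $q=a_{i+1}$) gives $(\calG_{i+1})_n=I^{\,n-a_{i+1}}N_{i+1}$ for all $n$, which is the claim for $i+1$. Evaluating this at $n=a_i+1$ yields $N_{i+1}\cap IF_i=(\calG_{i+1})_{a_i+1}=I^{\,a_i+1-a_{i+1}}N_{i+1}=N_{i+1}$, because $a_{i+1}>a_i$ makes the exponent nonpositive; thus $\Im\partial_{i+1}=N_{i+1}\subseteq IF_i$, i.e.\ $A/I\otimes\partial_{i+1}=0$, which is (3). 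Finally, splicing the isomorphisms $\gr(\calF_i)\cong L_i$ together with the matching of kernels shows that $\gr(\calF_\bullet)\to\gr_I(M)$ is a minimal free resolution isomorphic to $\mathbb{L}_\bullet$, which is (2).

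The main obstacle, as I see it, is organizational rather than a single hard estimate: the two halves of the inductive claim are interdependent, so one must extract the identification of $\gr(\calG_{i+1})$ first (using minimality of $\mathbb{F}_\bullet$ and the single-degree generation built into $\mathbb{L}_\bullet$) and only then invoke Lemma \ref{9.6} to recover the filtration $\calG_{i+1}$, taking care to check at each stage that Lemma \ref{9.6} genuinely applies, i.e.\ that the induced filtration on each syzygy is stable and that its associated graded is concentrated in the correct single degree. The hypothesis $a_i<a_j$ for $i<j$ enters exactly at the inclusion $N_{i+1}\subseteq IF_i$, and $a_0=0$ only in the base case.
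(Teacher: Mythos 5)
Your proposal is correct and follows essentially the same route as the paper's own proof. Both arguments proceed by induction on homological degree, use Lemma~\ref{9.4} to pass a strict short exact sequence $0 \to \Syz^{p}_A(M) \to F_{p-1} \to \Syz^{p-1}_A(M) \to 0$ (with the syzygy carrying the filtration inherited from the free module) to an exact sequence of associated gradeds, invoke the single-degree generation of $L_p$ to conclude that $\gr$ of the syzygy is generated in degree $a_p$, and then apply Lemma~\ref{9.6} together with Artin--Rees to recover the shifted $I$-adic filtration on the syzygy; strictness of $\partial_p$ and the minimal-cover statement then come from Lemma~\ref{9.7}, exactly as you do. Your bookkeeping (phrasing the inductive invariant as an identity of two filtrations on $N_i$ together with an identification $\gr(\calG_i)\cong\Im(L_i\to L_{i-1})$) is slightly more explicit than the paper's, and your derivation of (3) by specializing the filtration identity at $n=a_i+1$ is a cosmetic variant of the paper's direct strictness computation, but the content is the same.
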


\begin{proof}
(1) and (2)
Let $\partial_0=\varepsilon$.
We want to show that the map $\partial_p$ is strict and the sequence
$$ \gr(\calF_p) \overset{\gr(\partial_p)}{\to} \gr(\calF_{p-1}) \to \cdots \to \gr(\calF_1) \overset{\gr(\partial_1)}{\to} \gr(\calF_0) \overset{\gr(\partial_0)}{\to} \gr_I(M) \to 0 $$
forms a part of a minimal free resolution of the graded $\gr_I(A)$-module $\gr_I(M)$ for all $p \geq 0$. 
We proceed by induction on $p$.

Suppose $p=0$ then the $A$-linear map $\partial_0:F_0 \to M$ is strict and the sequence 
$\gr(\partial_0) : \gr(\calF_0) \to \gr_I(M) \to 0$
forms a part of a minimal free resolution of the graded $\gr_I(A)$-module $\gr_I(M)$ by Lemma \ref{9.7}.

Assume that $p \geq 1$ and that $A$-linear maps $\partial_{i}$ are strict for all $0 \leq i \leq p-1$ and the sequence
$$ \gr(\calF_{p-1}) \overset{\gr(\partial_{p-1})}{\to} \gr(\calF_{p-2}) \to \cdots \to \gr(\calF_1) \overset{\gr(\partial_1)}{\to} \gr(\calF_0) \overset{\gr(\varepsilon)}{\to} \gr_I(M) \to 0 $$
forms a part of a minimal free resolution of the graded $\gr_I(A)$-module $\gr_I(M)$.
Let $Y=\ker \partial_{p-1} \subseteq F_{p-1}$ and put $Y_n=Y \cap I^{n-a_{p-1}}F_{p-1}$ for all $n \in \Z$.
Suppose that $Y$ is an $I$-filtered $A$-module with respect to $\calY=\{Y_n\}_{n \in \Z}$.
Then it is easy to check that the inclusion map $i_{p-1} : Y \hookrightarrow F_{p-1}$ is strict.
Hence the sequence 
$$ 0 \to \gr(\calY) \overset{\gr(i_{p-1})}{\to} \gr(\calF_{p-1}) \overset{\gr(\partial_{p-1})}{\to} \gr(\calF_{p-2})$$ 
is exact by Lemma \ref{9.4}.
Therefore we have $\gr(\calY)=\gr_I(A)\cdot[\gr(\calY)]_{a_{p}}$ because $L_p=\bigoplus^{r_p}\gr_I(A)(-a_p)$ by our assumption.
Then, thanks to Lemma \ref{9.6}, we have $Y_n=I^{n-a_{p}}Y$ for all $n \in \Z$ because the $I$-filtration $\calY=\{Y_n=Y \cap I^{n-a_{p-1}}F_{p-1}\}_{n \in \Z}$ of $Y$ is stable by the Artin-Rees Lemma.
Let $\tau_{p}: F_{p} \to Y$ be an $A$-linear map such that $\partial_{p}=i_{p-1} \circ \tau_{p}$.
Then the $A$-linear map $\tau_{p} : F_{p} \to Y$ is strict and we have an epimorphism $\gr(\tau_p):\gr(\calF_p) \to \gr(\calY)$ of graded $\gr_I(A)$-modules with $\ker \gr(\tau_p) \subseteq {\calM}\cdot \gr(\calF_p)$ by Lemma \ref{9.7}, because $F_p$ and $Y$ are $I$-filtered $A$-modules with respect to $\calF_p=\{ I^{n-a_p}F_p \}_{n \in \Z}$ and $\calY=\{I^{n-a_p}Y\}_{n \in \Z}$ respectively.
Thus the map $\partial_{p}: F_{p} \to F_{p-1}$ is strict and the sequence
$$ \gr(\calF_{p}) \overset{\gr(\partial_{p})}{\to} \gr(\calF_{p-1}) \overset{\gr(\partial_{p-1})}{\to} \gr(\calF_{p-2}) \to \cdots \to  \gr(\calF_{1}) \overset{\gr(\partial_{1})}{\to} \gr(\calF_0) \overset{\gr(\varepsilon)}{\to} \gr_I(M) \to 0 $$
forms a part of a minimal free resolution of the graded $\gr_I(A)$-module $\gr_I(M)$.
This completes the proof of assertions (1) and (2).

(3)
Since the $A$-linear maps $\partial_i$ are strict by our assertion (1) and $a_i>a_{i-1}$, we have 
$$ \partial_i(F_i)=\partial_i(I^{a_i-a_i}F_i)=\partial_i([F_i]_{a_i})=\partial_i(F_i) \cap [F_{i-1}]_{a_i} \subseteq [F_{i-1}]_{a_i}=I^{a_i-a_{i-1}}F_{i-1} \subseteq IF_{i-1} $$
for all $i \geq 1$ as required.
\end{proof}

The following Corollary \ref{9.9} shows that, for an Ulrich ideal $I$ of $A$, the residue class ring $A/I$ has a linear free resolution.

\begin{cor}\label{9.9}
Suppose that $A$ is a Cohen--Macaulay local ring and assume that $I$ is an Ulrich ideal of $A$.
Let
$$ \mathbb{F}_{\bullet}: \cdots \to F_i \overset{\partial_i}{\to} F_{i-1} \to \cdots \to F_1 \overset{\partial_1}{\to} F_0 \overset{\varepsilon}{\to} A/I \to 0 $$
be a minimal free resolution of the $A$-module $A/I$.
Suppose that $F_i$ is an $I$-filtered $A$-module with respect to ${\calF}_i=\{ I^{n-i}F_i \}_{n \in \Z}$ for $i \geq 1$ and $A/I$ is an $I$-filtered $A$-module with respect to an $I$-adic filtration $\{I^n(A/I)\}_{n \in \Z}$ of $A/I$.
Then we have the following.
\begin{itemize}
\item[(1)] The $A$-linear maps $\varepsilon$ and $\partial_i$ are strict for all $i \geq 1$ and
\item[(2)] the sequence
$$ \cdots \to \gr(\calF_i) \overset{\gr(\partial_i)}{\to} \gr(\calF_{i-1}) \to \cdots \to \gr(\calF_1) \overset{\gr(\partial_1)}{\to} \gr(\calF_0) \overset{\gr(\varepsilon)}{\to} \gr_I(A/I) \to 0 $$
forms a minimal free resolution of the graded $\gr_I(A)$-module $\gr_I(A/I)$.
\end{itemize}
\end{cor}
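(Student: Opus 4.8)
The plan is to derive Corollary \ref{9.9} from Theorem \ref{9.8} once one knows that the graded $\gr_I(A)$-module $\gr_I(A/I)$ admits a \emph{linear} minimal free resolution. The first thing I would record is the trivial but decisive fact that $I\cdot(A/I)=0$, so that $\gr_I(A/I)$ is just $A/I$ sitting in degree $0$; equivalently, as graded $\gr_I(A)$-modules,
\[
\gr_I(A/I)\;=\;\gr_I(A)/\gr_I(A)_+ .
\]
Granting that this module has a minimal free resolution $\mathbb{L}_\bullet$ with $L_i\cong\bigoplus^{r_i}\gr_I(A)(-i)$, the exponents $a_i=i$ have $a_0=0$ and are strictly increasing, the filtrations $\calF_i=\{I^{n-i}F_i\}_{n\in\Z}$ on $F_i$ and $\{I^n(A/I)\}_{n\in\Z}$ on $A/I$ are precisely those required (with $\calF_0$ the $I$-adic filtration on $F_0=A$), and Theorem \ref{9.8}(1),(2) then immediately give (1) and (2) of the Corollary, with $\gr(\calF_i)\cong\gr_I(A)(-i)^{\beta_i^A(A/I)}$.

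Thus the real content to establish is that $S:=\gr_I(A)$ is Koszul over $R:=A/I$, i.e. that $R=S/S_+$ has a linear minimal free resolution over $S$. Here I would use the Ulrich hypothesis: $I^2=QI$, so $S$ is Cohen--Macaulay of dimension $d$ and $I^{m+1}=QI^m$ for all $m\ge 1$. Let $a_1^{\ast},\dots,a_d^{\ast}\in S_1$ be the initial forms of a minimal generating set $a_1,\dots,a_d$ of $Q$ (nonzero by Nakayama). A degree-by-degree computation from $I^{m+1}=QI^m$ shows
\[
S/(a_1^{\ast},\dots,a_d^{\ast})\;\cong\;R\ltimes R^{\,n-d},
\]
the trivial (square-zero) extension: degree $0$ is $R$, degree $1$ is $I/Q\cong R^{\,n-d}$, all higher pieces vanish. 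In particular this quotient has finite length, so $a_1^{\ast},\dots,a_d^{\ast}$ is a homogeneous system of parameters for the Cohen--Macaulay ring $S$, hence an $S$-regular sequence of linear forms. The trivial extension $T:=R\ltimes R^{\,n-d}$ plainly has $R=T/T_+$ resolved linearly, by $\cdots\to T(-i)^{(n-d)^i}\to\cdots\to T(-1)^{\,n-d}\to T\to R\to 0$. Adjoining the regular linear forms $a_1^{\ast},\dots,a_d^{\ast}$ back to $T$ one at a time and invoking the base-change spectral sequence $\Tor^{B/\theta B}_p\!\big(N,\Tor^B_q(B/\theta B,B/B_+)\big)\Rightarrow\Tor^B_{p+q}(N,B/B_+)$ for a degree-$1$ nonzerodivisor $\theta$ of a graded ring $B$ and a $B/\theta B$-module $N$---whose only nonzero rows are $q=0,1$, the $q=1$ row being shifted by $1$ in internal degree---one sees that a linear $B/\theta B$-resolution of $N$ forces $\Tor^B_i(N,B/B_+)$ to lie in internal degree $i$, i.e. forces a linear $B$-resolution. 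Iterating, $R=S/S_+$ acquires a linear minimal free resolution over $S=\gr_I(A)$, which is what I needed.

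One could instead run this inductively on $d$, as elsewhere in the paper: for $d=0$ one simply has $S=R\ltimes R^{\,n}$, and for $d>0$ one passes from $\overline A=A/(a_1)$ to $A$ via $S/(a_1^{\ast})\cong\gr_{\overline I}(\overline A)$ (with $\overline I$ again Ulrich by Lemma \ref{3.3}) and one change-of-rings step, the Betti numbers matching those of Theorem \ref{7.1} by Lemma \ref{3.4}. Either way, the step I expect to cost the most care is this middle one---identifying $S/(a_1^{\ast},\dots,a_d^{\ast})$ precisely and transferring linearity along the regular sequence of linear forms---although both ingredients (that the $a_j^{\ast}$ are nonzerodivisors on $S$, automatic once the quotient is shown Artinian and $S$ is Cohen--Macaulay of dimension $d$, and the hyperplane-section behaviour of linear resolutions) are standard.
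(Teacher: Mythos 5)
Your proof is correct, and its main line genuinely differs from the paper's. The paper also reduces the Corollary to showing $L_i\cong\bigoplus^{r_i}\gr_I(A)(-i)$ and then inducts on $d$; the engine at each inductive step is Lemma \ref{9.10}, a graded analogue of Vasconcelos's splitting from Lemma \ref{3.4}, which produces an explicit direct-sum decomposition $Z_i/fZ_i\cong\Syz^{i-1}_{\gr_{\overline I}(\overline A)}(\gr_I(A)/J)(-1)\oplus\Syz^i_{\gr_{\overline I}(\overline A)}(\gr_I(A)/J)$ from which the linear generation of $L_i$ is read off. You instead kill the full regular sequence of linear initial forms $a_1^{\ast},\dots,a_d^{\ast}$ at once, identify the Artinian reduction of $S=\gr_I(A)$ as the square-zero extension $R\ltimes R^{n-d}$, whose Koszulness is immediate from $T_+\cong(T/T_+)^{n-d}(-1)$, and lift linearity back up $d$ steps by the change-of-rings spectral sequence for a degree-one nonzerodivisor. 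Both arguments feed on the same facts (that $I^2=QI$ makes $\gr_I(A)$ Cohen--Macaulay with the $a_j^{\ast}$ a regular sequence, and that $I/Q\cong(A/I)^{n-d}$), but you substitute a soft spectral-sequence transfer for the paper's sharper Lemma \ref{9.10}: the paper obtains an actual splitting of each reduced syzygy, whereas you obtain only linearity of the resolution, which is exactly all that Theorem \ref{9.8} needs. Your alternative inductive sketch (one linear form at a time, via Lemma \ref{3.3} and one change-of-rings step) is essentially the paper's induction with the spectral-sequence substitution made at each step.
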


Therefore, for the associated graded ring $\gr_I(A)$ of an Ulrich ideal $I$ of $A$,
$$ \cdots \to \bigoplus^{\beta_i}\gr_I(A)(-i) \to \cdots \to \bigoplus^{\beta_1}\gr_I(A)(-1) \to \bigoplus^{\beta_0}\gr_I(A) \to \gr_I(A)/J \to 0 $$
forms a minimal free resolution of the graded $\gr_I(A)$-module $\gr_I(A)/J$, where $\beta_i$ denotes the $i$-th Betti number $\beta_i^A(A/I)$ of $A/I$ for $i \geq 0$.
We then have $\beta_0=1$ and $\beta_i=\binom{d}{i}+(\mu_A(I)-d){\cdot}\beta_{i-1}$ for $i \geq 1$ by Theorem \ref{7.1}.

In the proof of Corollary \ref{9.9}, we need the following lemma.

\begin{lem}\label{9.10}
Suppose that $A$ is a Cohen--Macaulay local ring with $d>0$.
Let $I$ be an $\m$-primary ideal of $A$ and let $Q$ be a parameter ideal of $A$ which forms a reduction of $I$.
Put $a \in Q \backslash \m Q$, $f=at \in R$, and $Z_i=\Syz_{\gr_I(A)}^i(\gr_I(A)/J)$ for $i \geq 0$.
Assume that $I/I^2$ is $A/I$-free and $f$ is a $\gr_I(A)$-regular element, then we have an isomorphism
$$ Z_i/fZ_i \cong \Syz_{\gr_I(A)/f{\cdot}\gr_I(A)}^{i-1}(\gr_I(A)/J)(-1)\oplus\Syz_{\gr_I(A)/f{\cdot}\gr_I(A)}^i(\gr_I(A)/J) $$
of graded $\gr_I(A)$-modules for all $i \geq 1$.
\end{lem}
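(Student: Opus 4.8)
The plan is to mirror, in the associated graded setting, the argument of Lemma~\ref{3.4}, whose heart was Vasconcelos's splitting $I/aI\cong A/I\oplus\ol I$. Throughout put $G=\gr_I(A)$, $\ol G=G/fG$, $J=G_+$, and let $\ol J=\ol G_+$ be the image of $J$ in $\ol G$; note $f\in G_1\subseteq J$, so $fG\subseteq J$, and $G/J\cong\ol G/\ol J\cong A/I$ is concentrated in degree $0$. Since $f$ is homogeneous of positive degree it annihilates $G/J$, and since $f$ is $G$-regular it is $J$-regular as well. The target isomorphism is the graded analogue of Lemma~\ref{3.4}, so I would proceed in the same two moves: reduce modulo $f$ along a minimal graded free resolution, and prove a graded version of Vasconcelos's splitting.

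\emph{Step 1: reduction mod $f$.} Let $\mathbb F_\bullet\colon\cdots\to F_1\overset{\partial_1}{\to}F_0=G\overset{\varepsilon}{\to}G/J\to0$ be a minimal graded free resolution over $G$. Truncating it yields a minimal graded free resolution $\cdots\to F_2\to F_1\to J\to0$ of $J$ whose $(i-1)$-st syzygy is $Z_i$. Because $f$ is $G$-regular, $\ol G$ has the graded free resolution $0\to G(-1)\overset{f}{\to}G\to\ol G\to0$, so $\Tor^G_j(J,\ol G)=0$ for $j\ge2$, while $\Tor^G_1(J,\ol G)=(0:_J f)=0$. Hence $-\otimes_G\ol G$ carries the minimal resolution of $J$ to a minimal graded free resolution of $J/fJ$ over $\ol G$ (minimality survives because the differentials of $\mathbb F_\bullet$ have entries in $\calM$ and $f\in\calM$), and therefore
$$Z_i/fZ_i\;\cong\;\Syz_{\ol G}^{\,i-1}(J/fJ)\qquad(i\ge1).$$

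\emph{Step 2: the graded splitting.} I claim $J/fJ\cong(\ol G/\ol J)(-1)\oplus\ol J$ as graded $\ol G$-modules. Multiplication by $f$ gives graded isomorphisms $G(-1)\cong fG$ and $J(-1)\cong fJ$ (here $G$- and $J$-regularity of $f$ enter), hence $fG/fJ\cong(G/J)(-1)$, and since $fG\subseteq J$ and $J/fG=\ol J$ we obtain a short exact sequence of graded $\ol G$-modules
$$0\to(\ol G/\ol J)(-1)\overset{\varphi}{\to}J/fJ\to\ol J\to0$$
with $\varphi$ induced by multiplication by $f$. Now $J$ is minimally generated over $G$ in degree $1$ by $G_1=I/I^2$, which is $A/I$-free by hypothesis, and the class of $f$ in $(J/fJ)_1=I/I^2$ is a unimodular element of this free $A/I$-module since $a\in Q\setminus\fkm Q$ is part of a minimal generating set of $I$. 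Projecting $I/I^2$ onto the rank-one free summand spanned by that class, and extending by $0$ in all other degrees, defines a retraction $J/fJ\to(\ol G/\ol J)(-1)$ of $\varphi$: it is $\ol G$-linear automatically, because the target lives in a single degree and $\ol G_{>0}$ kills it for degree reasons. This is precisely the associated-graded incarnation of the Claim in the proof of Lemma~\ref{3.4}, and checking it cleanly — in particular the well-definedness and $\ol G$-linearity of the retraction — is the step I expect to be the main obstacle.

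\emph{Step 3: conclusion.} A minimal graded free resolution of $\ol G/\ol J$ over $\ol G$ has $\ol G$ in homological degree $0$ and kernel $\ol J=\ol G_+$ contained in the graded maximal ideal, so $\ol J\cong\Syz^1_{\ol G}(\ol G/\ol J)$ and hence $\Syz^{\,i-1}_{\ol G}(\ol J)\cong\Syz^{\,i}_{\ol G}(\ol G/\ol J)$; also taking syzygies commutes with the twist $(-1)$. Applying $\Syz^{\,i-1}_{\ol G}(-)$ to the splitting of Step~2 and feeding the result into the isomorphism of Step~1 yields
$$Z_i/fZ_i\;\cong\;\Syz^{\,i-1}_{\ol G}(\ol G/\ol J)(-1)\oplus\Syz^{\,i}_{\ol G}(\ol G/\ol J)\qquad(i\ge1)$$
(the case $i=1$ being the splitting itself), which is the assertion, since $\ol G=\gr_I(A)/f{\cdot}\gr_I(A)$ and $\ol G/\ol J=\gr_I(A)/J$.
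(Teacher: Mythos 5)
Your argument is correct and is essentially the paper's own proof: the paper reduces to the graded analogue of the Vasconcelos splitting (Claim in the proof of Lemma~\ref{3.4}), namely $J/fJ\cong(\gr_I(A)/J)(-1)\oplus J/f{\cdot}\gr_I(A)$, and then concludes --- this is precisely your Step~2 combined with Steps~1 and~3. The one place worth tightening is the unimodularity of the class $\overline{a}$ in the free $A/I$-module $I/I^2$: this does not follow merely from $a\in Q\setminus\fkm Q$, but from the hypothesis that $f$ is $\gr_I(A)$-regular, which forces $\Ann_{A/I}(\overline{a})=0$ and hence, by a socle argument over the Artinian local ring $A/I$, that $\overline{a}\notin\fkm{\cdot}(I/I^2)$, so that $\overline{a}$ really is part of a free basis.
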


\begin{proof}
Since $J/J^2 \cong (I/I^2)t$, $I/I^2$ is $A/I$-free, and $\gr_I(A)/J \cong A/I$, $J/J^2$ forms a finitely generated $\gr_I(A)/J$-free module.
Thus, by using the same technique as the proof of Claim in Lemma \ref{3.4}, we can prove $J/fJ \cong (\gr_I(A)/J)(-1)\oplus J/f{\cdot}\gr_I(A)$
as graded $\gr_I(A)$-modules, and whence we get the required assertion.
\end{proof}

\begin{proof}[Proof of Corollary $\ref{9.9}$]
We notice that we have $\gr_I(A/I) \cong \gr_I(A)/J$.
Let $$ \mathbb{L}_{\bullet}: \cdots \to L_i \to L_{i-1} \to \cdots \to L_1 \to L_0 \to \gr_I(A/I) \to 0 $$
be a minimal free resolution of the graded $\gr_I(A)$-module $\gr_I(A/I)$.
Put $Z_i=\Syz_{\gr_I(A)}^i(\gr_I(A/I))$ for $i \geq 0$ and $n=\mu_A(I)$.
Thanks to Theorem \ref{9.8}, we have only to show that $L_i \cong \bigoplus^{r_i}\gr_I(A)(-i)$ holds true for all $i \geq 0$, where $r_i=\rank_{\gr_I(A)}L_i$ denotes the $i$-th Betti number of $\gr_I(A/I)$.
We proceed by induction on $d$.
Suppose that $d=0$, then since $I^2=(0)$ and $I \cong (A/I)^n$ we have 
$J=It \cong (A/I)^nt \cong (\gr_I(A)/J)^n(-1).$
Therefore we get $Z_i \cong (\gr_I(A)/J)^{n^i}(-i)$ for all $i \geq 1$, inductively.
Hence $L_i=\bigoplus^{n_i}\gr_I(A)(-i)$ holds true for all $i \geq 1$.
Assume that $d>0$ and that our assertion holds true for $d-1$.
Let $a \in Q \backslash \m Q$ and put $f=at \in R$, $\ol{A}=A/(a)$, and $\ol{I}=I/(a)$.
Then $\ol{I}$ is an Ulrich ideal of $\overline{A}$ by Lemma \ref{3.3}.
Since $I^2=QI$ holds true, $f$ is a $\gr_I(A)$-regular element so that we have $\gr_I(A)/f{\cdot}\gr_I(A) \cong \gr_{\ol{I}}(\ol{A})$.
Hence, by Lemma \ref{9.10}, we have an isomorphism
$$ Z_i/fZ_i \cong \Syz_{\gr_{\ol{I}}(\ol{A})}^{i-1}(\gr_I(A)/J)(-1) \oplus \Syz_{\gr_{\ol{I}}(\ol{A})}^i(\gr_I(A)/J) $$
of graded $\gr_I(A)$-modules for all $i \geq 1$.
Then, by the hypothesis of induction on $d$, we have $\Syz_{\gr_{\ol{I}}(\ol{A})}^i(\gr_I(A)/J)=\gr_I(A){\cdot}[\Syz_{\gr_{\ol{I}}(\ol{A})}^i(\gr_I(A)/J)]_i$, whence $L_i \cong \bigoplus^{r_i}\gr_I(A)(-i)$ for all $i \geq 1$.
This completes the proof of Corollary \ref{9.9}.
\end{proof}

In my proof of Theorem \ref{9.5}, we need the following lemma.

\begin{lem}\label{9.11}
Let $I$ be an $\m$-primary ideal of $A$ and $Q=(a_1,a_2,\cdots,a_d)$ be a parameter ideal of $A$ which forms a reduction of $I$.
Put $f_i=a_it \in R$ for $1 \leq i \leq d$.
Suppose that $M$ is an Ulrich $A$-module with respect to $I$.
Then the following assertions hold true.
\begin{itemize}
\item[(1)] $I^nM=Q^nM$ for all $n \in \Z$,
\item[(2)] Suppose that $d=0$. Then $J{\cdot}{\gr}_I(M)=(0)$, $\gr_I(M)$ is $\gr_I(A)/J$-free, and ${\rank}_{\gr_I(A)/J}(\gr_I(M))=\mu_A(M)$.
\item[(3)] Suppose that $d>0$.
Then $f_1,f_2,\cdots,f_d$ forms a $\gr_I(M)$-regular sequence, $J{\cdot}\gr_I(M)=(f_1,f_2,\cdots,f_d)\gr_I(M)$, $\gr_I(M)/J{\cdot}\gr_I(M)$ is $\gr_I(A)/J$-free, and ${\rank}_{\gr_I(A)/J}(\gr_I(M)/J{\cdot}\gr_I(M))=\mu_A(M)$.
\end{itemize}
\end{lem}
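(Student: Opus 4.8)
The plan is to establish (1) first, by a one-line induction, and then observe that (1) forces the $I$-adic and the $Q$-adic filtrations of $M$ to coincide, which essentially trivialises (2) and (3).

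For (1): when $n\le 0$ the equality is just $I^nM=M=Q^nM$, and for $n\ge 1$ we induct. Since $M$ is Ulrich with respect to $I$ we have $IM=QM$, so $I^nM=I^{n-1}(IM)=I^{n-1}(QM)=Q(I^{n-1}M)=Q(Q^{n-1}M)=Q^nM$ by the inductive hypothesis. In particular $I^nM/I^{n+1}M=Q^nM/Q^{n+1}M$ for every $n$, so $\gr_I(M)$ and $\gr_Q(M)$ share the same underlying graded group; moreover multiplication on $\gr_I(M)$ by $f_i$ (i.e.\ by the image of $a_it$ in $\gr_I(A)$) carries the class of $m\in I^nM$ to the class of $a_im\in I^{n+1}M/I^{n+2}M$, which is exactly how the leading form $T_i$ of $a_i$ acts on $\gr_Q(M)$. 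Hence $\gr_I(M)$ and $\gr_Q(M)$ are identified even as modules over the polynomial ring $S:=(A/Q)[T_1,\dots,T_d]\cong\gr_Q(A)$, where $S$ acts on $\gr_I(M)$ through $A/Q\twoheadrightarrow A/I=[\gr_I(A)]_0$ together with $T_i\mapsto f_i$.

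For (2), where $d=0$: then $Q=(0)$, so $IM=QM=0$ and thus $I^nM=0$ for all $n\ge1$ by (1). Hence $\gr_I(M)=M/IM$ is concentrated in degree $0$, giving $J\cdot\gr_I(M)=0$; and $M/IM$ is $A/I$-free, i.e.\ $\gr_I(A)/J$-free, of rank $\mu_A(M)$ by the Ulrich condition together with $\mu_A(M)=\mu_{A/I}(M/IM)$. For (3), where $d>0$: since $M$ is maximal Cohen--Macaulay and $a_1,\dots,a_d$ is a system of parameters, it is an $M$-regular sequence, so by the standard description of the associated graded module of an ideal generated by an $M$-regular sequence, $\gr_Q(M)\cong(M/QM)\otimes_{A/Q}S$ is a \emph{free} $S$-module. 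The variables of a polynomial ring form a regular sequence on any free module, so $T_1,\dots,T_d$ is a $\gr_Q(M)$-regular sequence, and by the identification above $f_1,\dots,f_d$ is a $\gr_I(M)$-regular sequence, with $\gr_I(M)/(f_1,\dots,f_d)\gr_I(M)=\gr_Q(M)/(T_1,\dots,T_d)\gr_Q(M)=M/QM=M/IM$, which is $\gr_I(A)/J$-free of rank $\mu_A(M)$ as in (2). Finally $J\cdot\gr_I(M)=(f_1,\dots,f_d)\gr_I(M)$ because both sides equal $\bigoplus_{n\ge1}[\gr_I(M)]_n$: for each $n\ge0$ the relation $Q^{n+1}M=(a_1,\dots,a_d)Q^nM$ yields $[\gr_I(M)]_{n+1}=Q^{n+1}M/Q^{n+2}M=\sum_{i=1}^d f_i\cdot[\gr_I(M)]_n$, while the reverse inclusions $(f_1,\dots,f_d)\gr_I(M)\subseteq J\cdot\gr_I(M)\subseteq\bigoplus_{n\ge1}[\gr_I(M)]_n$ are immediate.

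The only genuine input is the module version of ``the associated graded module of an ideal generated by a regular sequence is free over a polynomial ring'', which I would cite from the literature (or reduce to the ring case, which is contained in the graded-ring references already used in the paper). Everything else follows formally once (1) collapses the two filtrations, so I do not anticipate a real obstacle.
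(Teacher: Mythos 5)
Your parts (1) and (2) match the paper, and your derivation of $J\cdot\gr_I(M)=(f_1,\dots,f_d)\gr_I(M)$ from (1) is also what the paper does. The genuine difference is in showing that $f_1,\dots,f_d$ is $\gr_I(M)$-regular: the paper invokes the Valabrega--Valla criterion, reducing the regularity of the leading forms to the equality $QM\cap I^{n+1}M=Q^{n+1}M$ supplied by (1), whereas you use (1) to identify the $I$-adic and $Q$-adic filtrations of $M$ outright, identify $\gr_I(M)$ with $\gr_Q(M)$ as a module over $\gr_Q(A)\cong(A/Q)[T_1,\dots,T_d]$ via $T_i\mapsto f_i$, and then read off the regularity of $T_1,\dots,T_d$ from the structure of $\gr_Q(M)$ when $a_1,\dots,a_d$ is $M$-regular. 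That identification is correct (both filtrations agree, both $f_i$ and $T_i$ act as the class of $a_i$, and the degree-zero actions agree because $A/Q\twoheadrightarrow A/I$ is compatible with $M/QM=M/IM$), and this route is a bit more self-contained than citing Valabrega--Valla, which is worth something.

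There is one flaw you should repair. You assert that $\gr_Q(M)\cong(M/QM)\otimes_{A/Q}S$ is a \emph{free} $S$-module. It is not, in general: $(M/QM)\otimes_{A/Q}S$ is $S$-free precisely when $M/QM$ is $A/Q$-free, but the Ulrich hypothesis only gives that $M/QM=M/IM$ is $A/I$-free, and $A/I$ is a proper quotient of $A/Q$ once $I\supsetneq Q$. However, the conclusion you actually need survives: for \emph{any} nonzero $A/Q$-module $N$, the variables $T_1,\dots,T_d$ form a regular sequence on the polynomial module $N[T_1,\dots,T_d]=N\otimes_{A/Q}S$, since modding out $T_1,\dots,T_i$ leaves $N[T_{i+1},\dots,T_d]$ and $T_{i+1}$ is visibly a nonzerodivisor there. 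With $N=M/QM$ this gives the regular sequence without any freeness claim, and the quotient $\gr_Q(M)/(T_1,\dots,T_d)\gr_Q(M)=M/QM=M/IM$ is then $A/I$-free of rank $\mu_A(M)$ exactly as you say. So the argument is correct once that sentence is corrected.
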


\begin{proof}
(1)
We can prove the assertion by induction on $n$.

(2)
Because $I^nM=(0)$ for all $n >0$ by assertion (1), we have $J{\cdot}{\gr}_I(M)=(0)$.
Since $M \cong (A/I)^{\mu_A(M)}$ and $\gr_I(A)/J \cong A/I$, we have
$${\gr}_I(M)=[{\gr}_I(M)]_0 \cong M \cong (\gr_I(A)/J)^{\mu_A(M)}.$$
Therefore $\gr_I(M)$ is $\gr_I(A)/J$-free with $\rank_{\gr_I(A)/J}(\gr_I(M))=\mu_A(M)$.

(3)
Thanks to Valabrega-Valla's criterion (\cite{VV}, \cite[Theorem 1.1]{RV}), $f_1, f_2,\cdots, f_d$ forms a ${\gr}_I(M)$-regular sequence because $QM \cap I^{n+1}M=Q^{n+1}M$ holds true for all $n \geq 0$ by assertion (1).
It is easy to see that $J{\cdot}\gr_I(M)=(f_1,f_2,\cdots,f_d)\gr_I(M)$ holds true by assertion (1).
Because $M/IM \cong (A/I)^{\mu_A(M)}$ and $\gr_I(A)/J \cong A/I$, we have
$${\gr}_I(M)/J{\cdot}{\gr}_I(M) = [{\gr}_I(M)/J{\cdot}{\gr}_I(M)]_0 \cong M/IM \cong (\gr_I(A)/J)^{\mu_A(M)}.$$
Therefore $\gr_I(M)/J{\cdot}\gr_I(M)$ is $\gr_I(A)/J$-free with $\rank_{\gr_I(A)/J}(\gr_I(M)/J{\cdot}\gr_I(M))=\mu_A(M)$.
\end{proof}

Let us now give a proof of Theorem \ref{9.5}.

\begin{proof}[Proof of Theorem \ref{9.5}]
(1) and (2)
Let
\[
\mathbb{L}_{\bullet}:
 \cdots \to L_i \to L_{i-1} \to \cdots \to L_1 \to L_0 \to \gr_I(M) \to 0 
\]
denote a minimal free resolution of the graded $\gr_I(A)$-module $\gr_I(M)$ and 
put $Z_i=\Syz_{\gr_I(A)}^i(\gr_I(M))$ for all $i \geq 0$.
Thanks to Theorem \ref{9.8},
 we have only to show that $L_i \cong \bigoplus^{r_i}\gr_I(A)(-i)$ for all 
$i \geq 0$ where $r_i=\rank_{\gr_I(A)}L_i$.
We proceed by induction on $d$.

When $d=0$, we have $\gr_I(M) \cong (\gr_I(A)/J)^{\mu_A(M)}$ by Lemma \ref{9.11} (2).
Look at the exact sequence
$$ 0\to \bigoplus^{\mu_A(M)}J \to \gr_I(A)^{\mu_A(M)} \to (\gr_I(A)/J)^{\mu_A(M)} \to 0 $$
of $\gr_I(A)$-modules.
Since $J \cong (\gr_I(A)/J)^{\mu_A(I)}(-1)$ as in the proof of Corollary \ref{9.9}, we get $Z_i \cong (\gr_I(A)/J)^{\mu_A(M){\cdot}\mu_A(I)^i}(-i)$ for all $i \geq 1$, inductively.
Therefore we get $L_i \cong \bigoplus^{\mu_A(M){\cdot}\mu_A(I)^i}\gr_I(A)(-i)$ as required.

Assume that $d>0$ and that our assertion holds true for $d-1$.
Let $a \in Q \backslash \m Q$ and $f=at \in R$.
We put $\ol{A}=A/(a)$, $\ol{I}=I/(a)$, and $\ol{M}=M/aM$.
Then $\ol{I}$ is an Ulrich ideal of $\ol{A}$ by Lemma \ref{3.3}, and $\ol{M}$ is an Ulrich $\ol{A}$-module with respect to $\ol{I}$.
Then $f$ is $\gr_I(M)$-regular by Lemma \ref{9.11} (3) and hence we have $\gr_I(M)/f{\cdot}\gr_I(M) \cong \gr_{\ol{I}}(\ol{M})$.
The element $f$ is also $\gr_I(A)$-regular because $I^2=QI$ holds true.
Then we get an exact sequence
$$ \cdots \to L_i/fL_i \to L_{i-1}/fL_{i-1} \to \cdots \to L_1/fL_1 \to L_0/fL_0 \to \gr_{\ol{I}}(\ol{M}) \to 0$$
of graded $\gr_{\overline{I}}(\ol{A})$-modules.
Therefore, by the hypothesis of induction on $d$, we have $$Z_i/fZ_i \cong \Syz_{\gr(\ol{I})}^i(\gr_{\ol{I}}(\ol{M}))=\gr_I(A){\cdot}[\Syz_{\gr(\ol{I})}^i(\gr_{\ol{I}}(\ol{M})) ]_i.$$
Thus we get $L_i=\bigoplus^{r_i}\gr_I(A)(-i)$ for all $i \geq 1$.
Consequently our assertions (1) and (2) hold true by Theorem \ref{9.8}.

(3) and (4)
We notice that we have ${\rmI}_1(\partial_i) \subseteq I$ for all $i\geq 1$ and $\rank_AF_i=\rank_{\gr_I(A)}L_i$ for all $i \geq 0$ by Theorem \ref{9.8}.
We proceed by induction on $d$.
Suppose $d=0$.
Then the assertion is obvious by the proof of assertion (1) and (2).
Therefore the induction on $d$ easily shows that ${\rmI}_1(\partial_i)+Q=I$ for all $i \geq 1$ and
$$\rank_{\gr_I(A)}L_i=\rank_{\gr_{\ol{I}}(\ol{A})}{L_i/f L_i}=\mu_{\ol{A}}(\ol{M})\{\mu_{\ol{A}}(\ol{I})-\dim \ol{A}\}^i=\mu_A(M)\{\mu_A(I)-d\}^i$$
for all $i \geq 0$.
This completes the proof of assertions (3) and (4).
\end{proof}

We end this section by constructing an example of an Ulrich module with respect to an Ulrich ideal in the one dimensional case.
The assertion (3) of Example \ref{9.12} follows from Example \ref{7.3} and implies that the equality $\rmI_1(\partial_i)=I$ in Theorem \ref{9.5} (4) does not hold true in general.

\begin{ex}\label{9.12}
Let $A=k[[X,Y]]/(Y^2)$ where $k[[X,Y]]$ is the formal power series ring over a field $k$.
Put $\m=(x,y)$ where $x$ and $y$ denote the images of $X$ and $Y$ in $A$ respectively.
Let $I_n=(x^n,y)$ for $n \geq 1$.
Then $A$ is a Gorenstein local ring with $\dim A=1$ and we have the following.
\begin{itemize}
\item[(1)] $\mu_A(I_n)=2$ and $I_n$ is an Ulrich ideal of $A$ containing a reduction $(x^n)$ for $n \geq 1$.
\item[(2)] $I_n$ is an Ulrich $A$-module with respect to the maximal ideal $\m=I_1$ of $A$ for $n \geq 1$.
\item[(3)] The sequence
$$
\mathbb{F}_{\bullet} : \cdots \to A^2 \mapright{\begin{pmatrix}
-y& 0\\
x^n & y
\end{pmatrix}}
A^2 \mapright{\begin{pmatrix}
-y& 0\\
x^n & y
\end{pmatrix}}A^2 \mapright{\begin{pmatrix}
x^n & y
\end{pmatrix}} A \overset{\varepsilon}{\to} A/I_n \to 0. $$
forms a minimal free resolution of $A$-module $A/I_n$ for $n \geq 1$.
Therefore $\Syz_A^i(A/I_n) \cong I_n$ for all $i \geq 1$.
\end{itemize} 

\end{ex}

\section{Ulrich ideals of one-dimensional Gorenstein local rings of finite CM--representation type}\label{onefcm}

In Section \ref{minfree} we observed that every Cohen--Macaulay local ring of finite CM-representation type admits only finitely many nonparameter Ulrich ideals (Theorem \ref{7.7}).
In this section, we consider giving complete classification of those ideals, and do it for Gorenstein local rings of dimension one under some mild assumptions.
To achieve our purpose, we use techniques from the representation theory of maximal Cohen--Macaulay modules.
Let us begin with recalling several definitions and basic facts stated in Yoshino's book \cite{Y}.

\begin{defn}\cite[(2.8),(3.11) and (13.5)]{Y}\label{8.5}
Let $A$ be a $d$-dimensional Cohen--Macaulay complete local ring.
Suppose that $A$ is an {\em isolated singularity}, that is, the local ring $A_\fkp$ is regular for every nonmaximal prime ideal $\fkp$ of $A$.
Let $M$ be a nonfree indecomposable maximal Cohen--Macaulay $A$-module.
Then we define the {\em Auslander-Reiten translation} of $M$ by:
$$
\tau M=\Hom_A(\Syz_A^d(\operatorname{Tr}M),\rmK_A).
$$
Here $\rmK_A$ denotes the canonical module of $A$.
\end{defn}

\begin{lem}\label{8.6}
With the notation of Definition \ref{8.5}, assume that $A$ is Gorenstein with $d=1$.
Then one has an isomorphism $\tau M\cong\Syz_A^1(M)$.
\end{lem}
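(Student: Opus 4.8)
The plan is to unwind the definition of $\tau$ using that $A$ is Gorenstein, and then trace a minimal free resolution of $M$ through the single dualization built into the Auslander transpose. Since $A$ is Gorenstein we have $\rmK_A\cong A$, so for $d=1$ the defining formula becomes
\[
\tau M=\Hom_A(\Syz_A^1(\operatorname{Tr}M),\rmK_A)\cong\bigl(\Syz_A^1(\operatorname{Tr}M)\bigr)^{\ast}.
\]
Hence the assertion reduces to identifying $\bigl(\Syz_A^1(\operatorname{Tr}M)\bigr)^{\ast}$ with $\Syz_A^1(M)$, which I would do by applying $\Hom_A(-,A)$ twice while keeping all the resolutions minimal.

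First I would fix a minimal free resolution $\cdots\to F_2\overset{\partial_2}{\to}F_1\overset{\partial_1}{\to}F_0\overset{\varepsilon}{\to}M\to0$. As recorded in Section~\ref{dual}, dualizing the presentation $F_1\overset{\partial_1}{\to}F_0\to M\to0$ gives the exact sequence $0\to M^{\ast}\overset{\varepsilon^{\ast}}{\to}F_0^{\ast}\overset{\partial_1^{\ast}}{\to}F_1^{\ast}\to\operatorname{Tr}M\to0$. Because $\partial_1$ is a minimal map, so is $\partial_1^{\ast}$, so $F_1^{\ast}\twoheadrightarrow\operatorname{Tr}M$ is a minimal free cover and $\Syz_A^1(\operatorname{Tr}M)=\operatorname{Im}\partial_1^{\ast}\cong\operatorname{Coker}\varepsilon^{\ast}$. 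Writing $C:=\Syz_A^1(\operatorname{Tr}M)$, this produces a short exact sequence
\[
0\to M^{\ast}\overset{\varepsilon^{\ast}}{\to}F_0^{\ast}\to C\to0.
\]
Since $M$ is nonfree, $\operatorname{Tr}M\ne(0)$ is maximal Cohen--Macaulay (Section~\ref{dual}); hence its first syzygy $C$ over the one-dimensional ring $A$ is again maximal Cohen--Macaulay, and $\operatorname{Ext}_A^1(C,A)\cong\operatorname{Ext}_A^2(\operatorname{Tr}M,A)=0$.

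Next I would apply $\Hom_A(-,A)$ to the displayed short exact sequence. Using $\operatorname{Ext}_A^1(C,A)=0$ this gives an exact sequence $0\to C^{\ast}\to F_0^{\ast\ast}\overset{\varepsilon^{\ast\ast}}{\to}M^{\ast\ast}\to0$. Now $F_0^{\ast\ast}\cong F_0$ canonically, and $M$, being maximal Cohen--Macaulay over the Gorenstein ring $A$, is reflexive, so $M^{\ast\ast}\cong M$ canonically as well (use \cite[Satz 6.1]{HK} together with $\rmK_A\cong A$); under these identifications $\varepsilon^{\ast\ast}$ becomes $\varepsilon$ by naturality of the biduality map. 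Therefore $0\to C^{\ast}\to F_0\overset{\varepsilon}{\to}M\to0$ is exact, so $C^{\ast}\cong\ker\varepsilon=\Syz_A^1(M)$. Combining this with the first paragraph yields $\tau M\cong C^{\ast}\cong\Syz_A^1(M)$, as claimed.

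The routine points — minimality of $\partial_1^{\ast}$, the fact that $C$ is maximal Cohen--Macaulay, the vanishing $\operatorname{Ext}_A^1(C,A)=0$, and the naturality identification of $\varepsilon^{\ast\ast}$ with $\varepsilon$ — are all standard and I would dispatch them quickly. The one point that genuinely needs care is the compatibility of the two dualizations: that the cokernel which appears as $\Syz_A^1(\operatorname{Tr}M)$, once dualized back, recovers exactly $\ker\varepsilon$ for the original minimal resolution rather than merely a module stably isomorphic to it. This is precisely why I would insist on working with minimal resolutions throughout and on the reflexivity of $M$, which together eliminate any ambiguity up to free summands.
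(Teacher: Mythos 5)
Your proof is correct and follows essentially the same route as the paper: the paper appeals to the existence of a minimal complete resolution of $M$ and reads off $(\Syz_A^1(\operatorname{Tr}M))^\ast\cong\operatorname{Im}\partial_1=\Syz_A^1(M)$ in one line, while you construct by hand exactly the portion of that totally acyclic complex the argument needs (dualizing a minimal presentation, using $\rmK_A\cong A$, the $\operatorname{Ext}^1$-vanishing for the MCM module $C$, and reflexivity of $M$). The content is the same; your version just spells out the minimality of $\partial_1^{\ast}$ and the naturality of biduality, which the paper's appeal to complete resolutions subsumes.
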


\begin{proof}
Since $M$ is nonfree and indecomposable, there exists an exact sequence
$$
\cdots \xrightarrow{\partial_2} F_1 \xrightarrow{\partial_1} F_0 \xrightarrow{\partial_0} F_{-1} \xrightarrow{\partial_{-1}} \cdots
$$
of finitely generated free $A$-modules whose $A$-dual is also exact such that $\Im\,\partial_i\subseteq\fkm F_{i-1}$ for all integers $i$ and $\Im\,\partial_0=M$.
We see from this exact sequence that $\tau M=(\Syz_A^1(\operatorname{Tr}M))^\ast\cong(\Im(\partial_1^\ast))^\ast\cong\Im\,\partial_1=\Syz_A^1(M)$.
\end{proof}

Let $A$ be a Cohen--Macaulay local ring.
The {\em Auslander-Reiten quiver} $\Gamma_A$ of $A$ is a graph consisting of vertices, arrows and dotted lines.
The vertices are the isomorphism classes of indecomposable maximal Cohen--Macaulay $A$-modules.
For nonfree indecomposable maximal Cohen--Macaulay modules $M$ and $N$, the vertex $[M]$ is connected by a dotted line with the vertex $[N]$ if and only if $M\cong\tau N$ and $N\cong\tau M$.
We refer to \cite[(5.2)]{Y} for details.
For a $1$-dimensional hypersurface, the Auslander-Reiten quiver finds all the pairs of maximal Cohen--Macaulay modules one of which is the first syzygy of the other:

\begin{prop}\label{8.7}
Let $A$ be a local hypersurface of dimension one.
Let $M,N$ be nonfree indecomposable maximal Cohen--Macaulay $A$-modules.
Then the following are equivalent.
\begin{enumerate}[\rm(1)]
\item
$M\cong\Syz_A^1(N)$.
\item
$N\cong\Syz_A^1(M)$.
\item
In $\Gamma_A$ the vertices $[M],[N]$ are connected by a dotted line.
\end{enumerate}
\end{prop}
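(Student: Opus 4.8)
### Proof strategy for Proposition 8.7

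The plan is to prove the equivalence of (1), (2), (3) by exploiting the fact that over a one-dimensional hypersurface every module is \emph{periodic of period at most two} in the minimal free resolution, which makes the syzygy operator $\Syz_A^1$ an involution (up to isomorphism) on the set of nonfree indecomposable maximal Cohen--Macaulay modules, once we remove free summands. Concretely, since $A$ is a hypersurface $S/(f)$ with $S$ regular local, a nonfree maximal Cohen--Macaulay $A$-module $M$ has a matrix factorization $(\varphi,\psi)$ of $f$, and its minimal free resolution is
$$
\cdots \xrightarrow{\psi} F \xrightarrow{\varphi} F \xrightarrow{\psi} F \xrightarrow{\varphi} F \to M \to 0,
$$
so that $\Syz_A^1(\Syz_A^1(M))\cong M$. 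This immediately gives the equivalence (1)$\Leftrightarrow$(2): if $M\cong\Syz_A^1(N)$ then $\Syz_A^1(M)\cong\Syz_A^1(\Syz_A^1(N))\cong N$, and symmetrically. (One must check that $\Syz_A^1$ preserves indecomposability and nonfreeness here; that follows because over the hypersurface a matrix factorization is reduced exactly when $M$ has no free summand, and $M$ indecomposable forces the matrix factorization, hence its "transpose" $(\psi,\varphi)$, to be indecomposable.)

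Next I would bring in the Auslander--Reiten translation. By Lemma \ref{8.6}, for a $1$-dimensional Gorenstein (in particular hypersurface) isolated singularity one has $\tau M\cong\Syz_A^1(M)$ for every nonfree indecomposable maximal Cohen--Macaulay module $M$. Combined with the period-two phenomenon above, $\tau$ is its own inverse on nonfree indecomposables: $\tau(\tau M)\cong M$. Therefore the defining condition for a dotted line in $\Gamma_A$ — namely $M\cong\tau N$ \emph{and} $N\cong\tau M$ — is redundant: either one of the two isomorphisms already implies the other. Hence $[M]$ and $[N]$ are joined by a dotted line if and only if $N\cong\tau M$, i.e.\ (by Lemma \ref{8.6}) if and only if $N\cong\Syz_A^1(M)$, which is (2); and equivalently $M\cong\tau N\cong\Syz_A^1(N)$, which is (1). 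This closes the loop (1)$\Leftrightarrow$(2)$\Leftrightarrow$(3).

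One point that needs care is the hypothesis "$A$ is an isolated singularity" implicit in Definition \ref{8.5} and Lemma \ref{8.6}: a one-dimensional reduced local ring (and a hypersurface domain, or more generally a hypersurface that is generically a field) is automatically an isolated singularity, since $A_\fkp$ is a zero-dimensional regular local ring, i.e.\ a field, for every nonmaximal prime $\fkp$. I would state this reduction explicitly at the start so that Lemma \ref{8.6} applies. I would also note that if $A$ itself is regular there are no nonfree indecomposable maximal Cohen--Macaulay modules and the statement is vacuous, so we may assume $A$ is a genuine hypersurface singularity and invoke the matrix factorization theory of Eisenbud.

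The main obstacle I anticipate is not any single deep step but making the "involution" argument airtight: specifically, verifying that $\Syz_A^1$ sends the isomorphism class of a \emph{nonfree indecomposable} maximal Cohen--Macaulay module to another such class (no free summand can appear in $\Syz_A^1(M)$ because the resolution is minimal, and indecomposability is preserved because the matrix factorization of $M$ is indecomposable as a pair and swapping the two maps of a matrix factorization is an equivalence of the corresponding categories). Once that bookkeeping is done, the equivalences (1)$\Leftrightarrow$(2) and the collapse of the dotted-line condition to a single isomorphism are both formal, and the proof is short.
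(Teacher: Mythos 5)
Your proposal follows essentially the same route as the paper's proof: the two key inputs are the period-two property $M\cong\Syz_A^2(M)$ for nonfree indecomposable maximal Cohen--Macaulay modules over a hypersurface (the paper cites Yoshino (7.2), you derive it from matrix factorizations) and Lemma~\ref{8.6} identifying $\tau$ with $\Syz_A^1$, after which the equivalence is formal. Your extra care about preserving nonfree indecomposability under $\Syz_A^1$ and about the isolated-singularity hypothesis needed for the Auslander--Reiten quiver to be defined is sound (though note that not every one-dimensional hypersurface is an isolated singularity --- e.g.\ $k[[x,y]]/(y^2)$ --- so one really must assume this, as the intended application to finite CM-representation type guarantees).
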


\begin{proof}
Since $A$ is a hypersurface and $M,N$ are nonfree indecomposable, we have $M\cong\Syz_A^2(M)$ and $N\cong\Syz_A^2(N)$ (cf. \cite[(7.2)]{Y}).
By Lemma \ref{8.6}, we obtain the equivalence.
\end{proof}

Throughout the rest of this section, let $A$ be a $1$-dimensional Gorenstein local ring.
We denote by $\calC_A$ the set of nonisomorphic maximal Cohen--Macaulay $A$-modules $M$ without nonzero free summand such that $\Syz_A^1(M)\cong M$ and $\mu_A(M)=2$.
The following statement relates the notion of Ulrich ideals with the representation theory of maximal Cohen--Macaulay modules.

\begin{prop}\label{8.8}
Let $A$ be a $1$-dimensional Gorenstein local ring.
Then one has the inclusion $\calX_A\subseteq\calC_A$.
\end{prop}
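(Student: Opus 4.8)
The plan is to show that for each $I\in\calX_A$ the $A$-module $I$ itself lies in $\calC_A$; that is, $I$ is a maximal Cohen--Macaulay $A$-module with $\mu_A(I)=2$, with $\Syz_A^1(I)\cong I$, and with no nonzero free direct summand. Three of these four requirements are immediate from what is already available. Since $A$ is Gorenstein and $I$ is an Ulrich ideal that is not a parameter ideal, Corollary \ref{2.6} gives $\mu_A(I)=d+1=2$. Since $I$ is $\fkm$-primary it contains a parameter element of $A$, which is a nonzerodivisor, so $\depth_A I\ge 1=\dim A$ and $I$ is maximal Cohen--Macaulay. And applying Corollary \ref{7.4}(1) with $d=1$ and $n=\mu_A(I)=2$ (or simply reading it off the periodic minimal free resolution exhibited in Example \ref{7.3}) gives $\Syz_A^1(I)=\Syz_A^2(A/I)\cong\bigl[\Syz_A^1(A/I)\bigr]^{\,n-d}=I$.

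The only point needing a genuine argument is that $I$ has no nonzero free direct summand, and this is the main obstacle. Suppose $I\cong A^r\oplus N$ with $r\ge 1$. The idea is to localize at the minimal primes of $A$. Because $I$ is $\fkm$-primary and $\dim A=1$, for every $\fkp\in\Min A$ we have $\fkp\neq\fkm$ and hence $I_\fkp=A_\fkp$; moreover $A_\fkp$ is an Artinian local ring. Comparing lengths in the isomorphism $A_\fkp^{\,r}\oplus N_\fkp\cong A_\fkp$ forces $r=1$ and $N_\fkp=0$ for every $\fkp\in\Min A$. As $\Spec A=\Min A\cup\{\fkm\}$, this yields $\Supp_A N\subseteq\{\fkm\}$, so $N$ has finite length; but $N$ is a direct summand of the maximal Cohen--Macaulay module $I$, so $\depth_A N\ge 1$ unless $N=0$. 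Hence $N=0$ and $I\cong A$, contradicting $\mu_A(I)=2$. Therefore $I$ has no nonzero free summand and $I\in\calC_A$, which proves $\calX_A\subseteq\calC_A$.

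Finally, it is worth recording that this inclusion is injective on isomorphism classes of modules: if $I,J\in\calX_A$ with $I\cong J$ as $A$-modules, then $\Syz_A^1(A/I)=I\cong J=\Syz_A^1(A/J)$, so $I=J$ by Corollary \ref{7.6}. Thus identifying an Ulrich ideal with the module it defines genuinely embeds $\calX_A$ into $\calC_A$.
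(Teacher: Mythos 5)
Your proof is correct, and for the three easy requirements ($\mu_A(I)=2$ via Corollary \ref{2.6}, $I$ maximal Cohen--Macaulay since it contains a nonzerodivisor, and $\Syz_A^1(I)\cong I$ via Corollary \ref{7.4}) you follow exactly the paper's route. Where you genuinely diverge is the ``no nonzero free summand'' step. You localize at the minimal primes of $A$, deduce that the complement $N$ is supported only at $\fkm$ hence of finite length, and then invoke the depth of a direct summand of a maximal Cohen--Macaulay module to force $N=0$. The paper's argument is shorter and more elementary: if an ideal $I$ has a nonzero free direct summand, one may write $I=(x)\oplus J$ as an internal direct sum of ideals with $x$ a nonzerodivisor (because the image of a split inclusion $A\hookrightarrow I$ is a principal ideal generated by a nonzerodivisor, with a complementary submodule $J\subseteq A$); then $xJ\subseteq(x)\cap J=(0)$ forces $J=(0)$, so $I=(x)$ is principal, contradicting $\mu_A(I)=2$. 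The paper's observation that an ideal with a nonzero free summand must be principal holds in any commutative ring and uses no localization or depth; your argument uses the one-dimensional hypothesis ($\Spec A=\Min A\cup\{\fkm\}$, $A_\fkp$ Artinian) and the CM machinery, but it is perfectly valid. Your closing remark that the map $\calX_A\to\calC_A$ is injective on isomorphism classes, by Corollary \ref{7.6}, is a correct and worthwhile addition, though not needed for the inclusion itself.
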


\begin{proof}
If an ideal $I$ of $A$ has a nonzero free summand, then we can write $I=(x)\oplus J$ for some nonzerodivisor $x$ and ideal $J$ of $A$.
Since $xJ\subseteq(x)\cap J=(0)$, we have $J=(0)$, and $I=(x)$.
Thus every ideal that is an element of $\X_A$ does not have a nonzero free summand.
The assertion now follows from Corollaries \ref{2.6} and \ref{7.4}.
\end{proof}

Let $A$ be a $1$-dimensional Gorenstein complete equicharacteristic local ring with algebraically closed residue field $k$ of characteristic $0$.
Suppose that $A$ has finite CM-representation type.
Then $A$ is a {\em simple singularity}, namely, one has a ring isomorphism
$$
A\cong k[[x,y]]/(f),
$$
where $f$ is one of the following:
\begin{gather*}
(\rmA_n)\ x^2+y^{n+1} \quad(n\ge1),\qquad(\rmD_n)\ x^2y+y^{n-1} \quad (n\ge4),\\
(\rmE_6)\ x^3+y^4,\qquad(\rmE_7)\ x^3+xy^3,\qquad(\rmE_8)\ x^3+y^5.
\end{gather*}
For the details, see \cite[(8.5), (8.10) and (8.15)]{Y}.
In this case, we can make a complete list of the nonparameter Ulrich ideals.

\begin{thm}\label{8.9}
With the above notation, the set $\X_A$ is equal to:
\begin{enumerate}
\item[$(\rmA_n)$\ ]
$\begin{cases}
\{(x,y),(x,y^2),\dots,(x,y^{\frac{n}{2}})\} & \text{if $n$ is even},\\
\{(x,y),(x,y^2),\dots,(x,y^{\frac{n-1}{2}}),(x,y^{\frac{n+1}{2}})\} & \text{if $n$ is odd}.
\end{cases}$
\item[$(\rmD_n)$\ ]
$\begin{cases}
\{(x^2,y),(x+\sqrt{-1}y^\frac{n-2}{2},y^\frac{n}{2}),(x-\sqrt{-1}y^\frac{n-2}{2},y^\frac{n}{2})\} & \text{if $n$ is even},\\
\{(x^2,y),(x,y^\frac{n-1}{2})\} & \text{if $n$ is odd}.
\end{cases}$
\item[$(\rmE_6)$\ ]
$\{(x,y^2)\}$.
\item[$(\rmE_7)$\ ]
$\{(x,y^3)\}$.
\item[$(\rmE_8)$\ ]
$\emptyset$.
\end{enumerate}
\end{thm}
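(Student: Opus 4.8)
The plan is to combine the inclusion $\calX_A\subseteq\calC_A$ of Proposition~\ref{8.8} with the classification of indecomposable maximal Cohen--Macaulay modules and the Auslander--Reiten quivers of the simple curve singularities recorded in Yoshino's book \cite{Y}. Since $A$ is already known to be $k[[x,y]]/(f)$ with $f$ of type $\rmA_n,\rmD_n,\rmE_6,\rmE_7$ or $\rmE_8$, it suffices to list $\calC_A$ in each case and then decide which of its members are isomorphic to Ulrich ideals. Every $M\in\calC_A$ has rank one (from its presentation by a $2\times2$ matrix factorization $(\phi,\psi)$ of $f$, which must then satisfy $\phi\cong\psi$ and $\det\phi$ a unit times $f$), and it falls into one of two families: the \emph{indecomposable} ones, which by Proposition~\ref{8.7} are exactly the non-free vertices of $\Gamma_A$ carrying a dotted loop and having $\mu_A(M)=2$; and the \emph{decomposable} ones, of the form $A/(g_1)\oplus A/(g_2)$ for a factorization $f=g_1g_2$ into non-units, which occur only in the reducible types. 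Both families are read off \cite{Y} and are explicit and finite.

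For the decomposable family, $M\cong(g_1,g_2)$ as an $A$-module (the sum being direct since $g_1g_2=f=0$), and a direct computation with a generic reduction $Q=(\lambda g_1+\mu g_2)$ shows $(g_1,g_2)^2=Q(g_1,g_2)$ and $Q:(g_1,g_2)=(g_1,g_2)$, so every such $M$ does come from an Ulrich ideal; this produces $(x^2,y)$ and $(x\pm\sqrt{-1}y^{(n-2)/2},y^{n/2})$ in type $\rmD_n$, the extra ideal $(x,y^{(n+1)/2})$ in type $\rmA_n$ with $n$ odd, and $(x,y^3)$ in type $\rmE_7$. For the indecomposable family, I would realize each candidate $M$ as an $\fkm$-primary ideal $I$ (most conveniently in the numerical-semigroup presentation of $A$) and apply Corollary~\ref{2.6}: $I\in\calX_A$ exactly when $I$ is good, i.e. $I^2=QI$ and $Q:I=I$. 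Running this test picks out $(x,y^i)$ for $1\le i\le\lfloor n/2\rfloor$ in type $\rmA_n$, $(x,y^{(n-1)/2})$ in type $\rmD_n$ with $n$ odd, $(x,y^2)$ in type $\rmE_6$, and no further ideals in the remaining cases, in particular none in type $\rmE_8$. Finally, Corollary~\ref{7.6} shows that $I\mapsto[\Syz_A^1(A/I)]=[I]$ is injective on $\calX_A$, so there is no double-counting and the lists above are complete.

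The step I expect to be the main obstacle is the goodness test for the indecomposable candidates, specifically the colon equality $Q:I=I$. The conditions $I^2=QI$ and ``$A/I$ is Gorenstein'' are comparatively cheap and hold for strictly more ideals (for instance $(x,y^3)$ in type $\rmE_6$ satisfies them but is not good), so it is precisely $Q:I=I$ that forces the lists to be as short as stated, and verifying or refuting it in types $\rmE_6,\rmE_7,\rmE_8$ and $\rmD_n$ takes real work. A secondary technical point is the identification of vertices of $\Gamma_A$ with fractional ideals in the reducible types: one must read ``rank one'' as ``faithful'', choose the correct factorization of $f$, and use a generic element of $I$ rather than a monomial generator as the reduction $Q$.
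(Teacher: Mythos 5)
Your overall plan is the same as the paper's: combine Proposition~\ref{8.8} ($\calX_A\subseteq\calC_A$) with Proposition~\ref{8.7} to read $\calC_A$ off the Auslander--Reiten quivers in Yoshino's book, realize each candidate module as an $\fkm$-primary ideal, and then check the Ulrich property via Corollary~\ref{2.6}. This is exactly the architecture of Section~\ref{onefcm}, so the proposal is correct in spirit. Where you genuinely diverge is in the treatment of the decomposable members of $\calC_A$: you observe that a decomposable $M$ with $\mu_A(M)=2$, no free summand and $\Syz_A^1(M)\cong M$ must be of the form $A/(g_1)\oplus A/(g_2)\cong(g_1,g_2)$ for a factorization $f=g_1g_2$ into coprime non-units, and you propose a single generic reduction $Q=(\lambda g_1+\mu g_2)$ that works uniformly. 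This does work (since $g_1g_2=0$, one gets $I^2=(g_1^2,g_2^2)=QI$ for unit $\lambda,\mu$, and an elementary lifting argument in $k[[x,y]]$ using $\gcd(g_1,g_2)=1$ gives $Q:I=I$), and it cleanly subsumes the paper's several ad hoc choices of $Q$ (e.g.\ $(x^2-y)$ in $\rmD_n$, and $((x-\sqrt{-1}y^{(n-2)/2})+y(x+\sqrt{-1}y^{(n-2)/2}))$ in $\rmD_n$ with $n$ even). That unification is a genuine small improvement in exposition, though of course the proposal is otherwise light on the actual case-by-case quiver computations, which are the real content of the paper's proof.

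Two points of calibration. First, your stated ``main obstacle'' --- that $Q:I=I$ might eliminate some candidates --- does not materialize: the paper ultimately proves $\calX_A=\calC_A$ (Corollary~\ref{8.10}), so the goodness test never discards a member of $\calC_A$ once the ideal has been realized correctly. Your example $(x,y^3)$ in $\rmE_6$ illustrates the general failure of goodness, but that ideal was never a candidate in the first place, because $\Syz_A^1((x,y^3))\not\cong(x,y^3)$, i.e.\ $(x,y^3)\notin\calC_A$; the actual filtering happens entirely at the $\calC_A$ stage, not at the goodness stage. The subtle issue hidden here is rather that the realization of $M\in\calC_A$ as an ideal is not unique up to equality (cf.\ Remark~\ref{2.8}), so one must pick the right representative; Corollary~\ref{7.6} then guarantees that a correct representative is unique. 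Second, your suggestion to carry out the indecomposable-family checks ``most conveniently in the numerical-semigroup presentation'' cannot be used across the board: the $\rmD_n$ and $\rmE_7$ rings (and $\rmA_n$ for $n$ odd) are not domains, so there is no numerical semigroup ring here. The paper instead argues directly in $k[[x,y]]/(f)$ with an explicit $Q$, and that is what you would have to do as well.
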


\begin{proof}
Thanks to Proposition \ref{8.8}, the set $\X_A$ is contained in $\calC_A$, so it is essential to calculate $\calC_A$.
It is possible by looking at the Auslander-Reiten quiver $\Gamma_A$ of $A$, which is described in \cite{Y}.
More precisely, by virtue of Proposition \ref{8.7}, all elements of $\calC_A$ are direct sums of modules corresponding to vertices of $\Gamma_A$ connected by dotted lines.
Once we get the description of $\calC_A$, we can find elements of $\calC_A$ belonging to $\X_A$, by making use of Corollary \ref{2.6}.

(1) The case $(\rmA_n)$ with $n$ even:

It follows from \cite[(5.11) and (5.12)]{Y} that
$$
\calC_A=\{(x,y),(x,y^2),\dots,(x,y^\frac{n}{2})\}.
$$
Applying Corollary \ref{2.6} to $I=(x,y^i)$ and $Q=(y^i)$ for $1\le i\le\frac{n}{2}$, we see that $(x,y^i)$ is an Ulrich ideal.
Hence $\calX_A=\calC_A=\{(x,y),(x,y^2),\dots,(x,y^\frac{n}{2})\}$.

(2) The case $(\rmA_n)$ with $n$ odd:

We use the same notation as in \cite[(9.9)]{Y}.
It is seen by \cite[Figure (9.9.1)]{Y} that
$$
\calC_A\subseteq\{M_1,M_2,\dots,M_\frac{n-1}{2},N_+\oplus N_-\}
$$
holds.
For $1\le j\le n+1$, the sequence
$$
\begin{CD}
A^2 @>{\left(\begin{smallmatrix}
x & y^j \\
y^{n+1-j} & -x
\end{smallmatrix}\right)}>> A^2 @>{\text{nat}}>> (x,y^j) @>>> 0
\end{CD}
$$
is exact, which shows that $M_j$ is isomorphic to the ideal $(x,y^j)$ of $A$.
Since $N_+\oplus N_-\cong M_\frac{n+1}{2}$, we have $\calC_A=\{(x,y),(x,y^2),\dots,(x,y^{\frac{n-1}{2}}),(x,y^{\frac{n+1}{2}})\}$.
Applying Corollary \ref{2.6} to $I=(x,y^j)$ and $Q=(y^j)$ yields that $(x,y^j)$ is an Ulrich ideal for $1\le j\le\frac{n+1}{2}$.
Therefore $\calX_A=\calC_A=\{(x,y),(x,y^2),\dots,(x,y^{\frac{n-1}{2}}),(x,y^{\frac{n+1}{2}})\}$.

(3) The case $(\rmD_n)$ with $n$ odd:

We adopt the same notation as in \cite[(9.11)]{Y}, except that we use $A'$ instead of $A$ there.
By \cite[Figure (9.11.5)]{Y} we have the inclusion relation
$$
\calC_A\subseteq\{A'\oplus B,X_1\oplus Y_1,M_1\oplus N_1,X_2\oplus Y_2,\dots,M_\frac{n-3}{2}\oplus N_\frac{n-3}{2},X_\frac{n-1}{2}\}.
$$
Taking into account the minimal number of generators, we observe that $\calC_A=\{A'\oplus B,X_\frac{n-1}{2}\}$.
Since $(0):y=(x^2+y^{n-2})$, $(0):(x^2+y^{n-2})=(y)$ and $(x^2+y^{n-2})\cap (y)=(0)$, we have
$$
A'\oplus B=A/(y)\oplus A/(x^2+y^{n-2})\cong (x^2+y^{n-2})\oplus (y)=(x^2+y^{n-2},y)=(x^2,y).
$$
As $X_\frac{n-1}{2}\cong Y_\frac{n-1}{2}\cong (x,y^\frac{n-1}{2})$, we get $\calC_A=\{(x^2,y),(x,y^\frac{n-1}{2})\}$.
Put $I=(x^2,y)\supseteq Q=(x^2-y)$.
Then $QI=(x^4+y^{n-1},y^2(1+y^{n-3}))=(x^4,y^2)=I^2$, since $1+y^{n-3}\in A$ is a unit as $n\ge4$.
We see that $A/Q$ is Artinian, whence $Q$ is a parameter ideal of $A$.
It is straightforward that $Q:I=I$ holds, and Corollary \ref{2.6} shows that $(x^2,y)$ is an Ulrich ideal.
Also, using Corollary \ref{2.6} for $I:=(x,y^\frac{n-1}{2})\supseteq Q:=(x)$, we observe that $(x,y^\frac{n-1}{2})$ is an Ulrich ideal.
Thus, we obtain $\calX_A=\calC_A=\{(x^2,y),(x,y^\frac{n-1}{2})\}$.

(4) The case $(D_n)$ with $n$ even:

We adopt the same notation as in \cite[(9.12)]{Y}, except that we use $A'$ instead of $A$ there.
It follows from \cite[Figure (9.12.1)]{Y} that
$$
\calC_A\subseteq\{ A'\oplus B,X_1\oplus Y_1,M_1\oplus N_1,X_2\oplus Y_2,\dots,X_\frac{n-2}{2}\oplus Y_\frac{n-2}{2},C_+\oplus D_+,C_-\oplus D_-\}.
$$
Restricting to the modules generated by at most two elements, we have $\calC_A=\{ A'\oplus B,C_+\oplus D_+,C_-\oplus D_-\}$.
Similarly to (3), we get isomorphisms $A'\oplus B \cong(x^2,y)$ and $C_\pm\oplus D_\pm\cong(y^\frac{n}{2},x\mp\sqrt{-1}y^\frac{n-2}{2})$.
Hence $\calC_A=\{(x^2,y),(y^\frac{n}{2},x-\sqrt{-1}y^\frac{n-2}{2}),(y^\frac{n}{2},x+\sqrt{-1}y^\frac{n-2}{2})\}$.
We have $(x^2,y)\in\calX_A$ similarly to (3).

Let us consider the ideal $I=(y^\frac{n}{2},x-\sqrt{-1}y^\frac{n-2}{2})$.
Set $Q=((x-\sqrt{-1}y^\frac{n-2}{2})+y(x+\sqrt{-1}y^\frac{n-2}{2}))$.
To check that $I,Q$ satisfy the assumptions of Corollary \ref{2.6}, we apply the change of variables $x\mapsto\sqrt{-1}x,\ y\mapsto y$ and put $n=2m+2$ with $m\ge1$.
We may assume:
$$
A=k[[x,y]]/(x^2y-y^{2m+1}),\ I=(y^{m+1},x-y^m),\ Q=((x-y^m)+y(x+y^m)).
$$
Note that $xy^{m+1}-y^{2m+1}=\frac{1}{2}(x^2+y^{2m})y-y^{2m+1}=0$ in the residue ring $A/(x-y^m)^2$.
Hence $I^2=(y^{2m+2},(x-y^m)^2)$ and $QI=((x-y^m)y^{m+1}+y^{m+2}(x+y^m),(x-y^m)^2)=(2y^{2m+2},(x-y^m)^2)$, from which $I^2=QI$ follows.
Clearly, $I$ contains $Q$.
We have
\begin{align*}
A/Q & = k[[x,y]]/(x^2y-y^{2m+1},(1+y)x-(1-y)y^m) \\
& = k[[x,y]]/(((1+y)x)^2y-(1+y)^2y^{2m+1},(1+y)x-(1-y)y^m) \\
& = k[[x,y]]/(((1-y)y^m)^2y-(1+y)^2y^{2m+1},x-(1+y)^{-1}(1-y)y^m) \\
& = k[[x,y]]/(-4y^{2m+2},x-(1+y)^{-1}(1-y)y^m) \cong k[[y]]/(y^{2m+2}).
\end{align*}
This especially says that $Q$ is a parameter ideal, and the isomorphism corresponds $I/Q=y(y^m,x)A/Q$ to $y^{m+1}k[[y]]/(y^{2m+2})$.
Hence $(Q:_AI)/Q=(0:_{A/Q}I/Q)=I/Q$, and therefore $Q:I=I$.
Now we can apply Corollary \ref{2.6}, and see that $I$ is an Ulrich ideal.

The change of variables $x\mapsto-x,\ y\mapsto y$ shows that $(y^\frac{n}{2},x+\sqrt{-1}y^\frac{n-2}{2})$ is an Ulrich ideal.
Thus $\calX_A=\calC_A=\{(x^2,y),(y^\frac{n}{2},x-\sqrt{-1}y^\frac{n-2}{2}),(y^\frac{n}{2},x+\sqrt{-1}y^\frac{n-2}{2})\}$.

(5) The case $(\rmE_6)$:

We adopt the same notation as in \cite[(9.13)]{Y}, except that we use $A'$ instead of $A$ there.
By \cite[Figure (9.13.1)]{Y} we have
$$
\calC_A\subseteq\{M_2,X,A'\oplus B,M_1\oplus N_1\}.
$$
We observe that $\mu_A(A'\oplus B)=6$, $\mu_A(M_1\oplus N_1)=\mu_A(X)=4$ and $M_2\cong (x^2,y^2)\cong(x^3,xy^2)=(y^4,xy^2)\cong(x,y^2)$.
Hence $\calC_A=\{ M_2\}=\{ (x,y^2)\}$.
Applying Corollary \ref{2.6} to $I=(x,y^2)$ and $Q=(x)$, we get $\calX_A=\calC_A=\{(x,y^2)\}$.

(6) The case $(\rmE_7)$:

We adopt the same notation as in \cite[(9.14)]{Y}, except that we use $A'$ instead of $A$ there.
According to \cite[Figure (9.14.1)]{Y},
$$
\calC_A\subseteq\{ A'\oplus B,C\oplus D,M_1\oplus N_1,M_2\oplus N_2,X_1\oplus Y_1,X_2\oplus Y_2,X_3\oplus Y_3\}
$$
holds.
We see that $\mu_A(C\oplus D)=\mu_A(M_1\oplus N_1)=\mu_A(M_2\oplus N_2)=4$, $\mu_A(X_1\oplus Y_1)=\mu_A(X_2\oplus Y_2)=6$, $\mu_A(X_3\oplus Y_3)=8$ and $A'\oplus B\cong(x,y^3)$.
Hence $\calC_A=\{(x,y^3)\}$.
Using Corollary \ref{2.6} for $I=(x,y^3)$ and $Q=(x-y^3)$, we get $I\in\X_A$.
Therefore $\calX_A=\calC_A=\{(x,y^3)\}$.

(7) The case $(\rmE_8)$:

We use the same notation as in \cite[(9.15)]{Y}.
By \cite[Figure (9.15.1)]{Y} we have
$$
\calC_A\subseteq\{ A_1\oplus B_1,A_2\oplus B_2,C_1\oplus D_1,C_2\oplus D_2,M_1\oplus N_1,M_2\oplus N_2,X_1\oplus Y_1,X_2\oplus Y_2\}.
$$
We have $\mu_A(M_i\oplus N_i)=4$, $\mu_A(A_i\oplus B_i)=6$ and $\mu_A(C_i\oplus D_i)=8$ for $i=1,2$, and have $\mu_A(X_1\oplus Y_1)=12$ and $\mu_A(X_2\oplus Y_2)=10$.
Consequently, we get $\calX_A=\calC_A=\emptyset$.
\end{proof}

The proof of Theorem \ref{8.9} yields the following result.

\begin{cor}\label{8.10}
Let $A$ be a $1$-dimensional complete equicharacteristic Gorenstein local ring with algebraically closed residue field of characteristic $0$.
If $A$ has finite CM-representation type, then one has $\calX_A=\calC_A$.
\end{cor}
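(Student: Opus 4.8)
The plan is to deduce the corollary directly from the proof of Theorem \ref{8.9}, rather than to argue independently. First I would invoke the classification of one-dimensional Gorenstein complete equicharacteristic local rings of finite CM-representation type with algebraically closed residue field of characteristic zero (\cite[(8.5), (8.10) and (8.15)]{Y}): such a ring is a simple singularity $A\cong k[[x,y]]/(f)$ with $f$ of type $\rmA_n$, $\rmD_n$, $\rmE_6$, $\rmE_7$, or $\rmE_8$. Hence it suffices to check the equality $\calX_A=\calC_A$ in each of these finitely many cases.

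Now the inclusion $\calX_A\subseteq\calC_A$ is exactly Proposition \ref{8.8} and needs nothing further, so the whole content is the reverse inclusion $\calC_A\subseteq\calX_A$. But this reverse inclusion is precisely what was established, case by case, in the proof of Theorem \ref{8.9}: for each type one first computes $\calC_A$ by reading off the Auslander--Reiten quiver $\Gamma_A$ from \cite{Y} (using Proposition \ref{8.7} to detect the modules $M$ with $M\cong\Syz_A^1(M)$ as those lying on dotted-line pairs, and then cutting down to those with $\mu_A(M)=2$ and no nonzero free summand), identifies each surviving vertex with an explicit ideal of $A$, and finally produces for each such ideal $I$ a parameter reduction $Q$ for which $I^2=QI$ and $Q:I=I$, so that Corollary \ref{2.6} places $I$ in $\calX_A$. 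Collecting these verifications yields $\calC_A\subseteq\calX_A$, and therefore $\calX_A=\calC_A$.

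I do not expect any genuine obstacle here, since all the real work is already contained in the proof of Theorem \ref{8.9}; if anything, the only delicate points --- the identification of certain indecomposable modules $M$ with $\mu_A(M)=2$ in types $\rmD_n$ and in the exceptional cases with concrete (near-)monomial ideals, and the explicit changes of variables needed to verify $Q:I=I$ and $I^2=QI$ for those ideals --- have already been carried out there. Thus the corollary follows immediately by collating the conclusions of the seven cases in the proof of Theorem \ref{8.9}.
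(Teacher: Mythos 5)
Your proposal is correct and matches the paper's approach exactly: the paper states Corollary~\ref{8.10} with the single remark that ``the proof of Theorem~\ref{8.9} yields the following result,'' and you have spelled out precisely what that means---the inclusion $\calX_A\subseteq\calC_A$ is Proposition~\ref{8.8}, while the reverse inclusion $\calC_A\subseteq\calX_A$ is verified case by case in the proof of Theorem~\ref{8.9}.
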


\begin{rem}
Without the assumption that $A$ has finite CM-representation type, the equality in Corollary \ref{8.10} does not necessarily hold true even if $A$ is a $1$-dimensional complete intersection (cf. Remark \ref{2.8}).
\end{rem}

\section*{Acknowledgments}
The authors thank the referee for helpful comments.

\end{document}